\documentclass{article}
\usepackage{graphicx} 
\usepackage{biblatex}
\usepackage[margin=1in]{geometry}
\usepackage{amssymb}
\usepackage{amsmath} 
\usepackage{amsthm}
\usepackage{xcolor}
\usepackage{hyperref}
\usepackage{comment}
\usepackage{stackengine}
\usepackage{mathtools}

\usepackage{tikz}
\usepackage{pgfplots}
\pgfplotsset{compat=1.15}
\usepackage{mathrsfs}
\usetikzlibrary{arrows}

\allowdisplaybreaks

\newtheorem{thm}{Theorem}

\newtheorem{lemma}[thm]{Lemma}
\newtheorem{definition}[thm]{Definition}
\newtheorem{prop}[thm]{Proposition}

\newtheorem{remark}[thm]{Remark}
\newtheorem{corollary}[thm]{Corollary}

\mathtoolsset{showonlyrefs}

\title{Spectral convergence of empirical integral operators with discontinuous kernels}
\author{Manuel Dias}
\date{April 2026}

\begin{document}

\maketitle
\begin{abstract}
    We study the spectral behavior as the sample size $n \to +\infty$ of integral operators defined by convolution of a non-negative symmetric kernel k with respect to empirical measures $\mu_n = \frac{1}{n} \sum_{i=1}^n \delta_{X_i}$, where $\{X_i\}_{i=1}^n$ are independent uniform samples from a compact probability metric space $(\mathcal{X},d,\mu)$. Relaxing the usual positivity and continuity assumptions on k, we prove the convergence of these empirical operators to their continuous counterparts, and provide explicit convergence rates.
\end{abstract}
\section{Introduction}

In statistics and computer science, given data points $\{X_1,...,X_n\}$ living on some space $\mathcal{X}$, it is natural to assume that these points are independdently drawn from some probability distribution $\mu$ on $\mathcal{X}$. To understand the structure of the data points it is a common technique to construct a weighted graph by using a kernel function $k: \mathcal{X} \times \mathcal{X} \rightarrow \mathbb{R}$ and to consider the matrices given by
\begin{equation}
\label{eq:basisMatrices}
    K_{n,\mu} = \frac{1}{n}\left(k(X_i, X_j)\right)_{1 \leq i,j\leq n},
    \quad\quad
    (D_{n,\mu})_{i,i} = \frac{1}{n}\sum_{j = 1}^n k(X_i, X_j),
\end{equation}
which we call the similarity matrix and the degree matrix. Many algorithms for clustering, dimension reduction or manifold methods are based on using the eigenvalues and eigenfunctions of matrices that are built from the ones in \eqref{eq:basisMatrices}. These are usually called spectral techniques.

For instance Clustering is a task in statistics and computer science of separating data points $\{X_1,...,X_n\}$ in groups based on some similarity property. Spectral clustering uses the eigenfunctions of Laplace matrices like the unnormalized Graph Laplacian
\begin{equation}
\label{eq:unnormalizedGraphLaplacian}
    D_{n,\mu} - K_{n,\mu}
\end{equation}
or the normalized Graph Laplacians
\begin{equation}
\label{eq:normalizedGraphLaplacian}
    I - D_{n,\mu}^{-\frac{1}{2}}K_n D_{n,\mu}^\frac{1}{2},
\quad\quad
    I - D_{n,\mu}^{-1}K_{n,\mu}.
\end{equation}
to find these groups. Algorithms of that kind can be found in \cite{spectralClusteringAlgorithmAndAnalysis, Kernelspectralclusteringoflargedimensionaldata, vonLuxburg2007, LuxburgSpectralConsistency} for instance. Similarly, spectral data reduction algorithms provides a smaller representation of data $\{X_1,...,X_n\} \subset \mathbb{R}^N$ that lives in a high dimensional Euclidean spaces, by use of the eigenvectors of the Laplace matrix. See for example \cite{BelkinLaplacianEigenmaps, belkinPtWiseConvergence}.

The theoretical justification for these algorithms lies in showing that the spectrum of the finite random operators converges to the spectrum of its continuous integral counterpart as sample size goes to infinity. The first results in this direction were shown in \cite{l2SpectrumConvergence}. They prove that the $\ell^2(\mathbb{N})$ distance between the two ordered spectrum, (since the operators are symmetric compact and bounded the sequence of eigenvalues can be ordered in decreasing order, and this sequence is in $\ell^2(\mathbb{N})$) converges to zero under the assumption that $k$ is symmetric and square integrable. Other similar results about the spectrum of random matrices using related techniques can be found in \cite{belkinLearningWithIntegralOperators}.

In \cite{LuxburgSpectralConsistency} it is proved that eigenvectors of the empirical integral operators converge uniformly to the eigenvectors of the continuous integral operators. This result is obtained by assuming that $k$ is a symmetric continuous kernel that has a positive lower bound and a finite upper bound. The convergence there is not in $L^2(\mathcal{X},\mu)$ but rather in the space of continuous functions $C(\mathcal{X})$. This applies in particular to the kernel $k_t(x,y) = e^{-\|x-y\|^2/4t}$ over compact sets in $\mathbb{R}^N$. The pointwise and spectral properties for this kernel as $t \rightarrow 0$ have been studied in \cite{BelkinEigenmaps, belkinPtWiseConvergence, belkinSingularPtConvergence, susovanDavid}. These are usually related with the Laplace-Beltrami operator of the underlying space when the latter is regular enough to admit such an operator.

Our work aims at generalizing the convergence result of \cite{LuxburgSpectralConsistency}. In fact we will relax the continuity and the positive lower bound of the kernel. In particular we will show a form of the Law of Large Numbers in Theorems \ref{thm:llneigenfunctions1}, \ref{thm:llneigenfunctions2} which show that the eigenfunctions of the empirical operators converge almost surely to the eigenfunctions of the continuous operators. This is done by showing compact convergence, and then applying  \cite[Theorem \ref{thm:funcAnalAbstractSpectralConvergence}]{ChatelinFuncAnalysis1}, which implies some spectral convergence outside the essential spectrum of the operators. We also obtain rates for the convergence of these eigenfunctions in Theorem \ref{thm:quantifiedSpectralConvergence}.

One of the main differences of this work with \cite{LuxburgSpectralConsistency} is the change of domain of the operators. In \cite{LuxburgSpectralConsistency} the authors use the space of continuous functions $\mathcal{C}(\mathcal{X})$, while we use the space of bounded measurable functions $B(\mathcal{X})$ given by \eqref{eq:BanachSpace}.

The interest in the use of discontinuous kernels is natural, since several spectral algorithms may start with some weight $\tilde{k}(x,y)$, and only associate points which are at a given distance $\epsilon > 0$ by defining the new kernel $k(x,y) = \chi_{B_\epsilon(x)}(y) \tilde{k}(x,y)$ (where $\chi_{B_\epsilon(x)}$ is the characteristic function of the ball $B_\epsilon(x)$). This appears for instance in the work by Belkin and Nyogi \cite{BelkinLaplacianEigenmaps} where the Gaussian Graph Laplacian kernel is cut off by the characteristic function of a ball.

    Given a metric measure space $(\mathcal{X},d, \mu)$, under mild conditions on the measure $\mu$, the kernel $k_r(x,y) = \chi_{B_r(x)}(y)$ will satisfy the conditions of our theorem, and our continuous integral operator defined in \eqref{eq:Uoperator} will correspond to the Symmetrized AMV Laplace operator, which is studied in \cite{AKS2, MT2, K, diasAMV}. This convergence result combines well with the results from \cite{diasAMV} which say that if $(\mathcal{X},d) = (M,d_g)$ is a compact Riemannian manifold with boundary, as $r \rightarrow 0$, the spectrum of $\Delta_r$ converges to the spectrum of the Laplace Beltrami operator $C_m \Delta_g$ where $C_m$ is a constant depending on the dimension. Thus if our points $\{X_1,...,X_n\}$ are being uniformly drawn from a manifold then the eigenfunctions from the empirical operator from the graph approximation of the manifold will be close the to the Laplace Beltrami operator eigenfunctions.

\section{Definitions and statement of results}

Let $(\Omega, \mathcal{F}, \mathbb{P})$ be a probability space and $(\mathcal{X}, d, \mu)$ a compact metric measure space. Let $k : \mathcal{X} \times \mathcal{X} \rightarrow [0,\infty[$ be a symmetric positive kernel $k(x,y) = k(y,x)$ and $\left\{X_i : \Omega \rightarrow \mathcal{X}\right\}_{i \in \mathbb{N}}$ a collection of independent random variables distributed uniformly with respect to $\mu$, that is given $A \subset \mathcal{X}$ we have $
    \mathbb{P}(\{X_i \in A\})
=
    \mu(A)
$. In particular $\mu$ is a probability measure since $\mu(\mathcal{X}) = \mathbb{P}(\Omega) = 1$. Given $n \in \mathbb{N}$, let 
\begin{equation}
    \mu_n = \frac{1}{n}\sum_{i=1}^n \delta_{X_i}
\end{equation}
be the empirical measures, and define the functions
\begin{align}
    d_{\mu}(x)
:=&
    \int_{\mathcal{X}}
    k(x,y)
    d\mu(y),\quad\quad\quad\quad\quad\quad\quad\quad
    d_{n,\mu}(x)
:=
    \int_{\mathcal{X}}
    k(x,y)
    d\mu_n(y),\\
\label{eq:definitionOfH}
    h_{\mu}(x,y) 
:=& 
    \frac{1}{2}k(x,y)\left(\frac{1}{d_{\mu}(x)}+\frac{1}{d_{\mu}(y)}\right),
    \quad\quad
    h_{n,\mu}(x,y) 
:= 
    \frac{1}{2}k(x,y)\left(\frac{1}{d_{n,\mu}(x)}+\frac{1}{d_{n,\mu}(y)}\right),\\
    \label{eq:MmuFunction}
    m_\mu(x) 
:=& 
    \int_{\mathcal{X}} h_{\mu}(x,y)d\mu(y),
    \quad\quad\quad\quad\quad\quad\quad
    m_{n,\mu}(x) 
:= 
    \int_{\mathcal{X}} h_{n,\mu}(x,y) d\mu_n(y).
\end{align}
Let $B(\mathcal{X})$ be the Banach space given by
\begin{equation}
\label{eq:BanachSpace}
    B(\mathcal{X}):=
    \{
        f : \mathcal{X} \rightarrow \mathbb{R}
        \text{ measurable such that }
        \sup_{x \in \mathcal{X}}
        \left|
            f(x)
        \right|
        <+\infty
    \},
\end{equation}
with norm given by
\begin{equation}
    \|f\|_{B(\mathcal{X})} = \sup_{x \in \mathcal{X}} |f(x)|.
\end{equation}
This Banach space will be the domain and codomain of the operators considered in this work.

Given a function $g \in B(\mathcal{X})$, consider $M_g :B(\mathcal{X}) \rightarrow B(\mathcal{X})$ to be the multiplication operator 
\begin{equation}
    M_g(f)(x) = g(x)f(x).
\end{equation}

In this work we are interested in the convergence of the empirical operators given by 
\begin{align}
\label{eq:OperatorPnDef}
P_{n, \mu}f(x)
:=&
    \int_{\mathcal{X}}
    k(x,y)
    f(y)
    d\mu_n(y)
=
    \frac{1}{n}
    \sum_{j=1}^n
        k(x,X_i)
        f(X_i),\\
\label{eq:OperatorTnDef}
    T_{n, \mu}f(x)
:=&
\int_{\mathcal{X}}
    h_\mu(x,y)
    f(y)
    d\mu_n(y)
=
    \frac{1}{n}
    \sum_{j=1}^n
    h_\mu(x,X_j)f(X_j),
,\\
\label{eq:THatDef}
    \hat{T}_{n, \mu}f(x)
:=&
\int_{\mathcal{X}}
    h_{n,\mu}(x,y)
    f(y)
    d\mu_n(y)
=
    \frac{1}{n}
    \sum_{j=1}^n
    h_{n,\mu}(x,X_j)
    f(X_j)
,\\
\label{eq:empiricalOperatorUn}
    U_{n, \mu}f(x)
:=&
    \int_\mathcal{X}
        h_{n,\mu}(x,y)
        \left(
            f(x)
        -
            f(y)
        \right)
        d\mu_n(y)=
        M_{m_{n,\mu}}f(x) - \hat{T}_{n, \mu}f(x),\\
\label{eq:empiricalOperatorUn'}
    U_{n, \mu}' :=& I - \hat{T}_{n, \mu},
\end{align}
to the continuous operators given by
\begin{align}
\label{eq:operatorOfKernel}
P_{\mu}f(x)
:=&
    \int_\mathcal{X}
    k(x,y)
    f(y)
    d\mu(y),\\
\label{eq:OperatorTDef}
    T_\mu f(x)
:=&
    \int_{\mathcal{X}}
    \frac{1}{2}k(x,y)
    \left(
        \frac{1}{d_{\mu}(x)}
    +
        \frac{1}{d_{\mu}(y)}
    \right)
    f(y)
    d\mu(y)
=
    \int_{\mathcal{X}}
    h_\mu(x,y)
    f(y)
    d\mu(y),\\
\label{eq:Uoperator}
    U_{\mu}f(x)
:=&
    \int_\mathcal{X}
    \frac{1}{2}
    k(x,y)
    \left(
        \frac{1}{d_{\mu}(x)}
    +
        \frac{1}{d_{\mu}(y)}
    \right)
    \left(
        f(x)
    -
        f(y)
    \right)
    d\mu(y)
=
    M_{m_{\mu}}f(x)
-
    T_\mu f(x),\\
\label{eq:operatorU'}
    U_{\mu}' :=& I - T_\mu.
\end{align}

Throughout the work we assume our kernel $k$ satisfies the following conditions
\begin{equation}
    \label{eq:UpBoundKernel}
    \tag{UB}
    \sup_{x \in \mathcal{X}}\|k(x,\cdot)\|_{B(\mathcal{X})}
    < M < \infty,
\end{equation}
\begin{equation}
\tag{LB}
\label{eq:UpAndLowBound}
0 < a < \inf_{x \in \mathcal{X}} |d_{\mu}(x)|=\inf_{x\in \mathcal{X}}\|k(x,\cdot)\|_{L^1(\mathcal{X})},
\end{equation}
for some $0<a\leq 1$ and $1 \leq M < \infty$. To generalize continuity, given $\delta > 0$ define
\begin{equation}
    k_\delta(x; z)
:=
    \sup_{y \in B_\delta(x)}
        |
        k(x,z)
    -
        k(y,z)
        |
\end{equation}
and assume the kernel also satisfies
\begin{equation}
\tag{C}
\label{eq:maxDifferenceInBall}
    \|
        k_\delta(x; \cdot)
    \|_{L^1(\mathcal{X})}
\leq
    \omega(\delta),
\end{equation}
where $\omega: [0,+\infty] \rightarrow \mathbb{R}$ is a continuous increasing function that satisfies $\lim_{\delta \rightarrow 0}\omega(\delta) = 0$. Note that $k_\delta(x;z)$ is not necessarily symmetric. Conditions \eqref{eq:UpBoundKernel} and \eqref{eq:maxDifferenceInBall} do not depend only on $k$ but rather they are a relation between the kernel $k$ and the measure $\mu$. In fact given a kernel $k:\mathcal{X} \times \mathcal{X} \rightarrow \mathbb{R}$, a continuous incresing function $w(\cdot)$ such that $\lim_{\delta \rightarrow 0}\omega(\delta) = 0$ and a value $a>0$ we can consider the set of measures 
\begin{align} 
\label{eq:classOfMeasures}
    \mathcal{U}_{a, \omega(\cdot), k}
:=
    \Big\{
        \mu \in \mathcal{M}(\mathcal{X}) :
        &\text{ } \mu \text{ is probability measure on } \mathcal{X} \text{ and satisfies equations }
        \eqref{eq:UpAndLowBound}, \eqref{eq:maxDifferenceInBall}\\
        &
        \text{ with constant } a \text{ and function } \omega(\cdot) \text{ respectively with the kernel } k
    \Big\}.
\end{align}

We now present the main results. First we present the Laws of Large Numbers for the convergence of the spectrum, and then the result with rates of convergence. For these results we refer to the definitions of convergence \ref{def:convergenceDefinitions} and of spectral projections \ref{def:spectralProjections}.

\begin{thm}
\label{thm:llneigenfunctions1}
    Let $\lambda \neq 1$ be an eigenvalue of $U_\mu'$ and $V \subset \mathbb{C}$ an open set such that $V \cap \sigma(U_\mu') = \{\lambda\}$. Then:
    \begin{enumerate}
        \item $U_{n, \mu}' \stackunder{$\longrightarrow$}{$\scriptsize{\text{c}}$} U_\mu'$ compactly almost surely.
        \item The eigenvalues in $\sigma(U_{n, \mu}') \cap V$ converge to $\lambda$ almost surely.
        \item If $\textup{Pr}_n$ is the spectral projection of $\sigma(U_{n, \mu}') \cap  V$ and $\textup{Pr}$ is the spectral projection of $\sigma(U_{\mu}) \cap V$, then $\textup{Pr}_n \stackunder{$\longrightarrow$}{$\scriptsize{\text{p}}$} \textup{Pr}$ pointwise almost surely. This convergence is in the uniform topology of functions.
    \end{enumerate}
\end{thm}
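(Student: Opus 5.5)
The plan is to reduce everything to part 1 and then invoke the abstract spectral convergence result of Chatelin (Theorem \ref{thm:funcAnalAbstractSpectralConvergence}). Since $U_{n,\mu}' = I - \hat{T}_{n,\mu}$ and $U_\mu' = I - T_\mu$, compact convergence of $U_{n,\mu}'$ to $U_\mu'$ is equivalent to compact convergence of $\hat T_{n,\mu}$ to $T_\mu$. Moreover $\sigma(U'_\mu)\setminus\{1\}$ consists of isolated eigenvalues of finite multiplicity, since $T_\mu$ turns out to be compact. Parts 2 and 3 therefore follow from part 1 once we check that $\lambda\neq 1$ lies outside the essential spectrum, which is $\{1\}$.

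For part 1 we verify three things almost surely. First, $\hat T_{n,\mu}f \to T_\mu f$ in $B(\mathcal{X})$ for each fixed $f$. Second, $\sup_n\|\hat T_{n,\mu}\|<\infty$. Third, for every bounded sequence $f_n$, the sequence $(\hat T_{n,\mu}-T_\mu)f_n$ is relatively compact. We split
\[
\hat T_{n,\mu}-T_\mu = (\hat T_{n,\mu}-T_{n,\mu}) + (T_{n,\mu}-T_\mu).
\]

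\textbf{First difference.} Its kernel is $\tfrac12 k(x,y)\bigl(\tfrac1{d_{n,\mu}(x)}-\tfrac1{d_\mu(x)}+\tfrac1{d_{n,\mu}(y)}-\tfrac1{d_\mu(y)}\bigr)$. By (UB) and (LB) it suffices to show $\|d_{n,\mu}-d_\mu\|_{B(\mathcal X)}\to0$ a.s. Once $d_{n,\mu}\ge a/2$ for large $n$, this gives $\|\hat T_{n,\mu}-T_{n,\mu}\|\le C\|d_{n,\mu}-d_\mu\|_\infty\to 0$ in operator norm.

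\textbf{Second difference.} The key tool is a uniform law of large numbers (Glivenko--Cantelli) for the class $\{k(x,\cdot):x\in\mathcal X\}$, and similarly for $\{h_\mu(x,\cdot)g:x\in\mathcal X\}$ with fixed bounded $g$. Continuity is replaced by condition (C), as follows.
\begin{itemize}
\item Cover the compact $\mathcal X$ by finitely many balls $B_\delta(x_1),\dots,B_\delta(x_N)$.
\item For $x\in B_\delta(x_l)$, bound $|d_{n,\mu}(x)-d_\mu(x)|$ by
\[
|d_{n,\mu}(x_l)-d_\mu(x_l)| + \int k_\delta(x_l;\cdot)\,d\mu_n + \int k_\delta(x_l;\cdot)\,d\mu .
\]
\item The last term is at most $\omega(\delta)$ by (C).
\item The first two terms converge a.s. by the strong LLN applied to finitely many bounded functions, the second to $\int k_\delta(x_l;\cdot)d\mu\le\omega(\delta)$.
\end{itemize}
This gives $\limsup_n\|d_{n,\mu}-d_\mu\|_\infty\le 2\omega(\delta)$ a.s., and letting $\delta\to0$ along a countable sequence yields uniform convergence. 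A measurability caveat applies: $k_\delta(x_l;\cdot)$ is a supremum over a ball, so I will either assume it is measurable or replace it by a measurable majorant. The same argument, using that $1/d_\mu$ is bounded and satisfies a modulus-type estimate derived from (C), gives pointwise convergence $T_{n,\mu}f\to T_\mu f$ uniformly in $x$.

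\textbf{Compactness condition.} Condition (C) shows that the image of the unit ball under $T_{n,\mu}$ and $T_\mu$ is ``equi-almost-continuous'' up to the error $\omega(\delta)$. Concretely, for $d(x,x')<\delta$ the oscillation of $T_{n,\mu}f$ is at most
\[
\|f\|\Bigl(\int k_\delta(x;\cdot)\,d\mu_n + C\,\omega(\delta)\Bigr),
\]
and the empirical term is controlled uniformly via the finite-cover LLN above. Since functions in $B(\mathcal X)$ need not be continuous, Arzelà--Ascoli does not apply directly. Instead I would use a total-boundedness argument. Fix the finite $\delta$-cover and assign to each function its values at the centres $x_l$; these lie in a bounded set of $\mathbb R^N$. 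Each function is then uniformly within $O(\omega(\delta))$ of a function determined by those finitely many values. This gives $\varepsilon$-nets for $\{(T_{n,\mu}-T_\mu)f_n\}$ for every $\varepsilon$, hence relative compactness.

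\textbf{Main obstacle.} I expect the hardest step to be this compactness argument without continuity, together with making the empirical oscillation bounds uniform in $n$ on a single almost-sure event. Discontinuities mean we must work with the non-symmetric $k_\delta$ and with the class of measures of \eqref{eq:classOfMeasures}. The fallback is to prove the oscillation bound only for $n\ge n_0(\delta,\omega)$ a.s., which suffices for compact convergence.
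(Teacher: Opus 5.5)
Up to one point that the paper's own proof also leaves open (second paragraph), your proposal is correct. It has the same architecture as the paper's proof: almost sure compact convergence of $\hat T_{n,\mu}$ to $T_\mu$, obtained from laws of large numbers on a finite $\delta$-cover combined with condition (C), followed by Theorem \ref{thm:funcAnalAbstractSpectralConvergence} and $\sigma_{\mathrm{ess}}(U'_\mu)=\{1\}$.

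The execution differs in three places.
First, the paper proves its uniform laws quantitatively, with Bernstein's inequality and summation over $n$ (Lemma \ref{lem:boundOnIndivProbTerm}, Proposition \ref{prop:LawOfLargeNumber}, Lemma \ref{lem:differenceOfKernelEstimate}). You apply the strong law at finitely many centres. That is simpler and enough here, although the paper reuses the Bernstein bounds for Theorem \ref{thm:rateOfConvergenceTheorem}.
Second, the paper proves pseudo-equicontinuity of $\hat T_{n,\mu}f_n$ directly, random normalisation $1/d_{n,\mu}$ included (Lemma \ref{lem:equicontinuity}), and then runs a diagonal Arzel\`a--Ascoli extraction (Proposition \ref{prop:collectiveCompactConvergence}). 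You instead split off $\hat T_{n,\mu}-T_{n,\mu}$, which tends to $0$ in operator norm, so your oscillation estimate involves only $h_\mu$ with the deterministic degree $d_\mu$. You then conclude by total boundedness via step functions. For that $\varepsilon$-net you only need $|g(x)-g(x_l)|$ for $x\in B_\delta(x_l)$, so the empirical error is $\int k_\delta(x_l;\cdot)\,d\mu_n$ at the centres only. Uniform control of $\sup_x\int k_\delta(x;\cdot)\,d\mu_n$, which is what Lemmas \ref{lem:boundForDifferenceOfDifferences} and \ref{lem:differenceOfKernelEstimate} provide, is therefore never needed. Since the bound is uniform in $f$, one full-measure event serves all bounded sequences, which settles your ``main obstacle''.
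Third, you prove compact convergence, which is all Theorem \ref{thm:funcAnalAbstractSpectralConvergence} asks for. The paper proves collective compactness because Theorem \ref{thm:quantifiedSpectralConvergence} needs it later. Your estimates give that too, since each $\hat T_{n,\mu}$ has finite rank and $\|\hat T_{n,\mu}-T_{n,\mu}\|\to0$.

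The open point concerns your first step, and it applies equally to Proposition \ref{prop:pointWiseConvergence}. For each fixed $f$ the strong law gives convergence off a null set that depends on $f$. Because $B(\mathcal X)$ is not separable, these null sets cannot be combined over all $f$. In fact, for atomless $\mu$ the combined event is empty. On any sample path, $f=\chi_{\{X_1,X_2,\dots\}}$ lies in $B(\mathcal X)$ and $T_\mu f=0$, yet $\hat T_{n,\mu}f=m_{n,\mu}\ge\tfrac12$ for large $n$. So neither argument produces a full-measure event on which Theorem \ref{thm:funcAnalAbstractSpectralConvergence} can be applied path by path. The pointwise convergence contained in part 1 holds only in the sense ``for each fixed $f$, almost surely'', and parts 2--3 need an extra step.

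One way to supply it is to show, on a single full-measure event, that $\|(\hat T_{n,\mu}-T_\mu)T_\mu\|\to0$ and $\|(\hat T_{n,\mu}-T_\mu)\hat T_{n,\mu}\|\to0$, and then argue from these norm estimates.
The first follows from pointwise convergence on a countable dense subset of the compact set $\overline{T_\mu(\{\|f\|\le1\})}\subset C(\mathcal X)$, together with $\sup_n\|\hat T_{n,\mu}\|<\infty$.
The second follows from Lemma \ref{lem:boundsOnDifferencesOfFunctionals} combined with the uniform law for the two-parameter family in Corollary \ref{cor:CrossedTermEstimate}.
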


\begin{thm}
\label{thm:llneigenfunctions2}
    Let $\lambda \notin rg(m_{\mu})$ be an eigenvalue of $U_\mu$ and $V \subset \mathbb{C}$ an open set such that $V \cap \sigma(U_\mu) = \{\lambda\}$. Then:
    \begin{enumerate}
        \item $U_{n, \mu} \stackunder{$\longrightarrow$}{$\scriptsize{\text{c}}$} U_{\mu}$ converges compactly almost surely.
        \item The eigenvalues in $\sigma(U_{n, \mu}) \cap V$ converge to $\lambda$ almost surely.
        \item If $\textup{Pr}_n$ is the spectral projection of $\sigma(U_{n, \mu}) \cap  V$ and $\textup{Pr}$ is the spectral projection of $\sigma(U_\mu) \cap V$, then $\textup{Pr}_n \stackunder{$\longrightarrow$}{$\scriptsize{\text{p}}$} \textup{Pr}$ pointwise almost surely. This convergence is in the uniform topology of functions.
    \end{enumerate}
\end{thm}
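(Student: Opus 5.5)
Since $U_{n,\mu} = M_{m_{n,\mu}} - \hat T_{n,\mu}$ and $U_\mu = M_{m_\mu} - T_\mu$, I will treat the multiplication part and the integral part separately and then invoke the abstract result \cite[Theorem \ref{thm:funcAnalAbstractSpectralConvergence}]{ChatelinFuncAnalysis1}. That result requires three things: compact convergence of the sequence of operators, and a spectral parameter lying outside the essential spectrum. The hypothesis $\lambda\notin rg(m_\mu)$ is exactly the hypothesis that keeps $\lambda$ away from the essential spectrum. Indeed, $T_\mu$ will be shown to be compact on $B(\mathcal{X})$, so $U_\mu - \lambda = (M_{m_\mu}-\lambda) - T_\mu$ is a compact perturbation of the multiplication operator $M_{m_\mu-\lambda}$. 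That multiplication operator is invertible once $\lambda$ is at positive distance from $\overline{rg(m_\mu)}$. I expect to need that $m_\mu$ is uniformly continuous in the sense provided by (C), so that $rg$ can be replaced by its closure. Alternatively I will read the hypothesis as $\lambda\notin\overline{rg(m_\mu)}$.

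\textbf{Step 1 (uniform LLN).} I will show almost surely
\[
\|d_{n,\mu}-d_\mu\|_{B(\mathcal{X})}\to 0 \quad\text{and}\quad \|m_{n,\mu}-m_\mu\|_{B(\mathcal{X})}\to 0 .
\]
The plan is to cover the compact $\mathcal{X}$ by finitely many balls $B_\delta(x_l)$. At each center I apply the strong law of large numbers to $k(x_l,X_i)$, and also to $k_\delta(x_l;X_i)$, whose mean is at most $\omega(\delta)$ by (C). Together these give
\[
|d_{n,\mu}(x)-d_\mu(x)| \le |d_{n,\mu}(x_l)-d_\mu(x_l)| + \tfrac1n\sum_i k_\delta(x_l;X_i)+\omega(\delta).
\]
Here I use that $|k(x,z)-k(x_l,z)|\le k_\delta(x_l;z)$ for $x\in B_\delta(x_l)$. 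Letting $n\to\infty$ and then $\delta\to0$ along a countable sequence gives the claim. By (LB), $d_{n,\mu}\ge a/2$ eventually, so $1/d_{n,\mu}\to 1/d_\mu$ uniformly. The same covering argument applied to $h_\mu$ and $h_{n,\mu}$ handles $m_{n,\mu}$. The $1/d_\mu(y)$ term needs an extra SLLN for the function $k(x_l,y)/d_\mu(y)$, which is bounded by $M/a$. Consequently $\|M_{m_{n,\mu}}-M_{m_\mu}\|\to0$ in operator norm, and also $\|\hat T_{n,\mu}-T_{n,\mu}\|\to0$ in operator norm, because $\sup|h_{n,\mu}-h_\mu|\to0$.

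\textbf{Step 2 (compact convergence of $T_{n,\mu}$).} I will verify the definition of compact convergence from Definition \ref{def:convergenceDefinitions}.

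\emph{Pointwise convergence.} I need $T_{n,\mu}f\to T_\mu f$ uniformly for each fixed $f\in B(\mathcal{X})$. This follows by the same ball-covering plus SLLN argument, now applied to $h_\mu(x_l,\cdot)f(\cdot)$ and to $k_\delta(x_l;\cdot)|f|$.

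\emph{Relative compactness.} I need: for bounded sequences $f_n$, the set $\{T_{n,\mu}f_n\}$ is relatively compact in $B(\mathcal{X})$. This is the main obstacle, since $B(\mathcal{X})$ lacks Arzel\`a--Ascoli and the images are discontinuous. My plan is a total-boundedness argument via the covering. On $B_\delta(x_l)$ the oscillation of $T_{n,\mu}f_n$ is bounded by $\|f_n\|$ times
\[
\tfrac1n\sum_i k_\delta(x_l;X_i)+\text{(oscillation of }1/d_\mu\text{)},
\]
and this is eventually at most $C\omega(\delta)$ almost surely. Moreover $1/d_\mu$ has oscillation at most $\omega(\delta)/a^2$ on balls by (C). Hence each $T_{n,\mu}f_n$ is within $C\omega(\delta)$ of a function that is constant on the cells of a fixed finite partition subordinate to the cover. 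Those values range in a bounded subset of $\mathbb{R}^L$, which gives a finite $C\omega(\delta)$-net. The finitely many early indices are handled separately, since each $T_{n,\mu}$ has finite rank. The same oscillation argument with $\mu$ in place of $\mu_n$ shows that $T_\mu$ is compact, which was needed for the essential spectrum remark above.

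\textbf{Step 3 (conclusion).} By Step 1, $\hat T_{n,\mu}$ differs from $T_{n,\mu}$ by a norm-null sequence, so $\hat T_{n,\mu}$ converges compactly to $T_\mu$. Then $U_{n,\mu}-U_\mu = (M_{m_{n,\mu}}-M_{m_\mu}) - (\hat T_{n,\mu}-T_\mu)$ is the sum of a norm-convergent part and a compactly convergent part. The notion of convergence required by \cite{ChatelinFuncAnalysis1}, for operators of the form (bounded multiplication) minus (compact), is preserved under such sums. This gives item 1. Items 2 and 3 then follow from the abstract theorem, applied with $\lambda$ isolated in $V$ and outside the essential spectrum.

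All almost sure statements involve only countably many SLLN events: countably many $\delta$, countably many centers, and countably many test functions. For the pointwise part, a single $f$ suffices at a time, and its exceptional null set is allowed to depend on $f$, which matches a.s. pointwise convergence of $\textup{Pr}_n$.
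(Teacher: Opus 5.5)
Your route is the same as the paper's. You split $U_{n,\mu}=M_{m_{n,\mu}}-\hat T_{n,\mu}$, prove $\|m_{n,\mu}-m_\mu\|_{B(\mathcal{X})}\to0$ and collectively compact convergence of the integral part, place $\sigma_{\text{ess}}(U_\mu)$ inside $rg(m_\mu)$, and invoke Theorem \ref{thm:funcAnalAbstractSpectralConvergence}. Your sub-lemmas differ but are sound: SLLN with the bracket $k_\delta(x_l;\cdot)$ replaces Bernstein plus Borel--Cantelli, and finite nets of step functions for $T_{n,\mu}$, transferred to $\hat T_{n,\mu}$ via $\|\hat T_{n,\mu}-T_{n,\mu}\|\to0$, replace Lemma \ref{lem:equicontinuity} and the diagonal argument. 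Your hedge about $\overline{rg(m_\mu)}$ is unnecessary, since $m_\mu=T_\mu1$ is continuous.

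\textbf{The gap: your closing paragraph, which the paper's proof shares.} Theorem \ref{thm:funcAnalAbstractSpectralConvergence} is deterministic. Applying it at a fixed $\omega$ requires $U_{n,\mu}(\omega)f\to U_\mu f$ for every $f\in B(\mathcal{X})$ simultaneously. Its stability step needs pointwise convergence at limit points determined by the sequence, not at a prescribed $f$. Since $B(\mathcal{X})$ is not separable, the $f$-dependent null sets cannot be intersected.

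In fact the pathwise hypothesis is false. For any non-atomic $\mu$ and every $\omega$, the bounded Borel function $f_\omega=\chi_{\{X_i(\omega):i\in\mathbb{N}\}}$ satisfies $\hat T_{n,\mu}f_\omega=m_{n,\mu}\ge\frac12$ while $T_\mu f_\omega=0$. Concretely, take $\mathcal{X}=[0,1]$, $\mu$ Lebesgue and $k\equiv1$. Then $U_\mu f=f-\int f\,d\mu$, $U_{n,\mu}f=f-\frac1n\sum_jf(X_j)$, and $\lambda=0\notin rg(m_\mu)$. The projections are $\textup{Pr}f=\int f\,d\mu$ and $\textup{Pr}_nf=\frac1n\sum_jf(X_j)$, so $\textup{Pr}_nf_\omega=1\neq0=\textup{Pr}f_\omega$ for all $n$.

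Consequences:
\begin{itemize}
\item Item 1, with pointwise convergence included as in your Step 2, fails on every sample path.
\item Item 3 is true only in the sense ``for each $f$, almost surely''.
\item The black-box theorem cannot deliver even that per-$f$ version.
\end{itemize}
The paper likewise feeds the per-$f$ Proposition \ref{prop:pointWiseConvergence} into a pathwise use of the abstract theorem.

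\textbf{Repair.} Replace the black box by norm estimates that hold on one full-measure event. For $z$ on a circle $\Gamma\subset V$ around $\lambda$, set $K_n(z)=(M_{m_{n,\mu}}-z)^{-1}\hat T_{n,\mu}$ and $K(z)=(M_{m_\mu}-z)^{-1}T_\mu$, so that $U_{n,\mu}-z=(M_{m_{n,\mu}}-z)(I-K_n(z))$.
\begin{enumerate}
\item Show that almost surely $\sup_{z\in\Gamma}\|(K-K_n)K_n\|\to0$ and $\sup_{z\in\Gamma}\|(K_n-K)K\|\to0$. Up to norm-small terms, these are uniform laws of large numbers over $\{h_\mu(x,\cdot)(m_\mu-z)^{-1}h_\mu(\cdot,w)\}$ and over $\{h_\mu(x,\cdot)(m_\mu-z)^{-1}T_\mu g:\|g\|_{B(\mathcal{X})}\le1\}$. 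The last factor ranges over a sup-norm compact set, so your covering argument handles both.
\item The identity $(I+(I-K)^{-1}K_n)(I-K_n)=I+(I-K)^{-1}(K-K_n)K_n$ then gives uniformly bounded inverses of $U_{n,\mu}-z$ on $\Gamma$ for large $n$, which yields the eigenvalue statement.
\item The second resolvent identity shows that $\textup{Pr}_nf\to\textup{Pr}f$ only requires $\hat T_{n,\mu}g\to T_\mu g$ for $g=f/(m_\mu-z)$, $z\in\Gamma$. This is the one place where an $f$-dependent null set is legitimately allowed.
\end{enumerate}
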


\begin{thm}
\label{thm:rateOfConvergenceTheorem}
    Let $u \in B(\mathcal{X})$ be an eigenvector of $U_\mu'$ with eigenvalue $\lambda \neq 1$, and $V \subset \mathbb{C}$ an open subset such that $V \cap \sigma(U_\mu) = \{\lambda\}$. Let $\text{Pr}_n$ be the spectral projection of $\sigma(U'_{n,\mu}) \cap V$.
    Let $L_\delta = N(\mathcal{X}, d, \delta)$ be the minimum number of balls of radius $\delta$ needed to cover $\mathcal{X}$.
    Suppose that there exist $C_L,C_\omega, m, m' > 0$ such that 
    \begin{align}
    \label{eq:hypOnCoverAndContinuity}
        L_\delta 
        \leq 
        C_L\frac{1}{\delta^{m}},\quad
        w(\delta) 
        \leq 
        C_\omega\delta^{m'}.
    \end{align}
    and 
    let {$\tilde{\gamma} = M^2\gamma/a$} with $\gamma = 32M^2/a^2 + \frac{8}{3}M/a$. Let $\alpha \geq 1$ such that
    \begin{equation}
        \frac{\alpha}{\tilde{\gamma}}
        -
        \frac{m}{2m'}
        >
        1.
    \end{equation}
    Then there exists $\overline{C}_{\lambda, T_\mu}>0$ such that for all $N \in \mathbb{N}$ with $N \geq 2$
    \begin{align}
    &\mathbb{P}
    \left(
    \bigcap_{n \geq N}
    \left\{
        \|u - \text{Pr}_n u\|_{B(\mathcal{X})}
    \leq
        4\overline{C}_{\lambda, T_\mu}\frac{\alpha}{\sqrt{n}}\sqrt{\ln(n)}
    \right\}
    \right)\\
    \geq&
    1
    -\frac{16\tilde{C}}{\alpha^{\frac{2m}{m'}}}
    \frac{\tilde{\gamma}m'}
    {\alpha m' - \tilde{\gamma} m - \tilde{\gamma}m'}
    \frac{1}{(N-1)^{\frac{\alpha}{\tilde{\gamma}}-\frac{m}{2m'}}}
    +
    4\frac{C_a}{C_e}
    \exp
    \left\{
        -(N-1)C_e
    \right\}
    \end{align}
    where $\tilde{C} := C_L\left(\frac{8MC_\omega}{a \ln(2) }\left(\frac{1}{a}+\frac{2M}{a} \right) \right)^\frac{m}{m'}$, $C_a := C_L
            \left(
            \frac{16C_\omega}{a}
            \right)^\frac{m}{m'}$ and $C_e := \frac{(a/2)^2}{32M^2 + \frac{8}{3}M(a/2)}$.
\end{thm}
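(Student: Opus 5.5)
We combine a deterministic perturbation bound for spectral projections with a uniform concentration estimate for $\hat T_{n,\mu}-T_\mu$ on the eigenvector $u$. Since $U'_{n,\mu}-U'_\mu = T_\mu-\hat T_{n,\mu}$ and $u$ is fixed, the natural target is an inequality of the form
\begin{equation}
    \|u-\mathrm{Pr}_n u\|_{B(\mathcal{X})}
\leq
    \overline{C}_{\lambda,T_\mu}
    \Big(
        \|(\hat T_{n,\mu}-T_\mu)u\|_{B(\mathcal{X})}
    +
        \|(\hat T_{n,\mu}-T_\mu)\hat T_{n,\mu}\|
    \Big).
\end{equation}
This should hold whenever the right-hand side is small, using the resolvent integral $\mathrm{Pr}_n-\mathrm{Pr} = \frac{1}{2\pi i}\oint_{\Gamma}\big(R_n(z)-R(z)\big)\,dz$ over a circle $\Gamma\subset V$ around $\lambda$ together with $\mathrm{Pr}\,u=u$. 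This is the quantitative form of the compact-convergence argument behind Theorem \ref{thm:llneigenfunctions1}, as in Chatelin. The second term is where compactness enters: norm convergence fails, but the product with a collectively compact family converges in norm. Since $\lambda\neq 1$, we write $R(z)=(1-z-T_\mu)^{-1}$ and use the identity $(\mu-T)^{-1}=\mu^{-1}(I+T(\mu-T)^{-1})$ to reduce everything to terms of the form $(\hat T_{n,\mu}-T_\mu)$ applied to $u$ or composed with $\hat T_{n,\mu}$. The constant $\overline C_{\lambda,T_\mu}$ absorbs $\sup_{z\in\Gamma}\|R(z)\|$ and the length of $\Gamma$.

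The probabilistic step is to show that, with high probability, $\sup_x|(\hat T_{n,\mu}-T_\mu)f(x)|\lesssim \alpha\sqrt{\ln n/n}$ uniformly over the relevant functions $f$. We split $\hat T_{n,\mu}-T_\mu=(\hat T_{n,\mu}-T_{n,\mu})+(T_{n,\mu}-T_\mu)$.
\begin{itemize}
\item The first piece is controlled by $\sup_x|d_{n,\mu}(x)-d_\mu(x)|$ via (UB) and (LB). We first need $d_{n,\mu}\geq a/2$, which gives the event with failure probability $\frac{C_a}{C_e}e^{-(N-1)C_e}$. Here $C_e$ is exactly Bernstein's exponent for deviation $a/2$ with range bound $M$, and $C_a$ is the covering number at the scale $\delta$ where $\omega(\delta)\approx a/16$.
\item For the uniform-in-$x$ control, we cover $\mathcal X$ by $L_\delta$ balls and apply Bernstein's inequality at the centres; the variance is bounded by $M^2/a^2$ and the range by $M/a$, which is where $\gamma=32M^2/a^2+\frac83 M/a$ comes from. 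We pass from centres to arbitrary $x$ using condition (C). The population oscillation is bounded directly by $\omega(\delta)$. The empirical oscillation $\frac1n\sum_j k_\delta(x;X_j)$ is bounded by applying Bernstein again, to the nonnegative functions $k_\delta(x_\ell;\cdot)$, whose means are at most $\omega(\delta)$.
\end{itemize}

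The scale is $\delta_n$ with $\omega(\delta_n)\sim\sqrt{\ln n/n}$, i.e. $\delta_n\sim (\ln n/n)^{1/(2m')}$, so $L_{\delta_n}\lesssim C_L (n/\ln n)^{m/(2m')}$. A Bernstein tail at level $\alpha\sqrt{\ln n/n}$ then gives $n^{-\alpha/\tilde\gamma}$, and the union bound gives a per-$n$ failure probability $\lesssim \tilde C\, n^{m/(2m')-\alpha/\tilde\gamma}$. Summing over $n\geq N$ and comparing with $\int_{N-1}^\infty x^{-(\alpha/\tilde\gamma-m/(2m'))}\,dx$ produces the factor $\frac{\tilde\gamma m'}{\alpha m'-\tilde\gamma m-\tilde\gamma m'}(N-1)^{-(\alpha/\tilde\gamma-m/(2m'))}$; convergence of this sum is exactly the hypothesis $\alpha/\tilde\gamma-m/(2m')>1$. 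The same union-over-$n$ summation of the exponential tails gives the term $4\frac{C_a}{C_e}e^{-(N-1)C_e}$.

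The main obstacle is the operator-norm term $\|(\hat T_{n,\mu}-T_\mu)\hat T_{n,\mu}\|$ in $B(\mathcal X)$, which requires a bound uniform over the unit ball of $B(\mathcal X)$, not just on $u$. I would first try writing $(\hat T_{n,\mu}-T_\mu)\hat T_{n,\mu} f(x)=\frac1n\sum_j g_x(X_j)f(X_j)$ with $g_x(y)=\int h(x,z)h_{n}(z,y)\,d(\mu_n-\mu)(z)$. This is bounded by $\sup_{x,y}\big|\int h(x,z)h(z,y)\,d(\mu_n-\mu)(z)\big|$, an empirical process indexed by pairs $(x,y)$. Covering $\mathcal X\times\mathcal X$ by $L_\delta^2$ balls and using (C) in each variable handles this, possibly requiring a larger constant $\alpha$. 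This accounts for the factor $4$ and the exponent $2m/m'$ in $\alpha^{2m/m'}$.
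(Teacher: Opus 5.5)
Your architecture is the paper's. It has an Atkinson-type perturbation inequality, the split of $\hat T_{n,\mu}-T_\mu$ through $T_{n,\mu}$, and the event $\inf_x d_{n,\mu}(x)\ge a/2$ giving the $C_a/C_e$ term. It then uses nets plus Bernstein, with the empirical oscillation controlled through $k_\delta(x_\ell;\cdot)$ (Lemma \ref{lem:boundOnIndivProbTerm}), and a two-variable net for the product term (Lemma \ref{lem:probBoundOnKernelWithKernel}, Corollary \ref{cor:CrossedTermEstimate}). It finishes with $\omega(\delta_n)\asymp\alpha\sqrt{\ln n/n}$ and a union bound over $n\ge N$.

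Deriving the perturbation inequality from the resolvent, instead of citing Theorem \ref{thm:quantifiedSpectralConvergence}, is if anything cleaner. It needs only the quantitative smallness of $\|(\hat T_{n,\mu}-T_\mu)\hat T_{n,\mu}\|$ and $\|\hat T_{n,\mu}\|\le 2M/a$ on your good event, not Proposition \ref{prop:collectiveCompactConvergence}, and it gives a deterministic constant. It also exposes a restriction the paper leaves implicit, since Theorem \ref{thm:quantifiedSpectralConvergence} only applies beyond a realization-dependent index. The bound $C\alpha\sqrt{\ln n/n}$ on the product term must stay, for all $n\ge N$, below a threshold fixed by the contour and the resolvent of $T_\mu$ on it. So this argument, yours or the paper's, only covers $N\ge N_0(\alpha,\Gamma)$, not every $N\ge 2$. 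Beyond that, two steps do not go through as written.

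First, the product term. Your identity is not correct; in fact $(\hat T_{n,\mu}-T_\mu)\hat T_{n,\mu}f(x)=\frac1n\sum_j f(X_j)\big[\int h_{n,\mu}(x,z)h_{n,\mu}(z,X_j)\,d\mu_n(z)-\int h_\mu(x,z)h_{n,\mu}(z,X_j)\,d\mu(z)\big]$. More importantly, even with the outer kernels matched, the integrand $z\mapsto h_\mu(x,z)h_{n,\mu}(z,y)$ depends on the whole sample through $d_{n,\mu}$. Its integral against $\mu_n-\mu$ is therefore not an average of independent centred variables, and neither Bernstein nor your net argument applies to it. The silent passage from $h_{n,\mu}$ to $h_\mu$ in ``this is bounded by'' is exactly the missing step. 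The paper supplies it in Proposition \ref{prop:estimateOnProjectionOfEigenvector} by telescoping: $T_\mu\hat T_{n,\mu}-\hat T_{n,\mu}^2=T_\mu(\hat T_{n,\mu}-T_{n,\mu})+(T_\mu-T_{n,\mu})T_{n,\mu}+T_{n,\mu}(T_{n,\mu}-\hat T_{n,\mu})+(T_{n,\mu}-\hat T_{n,\mu})\hat T_{n,\mu}$. The three terms containing $\hat T_{n,\mu}-T_{n,\mu}$ have total norm at most $\frac{4M}{a}\|\hat T_{n,\mu}-T_{n,\mu}\|\le\frac{16M^2}{a^3}\sup_x|d_{n,\mu}(x)-d_\mu(x)|$ (Lemma \ref{lem:boundsOnDifferencesOfFunctionals}). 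Only $(T_\mu-T_{n,\mu})T_{n,\mu}$ remains, with norm at most $\sup_{x,y}|\int h_\mu(x,z)h_\mu(z,y)\,d(\mu_n-\mu)(z)|$ and a deterministic integrand. With that inserted, your two-variable net is Corollary \ref{cor:CrossedTermEstimate}.

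Second, the final summation does not give the stated bound. Your per-$n$ failure probability $\tilde C n^{m/(2m')-\alpha/\tilde\gamma}$ carries one factor $L_{\delta_n}$. The product term needs the $L_{\delta_n}^2$-point net you invoke at the end, and $L_{\delta_n}^2\lesssim n^{m/m'}$. So the dominant term is of order $n^{-\sigma}$ with $\sigma=\alpha/\tilde\gamma-m/m'$, and summability needs $\sigma>1$, which the hypothesis $\alpha/\tilde\gamma-m/(2m')>1$ does not imply. Moreover the integral test gives $\sum_{n\ge N}n^{-s}\le\frac{(N-1)^{-(s-1)}}{s-1}$, losing one power of $N-1$. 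With your $s=\alpha/\tilde\gamma-m/(2m')$ it yields $\frac{2\tilde\gamma m'}{2\alpha m'-\tilde\gamma m-2\tilde\gamma m'}(N-1)^{-(s-1)}$, not the factor you quote. The stated prefactor $\frac{\tilde\gamma m'}{\alpha m'-\tilde\gamma m-\tilde\gamma m'}$ is exactly $\frac1{\sigma-1}$ for $\sigma=\alpha/\tilde\gamma-m/m'$, but then the power must be $(N-1)^{-(\sigma-1)}$. The paper's last display makes the same jump, so neither argument proves the inequality exactly as written. What it does prove has the same shape under the hypothesis $\alpha/\tilde\gamma-m/m'>1$, with power $(N-1)^{-(\alpha/\tilde\gamma-m/m'-1)}$ and the exponential term subtracted rather than added. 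Some constants also change: squaring $L_{\delta_n}$ gives $\tilde C^2$. Corollary \ref{cor:CrossedTermEstimate} gives the cross-term exponent $\alpha a^2/(M^2\gamma)$, so $\tilde\gamma$ must be at least $M^2\gamma/a^2$, not $M^2\gamma/a$.
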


\begin{remark}
\label{rem:remarkAboutAnulli}
    These assumptions are strongly influenced by the case where the kernel $k$ is given by
\begin{equation}
    k_r(x,y) = \chi_{B_r(x)}(y).
\end{equation}
In this situation we have that
\begin{align}
    \|k_r(x,\cdot)\|_{L^1(\mathcal{X})} 
= 
    \mu(B_r(x)),
    \quad\quad
    \|(k_r)_\delta(x;\cdot)\|_{L^1(\mathcal{X})}
=
    \mu(B_{r+\delta}(x) \backslash B_{r-\delta}(x)).
\end{align}
And in very generic situations (for instance length spaces with a doubling Borel regular measure, see \cite[Corollary 2.2]{Buckley1999}, \cite[Lemma 2.1]{TomaszMeasureZeroSphere}), it is natural that 
\begin{equation}
    \lim_{\delta \rightarrow 0}
    \sup_{x \in \mathcal{X}}
    \mu(B_{r+\delta}(x)\backslash B_{r-\delta}(x))
=
    0,
    \quad\quad
    \mu(B_r(x))=
    \|k_r(x,\cdot)\|_{L^1(\mathcal{X})}>a_r>0,
\end{equation}
for some $a_r>0$.
\end{remark}

\begin{remark}
    As already mentioned, one of the main differences of this work with \cite{LuxburgSpectralConsistency} is the change of domain of the operators. This is so because even if $f \in \mathcal{C}(\mathcal{X})$, it is not necessarily true that $T_{n, \mu} f$, $\hat{T}_{n, \mu}f$, $U_{n, \mu} f$ or $U_{n,\mu}'f$ are in $\mathcal{C}(\mathcal{X})$, because $k(x,X_i)$ is not necessarily a continuous function of $x$. Interestingly, even if $f \in B(\mathcal{X})$, condition \eqref{eq:maxDifferenceInBall} implies that $P_{\mu}f, T_{\mu} f \in \mathcal{C}(\mathcal{X})$, and in particular that the eigenvectors of $T_{\mu}, U_{\mu}$ and $U_{\mu}'$ are continuous. Since we want to use spectral results for Banach spaces which require the domain to be the same as the codomain, we use the space $B(\mathcal{X})$. We also use the space $B(\mathcal{X})$ instead of $L^\infty(\mathcal{X},\mu)$ because we want our functions to be defined everywhere, and in $L^\infty(\mathcal{X},\mu)$ one has to deal with equivalence classes of functions that are defined almost everywhere. We don't use $l^\infty(\mathcal{X})$ (where $l^\infty(\mathcal{X})$ is the space of all bounded functions, that is $L^\infty$ with the counting measure), as this might contain functions that are not measurable and so the integral is not well defined.   
\end{remark}

\subsubsection*{Acknowledgments} The
author is supported by the Research Foundation– Flanders (FWO) via the Odysseus
programme Geometric and analytic properties of metric measure spaces with spectral
curvature constraints, with applications to manifold learning (G0DBZ23N).

\section{Functional analysis preliminaries}

The objective of this work will be to show some spectral convergence of operators. We will not prove directly the spectral convergence of the operators, but will prove some other convergence which in turn implies the spectral convergence. For this we need some results from spectral and perturbation theory.

Given a Banach space $(V, \|\cdot\|_V)$ and a bounded operator $S: V \rightarrow V$ let $\sigma(S) \subset \mathbb{C}$ be its spectrum. We denote $\sigma_{\text{d}}(S)$ to be the discrete spectrum, consisting of isolated eigenvalues with finite dimensional eigenspaces. We define the essential spectrum of an operator $S$ to be $\sigma_{\text{ess}}(S) = \sigma(S) - \sigma_{\text{d}}(S)$. One interesting fact about the essential spectrum is that it is invariant under compact perturbation (see for instance \cite{Kato1995}). That is if $K : V \rightarrow V$ is a compact operator, then $\sigma_{\text{ess}}(S) = \sigma_{\text{ess}}(S + K)$. The essential spectrum will be important to us since our convergence results for the spectrum only hold for the discrete spectrum. In particular the operator $U_\mu$ from equation \eqref{eq:Uoperator} will have a substantial part of its spectrum in the essential spectrum, only allowing the convergence results for the eigenvalues and eigenvectors outside this set.

We start by defining some notions of convergence which will be of interest to us.

\begin{definition}
\label{def:convergenceDefinitions}
    Let $(V, \|\cdot\|_V)$ be a Banach space and let $S_n: V \rightarrow V$ and $S:V \rightarrow V$ a sequence of bounded operators.
    \begin{enumerate}
        \item $S_n$ converges pointwise to $S$, denoted $S_n \stackunder{$\longrightarrow$}{$\scriptsize{\text{p}}$} S$ if for all $x \in V$ we have $\|S_nx- Sx\|_V \rightarrow 0$.
        \item $S_n$ converges in norm to $S$, denoted $S_n \stackunder{$\longrightarrow$}{\scriptsize{${\|\cdot\|}$}} S$ if $
        \|S-S_n\| := \sup_{x \in V \backslash \{0\}} {\|(S-S_n)x\|_V}/{\|x\|_V} \rightarrow 0$.
        \item $S_n$ converges compactly to $S$, denoted $S_n \stackunder{$\longrightarrow$}{\scriptsize$c$} S$, if for every sequence $x_n \in V$ such that $\|x_n\|_V \leq 1$, then $(S-S_n)x_n$ has a convergent subsequence.
        \item $S_n$ is collectively compact if $\cup_{\{n\geq N\}} S_n\left(\{x\in V: \|x\| \leq 1\}\right)$ has compact closure for some $N \in \mathbb{N}$.
        \item $S_n$ converges collectively compactly to $S$, denoted by $S_n \stackunder{$\longrightarrow$}{$\scriptsize \text{cc}$} S$, if $S_n \stackunder{$\longrightarrow$}{$\scriptsize{\text{p}}$} S$, and $S_n$ is collectively compact.
    \end{enumerate}
\end{definition}

\begin{definition}
\label{def:spectralProjections}
    Given a Banach space $(V, \|\cdot\|_V)$ and an operator $S:V \rightarrow V$. Given a simple curve $\gamma : [0,1] \rightarrow \mathbb{C}$ let $\Gamma_{\text{int}}$ be the interior of the curve. Given $W \subset \mathbb{C}$, suppose that $\Gamma_{\text{int}} \cap \sigma(S) = W \cap \sigma(S)$ and that $\gamma([0,1])\cap \sigma(S) = \emptyset$. Then we call the operator $\text{Pr}_{S,W}: V \rightarrow V$ given by
    \begin{equation}
        \text{Pr}_{S, W}
    :=
        \int_{\gamma([0,1])}
        \frac{1}{S-z I}
        dz,
    \end{equation}
    the spectral projection onto the eigenspace $\sigma(S) \cap W$.
\end{definition}
For a reference on this result see \cite{Kato1995}. Notice that in a lot of situations we will have that $W \cap \sigma(S)$ will correspond to a set of eigenvalues of finite multiplicity, and in this case, the operator will  correspond to the sum of projections into the eigenspaces.

The first theorem that we state shows that compact convergence of operators implies some form of spectral convergence. This result can be found in \cite{convergenceFAChatelin}.

\begin{thm}
\label{thm:funcAnalAbstractSpectralConvergence}
    Let $(V, \|\cdot\|_V)$ be a Banach space and $S_n$ and $S$ bounded linear operators on $V$ such that $S_n \stackunder{$\longrightarrow$}{\scriptsize $c$} S$. Consider $\lambda \in \sigma_{\textup{d}}(S)$ an isolated eigenvalue with finite multiplicity $m$, and $W \subset \mathbb{C}$ a neighborhood such that $\sigma(S) \cap W = \{\lambda\}$. Then:
    \begin{enumerate}
        \item There exists $N$ such that for all $n > N$ we have that $\sigma(S_n) \cap W$ is an isolated eigenvalue of $S_n$ with multiplicity $m$. Also the sequence of eigenvalues $\sigma(S_n) \cap W = \{\lambda_n\}$ converges to $\lambda$.
        \item Let $\textup{Pr}$ be the spectral projection of $S$ onto the eigenspace corresponding to $\lambda$. Let $\textup{Pr}_n$ be the projections onto the eigenspace of $S_n$ corresponding to the eigenvalue in $\sigma(S_n)\cap W = \{\lambda_n\}$. Then we have
        \begin{equation}
            \textup{Pr}_{n}
            \stackunder{$\longrightarrow$}{\scriptsize $p$}
            \textup{Pr}.
        \end{equation}
        In particular if the eigenvalue is simple, we have the convergence of the eigenvectors up to a change of sign.
    \end{enumerate}
\end{thm}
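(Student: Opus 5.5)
The plan is to derive everything from a uniform control of the resolvents $R_n(z) := (S_n - zI)^{-1}$ on a small circle $\Gamma \subset W$ around $\lambda$. Spectral projections are contour integrals of resolvents (Definition \ref{def:spectralProjections}), so their pointwise convergence should then follow from pointwise convergence of $R_n(z)$ to $R(z) := (S-zI)^{-1}$, uniformly in $z \in \Gamma$.

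\textbf{Standing assumption.} As in Chatelin's framework, I take compact convergence to include pointwise convergence $S_n \stackunder{$\longrightarrow$}{$\scriptsize{\text{p}}$} S$. By the uniform boundedness principle this gives $\sup_n \|S_n\| < \infty$.

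\textbf{Step 1 (key estimate).} For $z \in \Gamma$, write $S_n - zI = (S - zI)(I - B_n(z))$ with $B_n(z) := R(z)(S - S_n)$. I will show $\sup_{z\in\Gamma}\|B_n(z)^2\| \to 0$. Suppose not: then there are unit vectors $x_n$ and points $z_n \in \Gamma$ with $\|B_n(z_n)^2 x_n\|$ bounded below. Passing to subsequences, $z_n \to z$ and, by compact convergence, $(S-S_n)x_n \to y$. Hence $w_n := B_n(z_n)x_n \to w := R(z)y$, using continuity of $R$ on $\Gamma$. Now
\begin{equation}
    B_n(z_n)^2 x_n = R(z_n)(S-S_n)w + R(z_n)(S-S_n)(w_n - w).
\end{equation}
The first term tends to $0$ by pointwise convergence. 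The second is bounded by $\sup_{\Gamma}\|R\|\,\bigl(\|S\|+\sup_n\|S_n\|\bigr)\|w_n-w\| \to 0$, a contradiction. Consequently, for $n$ large, $I - B_n(z)^2$ is invertible, hence so is $I - B_n(z)$, with inverse $(I+B_n(z))(I-B_n(z)^2)^{-1}$. This gives $\Gamma \subset \rho(S_n)$ and $\sup_{n\ge N,\, z\in\Gamma}\|R_n(z)\| < \infty$.

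\textbf{Step 2 (resolvent and projection convergence).} From the identity $R_n(z) - R(z) = R_n(z)(S - S_n)R(z)$ and the uniform bound of Step 1, for each fixed $x$ it suffices that $(S-S_n)R(z)x \to 0$ uniformly in $z\in\Gamma$. This holds because $\{R(z)x : z\in\Gamma\}$ is compact, and pointwise convergence of a uniformly bounded sequence of operators is uniform on compact sets. Integrating over $\Gamma$ gives $\textup{Pr}_n x \to \textup{Pr}\,x$, which is part 2.

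\textbf{Step 3 (localization of the spectrum in $W$).} Running the argument of Step 1 on any compact subset of $W\setminus\{\lambda\}$ lying in $\rho(S)$ shows that, for $n$ large, $\sigma(S_n)\cap W$ lies inside arbitrarily small circles around $\lambda$. Any eigenvalues of $S_n$ in $W$ therefore converge to $\lambda$.

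\textbf{Step 4 (dimension count; main obstacle).} I need $\operatorname{rank}\textup{Pr}_n = m$, and that the part of $\sigma(S_n)$ in $W$ is a single eigenvalue of multiplicity $m$.

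The lower bound $\operatorname{rank}\textup{Pr}_n \ge m$ is easy. Range $\textup{Pr}$ is finite dimensional, so pointwise convergence is uniform on its unit ball, and $\textup{Pr}_n$ is then injective on range $\textup{Pr}$ for $n$ large.

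For the upper bound I would show that $\textup{Pr}$ is injective on range $\textup{Pr}_n$, i.e.\ $\|(\textup{Pr}_n-\textup{Pr})x_n\|\to0$ for unit $x_n \in \operatorname{range}\textup{Pr}_n$. The plan is to use the other resolvent identity, $\textup{Pr}_n - \textup{Pr} = \frac{1}{2\pi i}\int_\Gamma R(z)(S-S_n)R_n(z)\,dz$, and to apply compact convergence to the bounded family $R_n(z)x_n$, together with the square trick of Step 1 to absorb the non-compact part. Here uniformity in $z$ and the loss of compactness of $R_n$ itself are delicate. If this route fails, I would fall back on Chatelin's stability argument.

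Finally, a single limit point with total multiplicity $m$ in arbitrarily small neighbourhoods gives part 1. The sign statement for simple eigenvalues follows because rank-one projections $\textup{Pr}_n \to \textup{Pr}$ determine their normalized eigenvectors up to sign.
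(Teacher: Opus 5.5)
The paper gives no proof of this theorem; it only cites Chatelin, so your proposal has to be judged on its own. Steps 1--2 are the standard stability argument and are correct. Your standing assumption is genuinely needed: with the paper's literal definition, $S_n=S+K$ for a fixed compact $K\neq 0$ would ``converge compactly''. Note also that Step 1 extracts a convergent subsequence of $(S-S_n)x_n$ along an already chosen subsequence, so you need Chatelin's form ``$\{(S-S_n)x_n\}$ is relatively compact'', not merely ``has a convergent subsequence''.

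\textbf{Gap in Step 4.} The bound $\operatorname{rank}\textup{Pr}_n\le m$ is the real content of part 1, and you leave it open. Compact convergence only gives relative compactness of $(S-S_n)R_n(z)x_n$, never smallness, so attacking $(\textup{Pr}_n-\textup{Pr})x_n\to 0$ directly is the wrong target. The missing idea is that unit vectors $x_n\in\operatorname{range}\textup{Pr}_n$ form a relatively compact sequence. From $R_n=R+R(S-S_n)R_n$ and $\textup{Pr}=-\frac{1}{2\pi i}\int_\Gamma R(z)\,dz$,
\[
x_n=\textup{Pr}_n x_n=\textup{Pr}\,x_n-\frac{1}{2\pi i}\int_\Gamma R(z)(S-S_n)R_n(z)x_n\,dz .
\]
The first term is bounded in an $m$-dimensional space. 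For the second term:
\begin{itemize}
\item The identity $R_n(z)-R_n(z')=(z-z')R_n(z)R_n(z')$ and your bound $\sup_{n\ge N,\,z\in\Gamma}\|R_n(z)\|<\infty$ make the integrands Lipschitz in $z$, uniformly in $n$.
\item Hence Riemann sums over a fixed partition $z_1,\dots,z_J$ approximate the integrals uniformly in $n$.
\item Each sequence $\{(S-S_n)R_n(z_j)x_n\}_n$ is relatively compact by compact convergence, so the integrals form a totally bounded set.
\end{itemize}
Now suppose $\operatorname{rank}\textup{Pr}_n>m$ along a subsequence. Pick unit $x_n\in\operatorname{range}\textup{Pr}_n\cap\ker\textup{Pr}$. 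A further subsequence converges to some $x$ with $\|x\|=1$. By Step 2 and $\sup_n\|\textup{Pr}_n\|<\infty$,
\[
x=\lim\bigl(\textup{Pr}_n(x_n-x)+\textup{Pr}_n x\bigr)=\textup{Pr}\,x=\lim\textup{Pr}\,x_n=0,
\]
a contradiction. No squaring trick is needed here.

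\textbf{Part 1 as written is false for $m\ge 2$.} Even with Step 4 repaired, you cannot reach the statement as written. Take $S=0$ and $S_n=\mathrm{diag}(0,1/n)$ on $\mathbb{C}^2$: they converge in norm, yet $\sigma(S_n)\cap W=\{0,1/n\}$. Your closing sentence (``a single limit point with total multiplicity $m$ \dots\ gives part 1'') is therefore a non sequitur. What your argument actually yields, and what Chatelin and von Luxburg--Belkin--Bousquet state, is the following: $\sigma(S_n)\cap W$ consists of at most $m$ eigenvalues whose algebraic multiplicities sum to $m$, all tending to $\lambda$, and $\textup{Pr}_n$ is the total spectral projection, which is what your Step 2 handles. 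The eigenvector claim needs $m=1$.

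\textbf{Step 3 needs an extra hypothesis on $W$.} Step 3 only controls $\sigma(S_n)$ on compact subsets of $\rho(S)$. To cover all of $W$ minus a small disc, you need $\overline{W}\cap\sigma(S)=\{\lambda\}$ together with $\sup_n\|S_n\|<\infty$. Without it the conclusion fails: take $S=\mathrm{diag}(0,1)$, $S_n=\mathrm{diag}(0,1-1/n)$ and $W=\{|z|<1\}$.
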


The compact convergence does indeed imply some form of spectral convergence, however to obtain rates of convergence for these we will need a result found in the work by Atkinson \cite{Atkinson}, which needs collectively compact convergence.
\begin{thm}
\label{thm:quantifiedSpectralConvergence}
    Let $(V, \|\cdot\|_V)$ be a Banach space and $S_n$ and $S$ compact linear operators on $V$ such that $S_n \stackunder{$\longrightarrow$}{\scriptsize $cc$} S$. Consider $\lambda \in \sigma_{\textup{d}}(S)$ an isolated eigenvalue with finite multiplicity $m$, and $W \subset \mathbb{C}$ a neighborhood such that $\sigma(S) \cap W = \{\lambda\}$. Also denote the spectral projection onto the eigenvalue $\lambda$ by $\textup{Pr}$ and suppose $x$ is an eigenvector of $S$ with eigenvalue $\lambda$. Then there exists $N$ such that for $n > N$ the set $\sigma(S_n) \cap W$ is an isolated point. Let $\textup{Pr}_n$ be the projection corresponding to $\sigma(S_n) \cap W$. Then there exists a constant $C_0>0$ such that 
    \begin{equation}
    \label{eq:estimationOfEigenfunctions}
        \|x - \textup{Pr}_n x\|_V
    \leq
        C_0\left(
            \|
                \left(S_n - S\right)x
            \|_V
        +
            \|x\|_V
            \|
                (S-S_n)S_n
            \|
        \right),
    \end{equation}
    where $C_0$ is independent of $x$ but depends on $\lambda$ and $\sigma(S)$.
\end{thm}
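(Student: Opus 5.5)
The plan is a resolvent argument along a fixed contour around $\lambda$. The two ingredients are a uniform bound on the resolvents of $S_n$ along that contour, and the exact action of the resolvent of $S$ on an eigenvector. Since $S$ is compact and $\lambda$ is an isolated eigenvalue of finite multiplicity, I will assume $\lambda \neq 0$, as is implicit for compact operators. I will then shrink $W$ so that $0 \notin \overline{W}$, and choose a simple closed curve $\Gamma \subset W$ around $\lambda$ with $\Gamma \cap \sigma(S) = \emptyset$. Write $R(z) = (z-S)^{-1}$ and $R_n(z) = (z-S_n)^{-1}$; the projections of Definition \ref{def:spectralProjections} are contour integrals of these, up to the normalising constant $\pm 1/(2\pi i)$.

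\textbf{Step 1 (consequences of collective compactness).} Let $B$ be the unit ball. Pointwise convergence $S_n \stackunder{$\longrightarrow$}{$\scriptsize{\text{p}}$} S$ together with the uniform boundedness principle gives $\sup_n\|S_n\|<\infty$. Pointwise convergence of equibounded operators is uniform on compact sets. Since $\overline{\bigcup_{n \ge N} S_n(B)}$ is compact, this yields $\|(S-S_n)S_n\| \to 0$. The same argument applied to $\overline{S(B)}$ yields $\|(S-S_n)S\|\to 0$.

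\textbf{Step 2 (uniform invertibility on $\Gamma$).} The key algebraic identity uses $zR(z) = I + R(z)S$:
\begin{equation}
\bigl(I + R(z)S_n\bigr)(z - S_n) = zI + R(z)(S-S_n)S_n .
\end{equation}
On $\Gamma$ we have $|z| \ge c > 0$ and $\sup_{z\in\Gamma}\|R(z)\| < \infty$. Hence by Step 1 and a Neumann series, for $n$ large the operator $zI + R(z)(S-S_n)S_n$ is invertible, with inverse bounded uniformly in $z \in \Gamma$ and $n \ge N$. A symmetric identity, $(z-S_n)(I+S_nR(z)) = zI + S_n(S-S_n)R(z)$, gives a right inverse in the same way. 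Thus $\Gamma\cap\sigma(S_n)=\emptyset$ and
\begin{equation}
R_n(z) = \bigl(zI + R(z)(S-S_n)S_n\bigr)^{-1}\bigl(I + R(z)S_n\bigr),
\qquad \sup_{n\ge N,\ z\in\Gamma}\|R_n(z)\| =: C_1 < \infty .
\end{equation}
Combined with the upper semicontinuity given by Theorem \ref{thm:funcAnalAbstractSpectralConvergence}, this also shows that $\sigma(S_n)\cap W$ is an isolated part of the spectrum for $n > N$.

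\textbf{Step 3 (the estimate).} By the second resolvent identity,
\begin{equation}
R_n(z) - R(z) = R_n(z)(S_n - S)R(z).
\end{equation}
Since $Sx=\lambda x$, we have $R(z)x = x/(z-\lambda)$. Integrating over $\Gamma$ gives
\begin{equation}
\textup{Pr}_n x - \textup{Pr}\,x = c\int_\Gamma \frac{R_n(z)(S_n-S)x}{z-\lambda}\,dz ,
\qquad \textup{Pr}\,x = x .
\end{equation}
Therefore $\|x-\textup{Pr}_n x\|_V \le C\,C_1\,\mathrm{length}(\Gamma)\,\mathrm{dist}(\lambda,\Gamma)^{-1}\|(S_n-S)x\|_V$. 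This already implies \eqref{eq:estimationOfEigenfunctions}, since the second term there is nonnegative.

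The term $\|x\|_V\|(S-S_n)S_n\|$ enters naturally when $C_1$ is made explicit. The Neumann-series bound for $(zI + R(z)(S-S_n)S_n)^{-1}$ is controlled by $1/(c - \sup_\Gamma\|R\|\,\|(S-S_n)S_n\|)$. Alternatively, one can expand $(z - S_n)^{-1}$ to first order in $(S-S_n)S_n$, and the first-order correction is proportional to $\|(S-S_n)S_n\|$. In either form the constant $C_0$ depends only on $\Gamma$, on $\lambda$, on $\sup_\Gamma\|R\|$ (hence on $\sigma(S)$) and on $\sup_n\|S_n\|$, not on $x$.

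The main obstacle is Step 2. Norm convergence $S_n\to S$ is not available, so the usual perturbation argument fails. The identity that factors the deviation through $(S-S_n)S_n$, together with the collective compactness fact of Step 1, is what replaces it.
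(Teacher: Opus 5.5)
\textbf{A step in Step 2 fails as written.} The paper gives no proof of this statement; it only cites Atkinson. So your contour argument has to stand on its own. Steps 1 and 3 are correct, but the ``symmetric identity'' $(z-S_n)(I+S_nR(z)) = zI + S_n(S-S_n)R(z)$ does not give a right inverse ``in the same way''. Collectively compact convergence yields $\|(S-S_n)S_n\|\to 0$, but not $\|S_n(S-S_n)\|\to 0$.

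For example, on $\ell^2$ let $Sx = x_1e_1$ and $S_nx = (x_1+x_n)e_1$. Then:
\begin{itemize}
\item $S_n\to S$ pointwise, and every $S_n$ maps the unit ball into the compact set $\{te_1 : |t|\le\sqrt2\}$;
\item $(S-S_n)S_n=0$ for $n\ge 2$;
\item yet $S_n(S-S_n)x = -x_ne_1$, so $\|S_n(S-S_n)\|=1$ for every $n$.
\end{itemize}
Since $R(z)$ is an isomorphism, $\|S_n(S-S_n)R(z)\|\ge 1/\|z-S\|$, so the Neumann series is not available.

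The repair uses a hypothesis you never invoked, namely compactness of $S_n$. Your first identity shows that $\bigl(zI+R(z)(S-S_n)S_n\bigr)^{-1}\bigl(I+R(z)S_n\bigr)$ is a left inverse of $z-S_n$, so $z-S_n$ is injective. Since $z\neq 0$ and $S_n$ is compact, the Fredholm alternative makes it bijective. The left inverse is then $R_n(z)$, with the uniform bound you state. With this fix, your conclusion $\|x-\textup{Pr}_nx\|_V\le C\|(S_n-S)x\|_V$ holds, and it implies the statement.

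\textbf{Your explanation of the second term is not accurate.} That term never arises in your argument. In Atkinson's form its role is to keep $C_0$ free of $\sup_n\|S_n\|$, which your constant contains. Set $\Delta_n(z)=z^{-1}R(z)(S_n-S)S_n$. Your identity gives $R_n(z)=z^{-1}(I-\Delta_n(z))^{-1}(I+R(z)S_n)$, and subtracting $R(z)=z^{-1}(I+R(z)S)$ yields
\begin{equation}
R_n(z)-R(z) = z^{-1}\bigl(I-\Delta_n(z)\bigr)^{-1}\bigl[R(z)(S_n-S)+\Delta_n(z)\bigl(I+R(z)S\bigr)\bigr].
\end{equation}
Applying this to $x$ and integrating over $\Gamma$ gives exactly \eqref{eq:estimationOfEigenfunctions}. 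The constant $C_0$ then depends only on $\Gamma$, $\sup_\Gamma\|R(z)\|$ and $\|S\|$, once $\sup_\Gamma\|\Delta_n(z)\|\le\frac12$.

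Your variant is adequate for the paper's application, where $\|\hat{T}_{n,\mu}\|\le 2M/a$ on the event $\inf_x d_{n,\mu}(x)\ge a/2$. Note, however, that in both versions the threshold $N$ beyond which the resolvent bound holds is governed by $\|(S-S_n)S_n\|$. So that quantity still has to be controlled in a quantitative rate result.
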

Finally there are two other concepts that are usually defined in the space of continuous functions, but we are going to extend them to the space of functions $B(\mathcal{X})$. 
\begin{definition}
Given the i.i.d variables $\{X_n\}_{n \in \mathbb{N}}$ uniformly distributed with respect to $\mu$ and $\mu_n = \frac{1}{n}\sum_{i=1}^n \delta_{X_i}$, we say that a collection $\mathcal{F} \subset B(\mathcal{X})$ is a Glivenko-Cantelli class if
\begin{equation}
\mathbb{P}
\left(
\left\{
    \limsup_n
    \sup_{f \in \mathcal{F}}
    \left|
        \int_{\mathcal{X}} f(y) d\mu(y)
    -
        \int_{\mathcal{X}} f(y) d\mu_n(y)
    \right|
    = 0
\right\}
\right)
=
1
\end{equation}
\end{definition}
\begin{definition}
\label{def:coveringNumbers}
Given $\epsilon > 0$ and a metric space $(\mathcal{F},d_{\mathcal{F}})$, we define the covering number $N(\mathcal{F},\epsilon, d_{\mathcal{F}})$ as the smallest number $n$ such that $\mathcal{F}$ can be covered by $n$ balls of radius $\epsilon$.
\end{definition}
Notice that the number above is bounded for all $\delta>0$ when $(\mathcal{F},d_{\mathcal{F}})$ is totally bounded. These notions can be found in \cite{weakConvergenceStatistics, Mendelson2003}. Given $g \in B(\mathcal{X})$, we will be particularly interested in studying the following classes of functions
\begin{equation}
    \mathcal{K} = \{k(x,\cdot) : x \in \mathcal{X}\},
    \quad\quad
    \mathcal{H} = \{h_\mu(x,\cdot) : x   \in \mathcal{X}\},
\end{equation}
\begin{equation}
\label{eq:familygK}
    g\cdot \mathcal{K}
=
    \{
        g(\cdot)k(x,\cdot) : x   \in \mathcal{X}
    \},\quad\quad
    g\cdot \mathcal{H}
=
    \{
        g(\cdot)h_\mu(x,\cdot) : x   \in \mathcal{X}
    \},
\end{equation}
\begin{equation}
    \mathcal{H}
    \cdot
    \mathcal{H}
=
    \{
        h_\mu(x,\cdot)
        h_\mu(z,\cdot)
        :
        x,z \in \mathcal{X}
    \}.
\end{equation}

In the next proposition we study some functional analysis properties of the operators $P_\mu$ and $T_\mu$. In particular we show that their image is made of continuous functions.

    \begin{prop}
        The space $(B(\mathcal{X}), \|\cdot\|_B(\mathcal{X}))$ is a Banach space.
    \end{prop}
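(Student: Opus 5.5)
The plan is to show that $B(\mathcal{X})$ is a closed subspace of the space $l^\infty(\mathcal{X})$ of all bounded real functions with the sup norm. It is also possible to argue directly. We check three things in order: that $B(\mathcal{X})$ is a vector space, that $\|\cdot\|_{B(\mathcal{X})}$ is a norm on it, and that it is complete.

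The first two points are routine. Sums and scalar multiples of measurable functions are measurable, and $\sup_x|f+g|\le \sup_x|f|+\sup_x|g|$. This gives the vector space structure together with the triangle inequality. Homogeneity is immediate. Definiteness holds because $\sup_x|f(x)|=0$ forces $f(x)=0$ for \emph{every} $x$; this is exactly why we work with genuine functions rather than equivalence classes.

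For completeness, let $(f_n)$ be a Cauchy sequence in $B(\mathcal{X})$.
\begin{enumerate}
\item For each fixed $x$ we have $|f_n(x)-f_m(x)|\le\|f_n-f_m\|_{B(\mathcal{X})}$. Hence $(f_n(x))$ is Cauchy in $\mathbb{R}$ and converges to some $f(x)$.
\item Given $\epsilon>0$, choose $N$ with $\|f_n-f_m\|_{B(\mathcal{X})}<\epsilon$ for $n,m\ge N$. Letting $m\to\infty$ pointwise gives $|f_n(x)-f(x)|\le\epsilon$ for all $x$ and all $n\ge N$. So $f_n\to f$ uniformly.
\item It follows that $\sup_x|f(x)|\le \|f_N\|_{B(\mathcal{X})}+\epsilon<\infty$, so $f$ is bounded.
\item $f$ is measurable, being the pointwise limit of measurable real-valued functions (for instance $f=\limsup_n f_n$).
\end{enumerate}
Thus $f\in B(\mathcal{X})$ and $\|f_n-f\|_{B(\mathcal{X})}\to0$.

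There is no real obstacle here. The only point needing care is the measurability of the limit. This follows from the standard fact that the pointwise $\limsup$ of a sequence of measurable functions is measurable, applied with respect to the Borel $\sigma$-algebra of $\mathcal{X}$ (or its $\mu$-completion, whichever is fixed).
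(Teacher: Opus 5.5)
Your proof is correct and follows essentially the same route as the paper: you take the pointwise limit of the Cauchy sequence, get measurability because it is a pointwise limit of measurable functions, and get uniform convergence by letting one index tend to infinity in the Cauchy estimate. The only differences are cosmetic. You deduce boundedness of $f$ from the uniform estimate, which is cleaner than the paper's separate boundedness argument, and you verify the norm axioms that the paper takes for granted.
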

    \begin{proof}
        We only need to show that Cauchy sequences converge, since $\|\cdot\|_{B(\mathcal{X})}$ is a norm. Thus suppose that $f_n$ is a sequence that is Cauchy for $\|\cdot\|_{B(\mathcal{X})}$. This implies that for each $x \in \mathcal{X}$ the sequence $f_n(x) \in \mathbb{R}$ is Cauchy, and so the sequence $f_n(x)$ converges in $\mathbb{R}$. Let $f(x)$ be it's limit. This implies that $f$ is the pointwise limit of $f_n$, and since $f_n$ are measurable this implies that $f$ is measurable. Since $f_n$ is Cauchy, there exist $N,K \in \mathbb{N}$ such that for all $n \geq N$ and $k \geq K$ we have
        \begin{equation}
            \|f_n(x)-f_m(x)\|_{B(\mathcal{X})}
        <
            1.
        \end{equation}
        Thus given any $x \in \mathcal{X}$ there exists $n_{x} \geq N$ such that
        \begin{equation}
            |f_{n_{x}}(x)-f(x)|
        \leq
            1.
        \end{equation}
        This implies that
        \begin{equation}
            |f(x)|
        \leq
            |f_{n_{x}}(x)-f(x)|
        +
            |f_{n_{x}}(x)-f_K(x)|
        +
            |f_K(x)|
        \leq
            2+\|f_K\|_{B(\mathcal{X})}.
        \end{equation}
        Thus $f \in B(\mathcal{X})$. Now we show that $f_n$ converges in $B(\mathcal{X})$ to $f$. Given $\epsilon > 0$, there exists $N,K \in \mathbb{N}$ such that for $n \geq N, k\geq K$ we have for all $x \in \mathcal{X}$
        \begin{equation}
            |f_k(x)-f_n(x)|
        <
            \epsilon.
        \end{equation}
        Taking the limit $k \rightarrow \infty$, we obtain for all $x \in \mathcal{X}$ and $n \geq N$
        \begin{equation}
            |f(x)-f_n(x)|
        \leq
            \epsilon,
        \end{equation}
        that is
        \begin{equation}
            \|f-f_n\|_{B(\mathcal{X})}
        \leq
            \epsilon
        \end{equation}
        for all $n \geq N$, finishing the proof.
    \end{proof}
    
    \begin{prop}
    \label{prop:imageInContinuousFunction}
        Let $a > 0$ and $\omega(\cdot): [0, +\infty[ \rightarrow \mathbb{R}$ a function such that $\lim_{\delta \rightarrow 0} \omega(\delta) = 0$. Given $\mu \in \mathcal{U}_{a,\omega(\cdot), k}$ then the operators $P_\mu$ and $T_\mu$ are compact, and have their image contained in $C(\mathcal{X})$, the space of continuous functions.
    \end{prop}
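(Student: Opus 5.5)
The plan is to show that both operators map the unit ball of $B(\mathcal{X})$ into a uniformly bounded, equicontinuous family of functions. We then conclude with Arzelà--Ascoli on the compact metric space $\mathcal{X}$. Compactness as a map $B(\mathcal{X})\to B(\mathcal{X})$ follows because $C(\mathcal{X})$ carries the same sup norm and sits isometrically inside $B(\mathcal{X})$.

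\textbf{Step 1: the operator $P_\mu$.} For $f\in B(\mathcal{X})$ with $\|f\|_{B(\mathcal{X})}\le 1$ we have $|P_\mu f(x)|\le M$ by \eqref{eq:UpBoundKernel}. For $y\in B_\delta(x)$,
\begin{equation}
    |P_\mu f(x)-P_\mu f(y)|
\le
    \int_{\mathcal{X}} |k(x,z)-k(y,z)|\,|f(z)|\,d\mu(z)
\le
    \int_{\mathcal{X}} k_\delta(x;z)\,d\mu(z)
\le
    \omega(\delta)
\end{equation}
by \eqref{eq:maxDifferenceInBall}. There is a measurability subtlety: $k_\delta(x;\cdot)$ is a supremum over an uncountable set. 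I would handle it as the paper implicitly does, by assuming $k_\delta(x;\cdot)$ is measurable (or replacing it by an upper integral). The pointwise inequality $|k(x,z)-k(y,z)|\le k_\delta(x;z)$ is all that is used. Hence $P_\mu f$ is continuous, uniformly so, with modulus $\omega$ independent of $f$. The image of the unit ball is therefore equicontinuous and bounded by $M$, so it is relatively compact in $C(\mathcal{X})$, and $P_\mu$ is compact.

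\textbf{Step 2: continuity of $d_\mu$ and $1/d_\mu$.} Note that $d_\mu=P_\mu\mathbf 1$. By Step 1 it is continuous, with $|d_\mu(x)-d_\mu(y)|\le\omega(\delta)$. By \eqref{eq:UpAndLowBound} it satisfies $a<d_\mu\le M$. Consequently
\begin{equation}
    \left|\frac{1}{d_\mu(x)}-\frac{1}{d_\mu(y)}\right|\le \frac{\omega(\delta)}{a^2},
\end{equation}
so $1/d_\mu$ is continuous and bounded by $1/a$.

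\textbf{Step 3: the operator $T_\mu$.} Write
\begin{equation}
    T_\mu = \tfrac12\left(M_{1/d_\mu}P_\mu + P_\mu M_{1/d_\mu}\right).
\end{equation}
The function $1/d_\mu$ is bounded and measurable, so $M_{1/d_\mu}$ is bounded on $B(\mathcal{X})$. Compositions of a compact operator with bounded ones are compact, so $T_\mu$ is compact. For continuity of the image, first observe that $P_\mu M_{1/d_\mu}f=P_\mu(f/d_\mu)$ is continuous by Step 1. Second, $M_{1/d_\mu}P_\mu f$ is a product of two continuous functions by Steps 1 and 2, hence continuous. The explicit modulus $\omega(\delta)\left(\frac{1}{a}+\frac{M}{a^2}\right)\|f\|$ comes from splitting the difference in the usual product way.

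\textbf{Expected obstacle.} The only delicate point is the measurability/integrability of $k_\delta(x;\cdot)$ needed to use \eqref{eq:maxDifferenceInBall}. This is essentially built into the hypothesis, so I expect the argument to be routine otherwise. A second, minor point is that relative compactness in $C(\mathcal{X})$ implies relative compactness in $B(\mathcal{X})$; this holds because the inclusion is isometric and $C(\mathcal{X})$ is closed in $B(\mathcal{X})$.
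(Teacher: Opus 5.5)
Your argument is correct. For $P_\mu$ it coincides with the paper's proof: the modulus $\omega(\delta)\|f\|_{B(\mathcal{X})}$, the bound $M$, and Arzel\`a--Ascoli.

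For $T_\mu$ you take a different route. The paper does not factor $T_\mu$. Instead it appeals to Lemma~\ref{lem:estimatesForH}, which shows that $h_\mu$ is itself an admissible kernel: it is bounded by $M/a$, has $L^1$ lower bound $a/M$, and satisfies the kernel-level condition \eqref{eq:maxDifferenceInBall} with modulus $\tilde{\omega}$. The $P_\mu$ argument is then rerun verbatim with $h_\mu$ in place of $k$.

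Your factorization $T_\mu=\tfrac12\left(M_{1/d_\mu}P_\mu+P_\mu M_{1/d_\mu}\right)$ is more economical and self-contained. It combines the ideal property of compact operators with the continuity of $1/d_\mu$. It never needs the ``sup inside the integral'' estimate for $h_\mu$, so it requires no extra measurability assumption on $(h_\mu)_\delta(x;\cdot)$. It also avoids the paper's forward reference to a lemma stated later, whose proof already uses $|d_\mu(x)-d_\mu(y)|\le\omega(\delta)$, which is exactly the $P_\mu$ case you establish in Step~1.

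What the paper's route buys is a kernel-level statement, $\mu\in\mathcal{U}_{a/M,\tilde{\omega},h_\mu}$. The paper reuses this afterwards to transfer the Bernstein-plus-covering estimates from $k$ to $h_\mu$ (Remark~\ref{rem:averageKernelRemark} and Corollary~\ref{cor:CrossedTermEstimate}). Your Step~3 yields only a modulus of continuity for the image functions $T_\mu f$. That suffices for this proposition, but it is a weaker type of statement than condition \eqref{eq:maxDifferenceInBall} for $h_\mu$.
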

    \begin{proof}
        We prove these properties for $P_\mu$ arguing they are the same for $T_\mu$ using Lemma \ref{lem:estimatesForH}, that is the fact that $T_\mu$ can be viewed as an operator $P_\mu$ with kernel $h_\mu$ having the same properties as $k$.  Given $g \in B(\mathcal{X})$, to prove the fact that $P_\mu g$ is continuous, given $x,x'$ such that $x' \in B_\delta(x)$ then using equation \eqref{eq:maxDifferenceInBall}
        \begin{align}
            \left|
                P_\mu g(x)
            -
                P_\mu g(x')
            \right|
        &=
            \left|
            \int_{\mathcal{X}}
            \left(
                k(x,y)g(y)
            -
                k(x',y)g(y)
            \right)
            d\mu(y)
            \right|\\
        &\leq
            \|
                g
            \|_{B(\mathcal{X})}
            \int_{\mathcal{X}}
                \sup_{z \in B_\delta(x)}
                \left|
                    k(x,y)
                -
                    k(z,y)
                \right|
                d\mu(y)\\
        &\leq
        \label{eq:estimationOfDiffForP}
            \|
                g
            \|_{B(\mathcal{X})}\omega(\delta),
        \end{align}     
        and $\omega(\delta) \rightarrow 0$ as $\delta \rightarrow 0$. This shows that $P_\mu(B(\mathcal{X})) \subset C(\mathcal{X})$. On the other hand given a sequence $g_n \in B(\mathcal{X})$ such that $\|g_n\|_{B(\mathcal{X})} \leq 1$, we show that $P_\mu g_n \in C(\mathcal{X})$ is uniformly bounded and is equicontinuous. Thus since $\mathcal{X}$ is compact, by  Arzelà–Ascoli Theorem, there exists a subsequence such that $P_\mu g_n$ converges in $C(\mathcal{X})$. For the uniform bound we use equation \eqref{eq:UpBoundKernel}
        \begin{equation}
            \left|
                P_\mu g_n(x)
            \right|
        \leq
            \left|
            \int_\mathcal{X}
                k(x,y)g_n(y)
            d\mu(y)
            \right|
        \leq
            \|g_n\|_{B(\mathcal{X})}
            M
        \leq
            M.
        \end{equation}
        For the equicontinuity, let $\epsilon > 0$ and $\delta > 0$ small enough such that $\omega(\delta) < \epsilon$. Then given $x \in \mathcal{X}$ and $x' \in B_\delta(x)$ using the same estimate as in inequality \eqref{eq:estimationOfDiffForP} we have
        \begin{align}
            \left|
                P_\mu g_n(x)
            -
                P_\mu g_n(x')
            \right|
        &\leq
            \|
                g_n
            \|_{B(\mathcal{X})}
            \omega(\delta)
    \leq
        \epsilon,
        \end{align}
        showing the equicontinuity, finishing the proof.
    \end{proof}

\begin{corollary}
     Let $u \in B(\mathcal{X})$. If $u$ is an eigenvector of the operators $P_\mu$ or $T_\mu$ with eigenvalue $\lambda \neq 0$, then $u \in C(\mathcal{X})$. If $u$ is an eigenvector of $U_\mu'$ with eigenvalue $\lambda \neq 1$ then $u \in C(\mathcal{X})$. If $u$ is an eigenvector of $U_\mu$, \eqref{eq:Uoperator} with eigenvalue $\lambda \notin rg(m_\mu)$ then $u \in C(\mathcal{X})$.
\end{corollary}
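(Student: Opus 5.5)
The plan is to write each eigenvalue equation so that $u$ equals a function known to be continuous, using Proposition \ref{prop:imageInContinuousFunction}, which says $P_\mu$ and $T_\mu$ map $B(\mathcal{X})$ into $C(\mathcal{X})$.

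\textbf{Case $P_\mu$, $T_\mu$.} If $P_\mu u = \lambda u$ with $\lambda \neq 0$, then $u = \lambda^{-1} P_\mu u$. The right-hand side lies in $C(\mathcal{X})$, so $u$ is continuous. The argument for $T_\mu$ is identical, using that $T_\mu$ also maps into $C(\mathcal{X})$.

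\textbf{Case $U_\mu'$.} From $U_\mu' u = (I - T_\mu)u = \lambda u$ we get $T_\mu u = (1-\lambda)u$. Since $1-\lambda \neq 0$, this gives $u = (1-\lambda)^{-1} T_\mu u$, which is continuous by the previous case.

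\textbf{Case $U_\mu$.} From \eqref{eq:Uoperator}, $U_\mu u = \lambda u$ reads $(m_\mu(x) - \lambda)u(x) = T_\mu u(x)$ for every $x$. Since $\lambda \notin rg(m_\mu)$, the factor $m_\mu(x) - \lambda$ never vanishes, so
\begin{equation}
    u(x) = \frac{T_\mu u(x)}{m_\mu(x)-\lambda}.
\end{equation}
The numerator is continuous. It remains to show that $m_\mu$ is continuous and that the denominator stays away from zero.

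For continuity, note that $m_\mu = T_\mu \mathbf{1}$, where $\mathbf{1}$ is the constant function. Indeed, $m_\mu(x) = \int h_\mu(x,y)\,d\mu(y) = T_\mu \mathbf{1}(x)$, so $m_\mu \in C(\mathcal{X})$ by Proposition \ref{prop:imageInContinuousFunction}.

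The main subtlety is the lower bound on the denominator. Because $\mathcal{X}$ is compact and $m_\mu$ is continuous, $rg(m_\mu)$ is a compact subset of $\mathbb{R}$. If $\lambda$ is real, this gives $\inf_x |m_\mu(x)-\lambda| > 0$. If $\lambda$ is non-real, the bound $|m_\mu(x)-\lambda| \geq |\mathrm{Im}\,\lambda| > 0$ holds directly.

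Hence $1/(m_\mu - \lambda)$ is a continuous function on $\mathcal{X}$. Then $u$ is the product of two continuous functions, so $u \in C(\mathcal{X})$.
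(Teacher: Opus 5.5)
Your proof is correct and follows the paper's argument: you invert each eigenvalue equation so that $u$ is a nonzero multiple of $P_\mu u$ or $T_\mu u$, or equals $T_\mu u$ divided by the nowhere-vanishing continuous function $m_\mu-\lambda$, and continuity then comes from Proposition~\ref{prop:imageInContinuousFunction}; you obtain continuity of $m_\mu$ from $m_\mu = T_\mu \mathbf{1}$, while the paper goes through $d_\mu = P_\mu 1$ and \eqref{eq:UpAndLowBound}. Your uniform lower bound on $|m_\mu-\lambda|$ is harmless but unnecessary, since the reciprocal of a nowhere-vanishing continuous function is already continuous, and your factors $(1-\lambda)^{-1}$ and $(m_\mu-\lambda)^{-1}$ have the correct signs (the paper writes $\lambda - 1$ and $\lambda - m_\mu$, which does not affect continuity).
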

\begin{proof}
    Using Proposition \eqref{prop:imageInContinuousFunction}, by the eigenvalue equation, if $\lambda \neq 0$ and $u \in B(\mathcal{X})$ is an eigenfunction of $P_\mu$ 
    then $u = \frac{1}{\lambda} P_\mu u \in C(\mathcal{X})$. The same works for $T_\mu$. For $U_\mu'$ if $u \in B(\mathcal{X})$ is an eigenvector of $U_\mu'$ with eigenvalue $\lambda \neq 1$, then
    \begin{equation}
        \lambda u = U_\mu' u = u - T_\mu u,
    \end{equation}
    that is $u = \frac{1}{\lambda - 1} T_\mu u \in C(\mathcal{X})$. Finally by Proposition \eqref{prop:imageInContinuousFunction}, we have that $d_\mu(x) = P_\mu 1(x)$ is a continuous function, and so the function $m_\mu$ given by \eqref{eq:MmuFunction} is also continuous by \eqref{eq:UpAndLowBound}. Thus if $\lambda \notin rg(d_\mu)$, and $u \in B(\mathcal{X})$ is an eigenvector of $U_\mu$ with eigenvalue $\lambda$, we have
    \begin{equation}
        u(x)
    =
        \frac{1}{\lambda - m_\mu(x)}
        T_\mu u(x)
        \in C(\mathcal{X}),
    \end{equation}
    concluding the proof.
\end{proof}
Notice that even though the eigenvectors of $T_{\mu,n}$, $U_{\mu,n}$ and $U_{\mu,n}'$ might not be continuous, by the spectral convergence they will convergence uniformly to continuous functions, which guarantees that the discontinuities of the eigenvectors tend to disappear as $n \rightarrow \infty$.

\section{Relation between discrete and continuous operators}

Consider a sequence of i.i.d variables $\{X_1,...,X_n\} \subset \mathcal{X}$ uniformly distributed with respect to $\mu$. In statistical analysis one deals with the empirical $n \times n$ kernel matrix given by
\begin{equation}
    K_{n,\mu} = \left(k(X_i, X_j)\right)_{1 \leq i,j\leq n}
\end{equation}
In some algorithms (like spectral clustering with random-walk Laplacian or data reduction algorithms) one must use the eigenvectors and eigenvalues of the matrices $K_{n,\mu}$, or those from the unnormalized Graph Laplacian \eqref{eq:unnormalizedGraphLaplacian} or the normalized Graph Laplacian \eqref{eq:normalizedGraphLaplacian}. However the matrices we study will not be any of these. We introduce a third matrix $M_{n,\mu}$, which is a diagonal, given by
\begin{equation}
    (M_{n,\mu})_{i,i}
=   
    \frac{1}{n}
    \sum_{j=1}^n
    \left( 
        \frac{k(X_i, X_j)}
        {d_{n,\mu}(X_j)}
    \right).
\end{equation}
With this we can define the matrices
\begin{equation}
\label{eq:empiricalMatrixAMV}
    L_{n,\mu} = 
    \frac{1}{2}
    \left(
    I + M_{n,\mu} - D_{n,\mu}^{-1}K_{n,\mu} - K_{n,\mu}D_{n,\mu}^{-1}
    \right),
\end{equation}
\begin{equation}
\label{eq:empiricalMatrixIdentity}
    L'_{n,\mu} = I - \frac{1}{2}\left( D_{n,\mu}^{-1}K_{n,\mu} + K_{n,\mu}D_{n,\mu}^{-1}\right).
\end{equation}
These matrices will be the ones associated with the empirical operators $U_{n,\mu}$ and $U_{n,\mu}'$ respectively, the same way $K_{n,\mu}$ is associated with $P_{n,\mu}$. One of the initial difficulties in relating these operators seems to be that they are defined in different spaces. By relating these matrices with the operators, we can understand the question of convergence of the matrix operators in the space of operators of $B(\mathcal{X})$. We do this by defining a projection operator $\rho_n : B(\mathcal{X}) \rightarrow \mathbb{R}^n$ given by
\begin{equation}
    \rho_n f = \left( f(X_1),..., f(X_n)\right).
\end{equation}
The following Lemma describes the relations between these operators
\begin{lemma}
    \label{lem:MatrixToOperator}
    We have that
    \begin{align}
        \rho_n \circ P_{n, \mu} &= \frac{1}{n}K_{n,\mu} \circ \rho_n,\\
        \rho_n \circ U_{n, \mu}  &= L_{n,\mu} \circ \rho_n,\\
        \rho_n \circ U_{n, \mu}'&= L'_{n,\mu} \circ \rho_n.
    \end{align}
\end{lemma}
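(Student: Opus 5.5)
The plan is to verify each identity coordinate by coordinate. Fix $f\in B(\mathcal{X})$ and $i\in\{1,\dots,n\}$, evaluate each empirical operator at the sample point $x=X_i$, and recognise the resulting finite sum as the $i$-th entry of the corresponding matrix applied to $\rho_n f=(f(X_1),\dots,f(X_n))$. No probabilistic or analytic input is needed. The only points requiring care are the normalisation convention for $K_{n,\mu}$ and the fact that $d_{n,\mu}(X_j)$ must be nonzero for $h_{n,\mu}$ and $D_{n,\mu}^{-1}$ to make sense.

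On normalisation: in the first identity $K_{n,\mu}$ must be read as the raw matrix $(k(X_i,X_j))_{i,j}$, as in the last displayed definition, so that the factor $\frac1n$ appears explicitly. In the products $D_{n,\mu}^{-1}K_{n,\mu}$ and $K_{n,\mu}D_{n,\mu}^{-1}$ of \eqref{eq:empiricalMatrixAMV} and \eqref{eq:empiricalMatrixIdentity}, the identities only hold if $K_{n,\mu}$ carries the $\frac1n$ of \eqref{eq:basisMatrices}. With $D_{n,\mu}$ also carrying $\frac1n$, the ratios $k(X_i,X_j)/d_{n,\mu}(X_i)$ then come with the single factor $\frac1n$ that $\mu_n$ produces. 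I will make this convention explicit. I will also assume, as is implicit in \eqref{eq:definitionOfH}, that $d_{n,\mu}(X_j)>0$ for all $j$; this holds for instance if $k(x,x)>0$, since $d_{n,\mu}(X_j)\ge k(X_j,X_j)/n$.

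First identity: by \eqref{eq:OperatorPnDef},
\[
(\rho_n P_{n,\mu}f)_i=\frac1n\sum_{j}k(X_i,X_j)f(X_j),
\]
which is exactly $\big(\tfrac1n K_{n,\mu}\rho_n f\big)_i$.

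Third identity: by \eqref{eq:THatDef},
\[
(\rho_n\hat T_{n,\mu}f)_i=\frac1n\sum_j \frac12 k(X_i,X_j)\Big(\frac1{d_{n,\mu}(X_i)}+\frac1{d_{n,\mu}(X_j)}\Big)f(X_j).
\]
The summand with $1/d_{n,\mu}(X_i)$ gives $(D_{n,\mu}^{-1}K_{n,\mu})_{ij}$ and the one with $1/d_{n,\mu}(X_j)$ gives $(K_{n,\mu}D_{n,\mu}^{-1})_{ij}$. Hence
\[
\rho_n\hat T_{n,\mu}=\tfrac12\big(D_{n,\mu}^{-1}K_{n,\mu}+K_{n,\mu}D_{n,\mu}^{-1}\big)\rho_n,
\]
and subtracting from $\rho_n I=I\rho_n$ gives the identity for $U_{n,\mu}'$.

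Second identity: since $U_{n,\mu}=M_{m_{n,\mu}}-\hat T_{n,\mu}$, it remains to identify the multiplication part. The key observation is
\[
m_{n,\mu}(X_i)=\frac12\cdot\frac{1}{d_{n,\mu}(X_i)}\cdot\frac1n\sum_j k(X_i,X_j)+\frac12\cdot\frac1n\sum_j\frac{k(X_i,X_j)}{d_{n,\mu}(X_j)}=\frac12\big(1+(M_{n,\mu})_{ii}\big),
\]
because the first sum is exactly $d_{n,\mu}(X_i)$. Therefore $\rho_n M_{m_{n,\mu}}f=\frac12(I+M_{n,\mu})\rho_n f$. Combining this with the computation of $\rho_n\hat T_{n,\mu}$ gives $\rho_nU_{n,\mu}=L_{n,\mu}\rho_n$.

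The main obstacle is not mathematical but bookkeeping: the cancellation $\frac{1}{d_{n,\mu}(X_i)}\cdot\frac1n\sum_j k(X_i,X_j)=1$, and keeping the $\frac1n$ factors consistent between the two definitions of $K_{n,\mu}$ in the paper. I will state the convention explicitly at the start of the proof.
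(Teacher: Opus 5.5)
The paper states this lemma without proof, treating it as a direct check, and your coordinatewise verification is correct and is exactly the intended argument. Evaluating at $X_i$, the cancellation $\frac{1}{d_{n,\mu}(X_i)}\cdot\frac1n\sum_j k(X_i,X_j)=1$ gives $m_{n,\mu}(X_i)=\frac12\bigl(1+(M_{n,\mu})_{ii}\bigr)$, and the rest is matching entries of $D_{n,\mu}^{-1}K_{n,\mu}$ and $K_{n,\mu}D_{n,\mu}^{-1}$. Your handling of the inconsistent normalisation is also right. One alternative: $D_{n,\mu}^{-1}K_{n,\mu}$ and $K_{n,\mu}D_{n,\mu}^{-1}$ are unchanged when both matrices are rescaled by the same factor, so taking both $K_{n,\mu}$ and $D_{n,\mu}$ unnormalised in this section would also work and gives one consistent convention for all three identities.
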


This result establishes the relation between the spectrum of the finite dimensional operators $K_{n,\mu}$, $L_{n,\mu}$, $L'_{n,\mu}$ and their infinite dimensional counter-parts $P_{n, \mu}$, $U_{n, \mu}$ and $U_{n, \mu}'$. Now we make a characterization of the spectrum of both $U_{n, \mu}$ and $U_{n, \mu}'$ in the following lemmas.

\begin{lemma}
\label{lem:discreteFiniteTransition}
    Suppose that $\inf_{x} d_{n,\mu}(x) > 0$. Then we have the following properties for $U_{n, \mu}$ and $L_{n,\mu}$:
    \begin{enumerate}
        \item If $f \in B(\mathcal{X})$ is an eigenvector of $U_{n, \mu}$ with eigenvalue $\lambda$, then $\rho_n f \in \mathbb{R}^n$ is an eigenvector of $L_{n,\mu}$ with eigenvalue $\lambda$.
        \item If $f \in B(\mathcal{X})$ is an eigenvector of $U_{n,\mu}$ with eigenvalue $\lambda \notin rg(m_{n,\mu})$ and $v:=(v_1,...,v_m) = \rho_n f \in \mathbb{R}^n$, then $f$ has the form
        \begin{equation}
        \label{eq:constructionOfSolution2}
            f(x) =
                \frac{1}{n}
                \frac{\sum_{j=1}^n
                h(X_j, x)v_j}{m_{n,\mu}(x) - \lambda}.
        \end{equation} 
        \item If $v \in \mathbb{R}^n$ is an eigenvector of $L_{n,\mu}$ with eigenvalue $\lambda \notin rg(m_{n,\mu})$, and $f \in B(\mathcal{X})$ has the form of equation \eqref{eq:constructionOfSolution2}, then it is an eigenfunction of $U_{n, \mu}$ with eigenvalue $\lambda$.
        \item The essential spectrum of $U_{n, \mu}$ is $\sigma_{\text{ess}}(U_{n, \mu}) \subset rg(m_{n,\mu})$. Similarly $\sigma_{\text{ess}}(U_{\mu}) \subset rg(m_\mu)$.
    \end{enumerate}
\end{lemma}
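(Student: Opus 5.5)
We treat the four items in turn, using Lemma \ref{lem:MatrixToOperator} and the decomposition $U_{n,\mu} = M_{m_{n,\mu}} - \hat T_{n,\mu}$ from \eqref{eq:empiricalOperatorUn}. The hypothesis $\inf_x d_{n,\mu}(x)>0$ ensures that $h_{n,\mu}$ and $m_{n,\mu}$ are bounded measurable functions. Consequently $\hat T_{n,\mu}$ and $M_{m_{n,\mu}}$ are bounded operators on $B(\mathcal{X})$, and $L_{n,\mu}$ is well defined.

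\textbf{Item 1.} This is immediate from Lemma \ref{lem:MatrixToOperator}: if $U_{n,\mu}f=\lambda f$, then
\[
L_{n,\mu}\rho_n f=\rho_n U_{n,\mu} f=\lambda \rho_n f .
\]
The one point to check is that $\rho_n f\neq 0$, so that it really is an eigenvector. If $\rho_n f=0$, then $\hat T_{n,\mu}f=0$, because $\hat T_{n,\mu}f$ depends only on the values $f(X_j)$. The eigen-equation then reduces to $(m_{n,\mu}(x)-\lambda)f(x)=0$ for all $x$. When $\lambda\notin rg(m_{n,\mu})$ this forces $f\equiv 0$, a contradiction. When $\lambda\in rg(m_{n,\mu})$, $f$ could be supported on the level set $\{m_{n,\mu}=\lambda\}$ away from the sample points. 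So in item 1 I would either add the assumption $\lambda\notin rg(m_{n,\mu})$ in the non-degenerate case, or interpret the statement as saying that $\rho_n f$ satisfies the eigen-equation whenever it is nonzero. I expect this degenerate case to be the main subtlety.

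\textbf{Item 2.} We rewrite the eigen-equation pointwise as
\[
(m_{n,\mu}(x)-\lambda)f(x)=\hat T_{n,\mu}f(x)=\frac1n\sum_j h_{n,\mu}(x,X_j)v_j .
\]
Since $\lambda\notin rg(m_{n,\mu})$, we may divide by $m_{n,\mu}(x)-\lambda$, and the symmetry of $h_{n,\mu}$ gives \eqref{eq:constructionOfSolution2}.

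\textbf{Item 3.} Let $f$ be defined by \eqref{eq:constructionOfSolution2}. First, $f$ is measurable. It is also bounded, which requires the distance from $\lambda$ to $rg(m_{n,\mu})$ to be positive. I would check this by noting that $m_{n,\mu}$ takes the form $\frac{1}{2}\big(1+\frac1n\sum_j k(x,X_j)/d_{n,\mu}(X_j)\big)$, so its range is a bounded set, and I would assume $\lambda$ lies outside its closure; in the paper's usage, $rg$ can be read as the closure of the range. Next, evaluating $f$ at $X_i$ and using $L_{n,\mu}v=\lambda v$ together with the matrix formula \eqref{eq:empiricalMatrixAMV} gives $\rho_n f=v$. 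Indeed, $(m_{n,\mu}(X_i)-\lambda)f(X_i)=(\hat T_{n,\mu}\text{-row applied to }v)_i$, and the eigen-equation for $L_{n,\mu}$ is exactly $(M_{n,\mu}\text{-diag}-\lambda)v_i=(\text{that row})\,v$, once we identify $(L_{n,\mu})_{ii}$-terms with $m_{n,\mu}(X_i)$ via Lemma \ref{lem:MatrixToOperator}. With $\rho_n f=v$ in hand, $\hat T_{n,\mu}f=\frac1n\sum_j h_{n,\mu}(\cdot,X_j)v_j=(m_{n,\mu}-\lambda)f$, which says $U_{n,\mu}f=\lambda f$. Finally $f\neq 0$ because $v\neq 0$.

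\textbf{Item 4.} We write $U_{n,\mu}=M_{m_{n,\mu}}-\hat T_{n,\mu}$, where $\hat T_{n,\mu}$ has rank at most $n$ and is therefore compact. Similarly $U_\mu=M_{m_\mu}-T_\mu$, with $T_\mu$ compact by Proposition \ref{prop:imageInContinuousFunction}. By invariance of the essential spectrum under compact perturbations, $\sigma_{\mathrm{ess}}(U_{n,\mu})=\sigma_{\mathrm{ess}}(M_{m_{n,\mu}})$, and this is contained in $\sigma(M_{m_{n,\mu}})$. For $z$ outside the closure of the range of $g$, the operator $M_g-z$ is invertible on $B(\mathcal{X})$ with inverse $M_{1/(g-z)}$. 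Hence $\sigma(M_g)\subset\overline{rg(g)}$, which gives the claim, with $m_\mu$ continuous on compact $\mathcal{X}$ so that its range is already closed.
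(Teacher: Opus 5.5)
Your proposal is correct and follows the paper's own route item by item. You use Lemma~\ref{lem:MatrixToOperator} for item 1, rewrite $U_{n,\mu}f=\lambda f$ as $(M_{m_{n,\mu}}-\lambda)f=\hat T_{n,\mu}f$ for item 2, evaluate at the sample points for item 3, and use compact-perturbation invariance of the essential spectrum for item 4. In item 3 the precise identification you need is $\tfrac12\bigl(1+(M_{n,\mu})_{ii}\bigr)=m_{n,\mu}(X_i)$, not one involving $(L_{n,\mu})_{ii}$.

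Your caveats are genuine refinements of the paper's terse argument, not gaps:
\begin{enumerate}
\item Item 1 really can fail when $\rho_n f=0$. For example, take $f=\chi_{\{x_0\}}$ with $x_0$ not a sample point and $\lambda=m_{n,\mu}(x_0)$.
\item Since $m_{n,\mu}$ may be discontinuous, $\sigma(M_{m_{n,\mu}})$ is in general only contained in the closure of $rg(m_{n,\mu})$.
\item Your observation that $\hat T_{n,\mu}$ has finite rank is a cleaner way to get compactness than invoking Lemma~\ref{lem:finiteOperatorIsCompact}.
\end{enumerate}
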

\begin{proof}
    \textit{1.} Follows from Lemma \ref{lem:MatrixToOperator}.

    \textit{2.} Since the eigenvalue equation $U_{n,\mu}f = \lambda f$ yields $(M_{m_{n,\mu}} - \lambda)f = \hat{T}_{n,\mu} f$.

    \textit{3.} Is obtained by evaluating $U_{n,\mu}$ and using the fact that $v$ is an eigenfunction of $L_{n,\mu}$.

    \textit{4.} Use the fact that $U_{n,\mu} = M_{m_{n,\mu}} - \hat{T}_{n,\mu}$ and $\hat{T}_{n,\mu}$ is a compact operator (check Lemma \ref{lem:finiteOperatorIsCompact}). Thus by  the invariance of the essential spectrum by sums of compact operators we have
    \begin{equation}
        \sigma_{\text{ess}}(M_{m_{n,\mu}} - \hat{T}_{n,\mu}) = \sigma(M_{m_{n,\mu}}) \subset rg(m_{n,\mu})
    \end{equation}
\end{proof}

The condition that $\inf_{x} d_{n,\mu}(x) > 0$ is imposed to guarantee that the operators $\hat{T}_{n,\mu}$ are well defined.

\begin{lemma}
    Suppose that $\inf_{x} d_{n,\mu}(x) > 0$. Then we have the following properties for $U_{n, \mu}'$ and $L_{n,\mu}'$:
    \begin{enumerate}
        \item If $f \in B(\mathcal{X})$ is an eigenvector of $U_{n, \mu}'$ with eigenvalue $\lambda$, then $\rho_n f \in \mathbb{R}^n$ is an eigenvector of $L_n'$ with eigenvalue $\lambda$.
        \item If $f \in B(\mathcal{X})$ is an eigenvector of $U_{n,\mu}'$ with eigenvalue $\lambda \neq 1$ and $v:=(v_1,...,v_n)=\rho_nf \in \mathbb{R}^n$, then $f$ has the form
        \begin{equation}
        \label{eq:constructionOfSolution}
            f(x)
            =
                \frac{1}{n}
                \frac{h_{n,\mu}(X_j,x)v_j}{1-\lambda}.
        \end{equation} 
        \item If $v \in \mathbb{R}^n$ is an eigenvector of $L_{n,\mu}'$ with eigenvector $\lambda \neq 1$, and $f \in B(\mathcal{X})$ has the form of equation \eqref{eq:constructionOfSolution}, then it will be an eigenfunction of $U_{n, \mu}'$ with eigenvalue $\lambda$.
        \item The essential spectrum of $U_{n, \mu}'$ is $\sigma_{\text{ess}}(U_{n, \mu}) = \{1\}$
    \end{enumerate}
\end{lemma}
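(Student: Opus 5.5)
The proof follows the pattern of the preceding lemma for $U_{n,\mu}$ and $L_{n,\mu}$, now applied to $U'_{n,\mu} = I - \hat T_{n,\mu}$. The assumption $\inf_x d_{n,\mu}(x)>0$ makes $h_{n,\mu}$ bounded, so $\hat T_{n,\mu}$ is a well-defined bounded operator on $B(\mathcal{X})$. It has finite rank: its image lies in the span of the functions $x\mapsto h_{n,\mu}(x,X_j)$, $j=1,\dots,n$. In particular it is compact, as in Lemma \ref{lem:finiteOperatorIsCompact}.

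For item 1, I apply $\rho_n$ to the eigenvalue equation $U'_{n,\mu} f = \lambda f$. Lemma \ref{lem:MatrixToOperator} gives $L'_{n,\mu}\rho_n f = \rho_n U'_{n,\mu} f = \lambda \rho_n f$. To see that $\rho_n f$ is a genuine eigenvector, I need $\rho_n f\neq 0$ when $\lambda\neq 1$. If $\rho_n f = 0$, then $\hat T_{n,\mu} f = 0$, since $\hat T_{n,\mu} f$ depends only on the values $f(X_j)$. Hence $(1-\lambda) f = 0$, which forces $f=0$. For $\lambda=1$ I would either state the claim for $\rho_n f\neq 0$ or note this caveat.

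For item 2, the equation $U'_{n,\mu} f=\lambda f$ rewrites as $(1-\lambda) f = \hat T_{n,\mu} f$. Since $\hat T_{n,\mu}$ depends on $f$ only through $v=\rho_n f$, I get
\begin{equation}
f(x) = \frac{1}{1-\lambda}\,\frac{1}{n}\sum_{j=1}^n h_{n,\mu}(x,X_j) v_j .
\end{equation}
By the symmetry of $h_{n,\mu}$ this is exactly \eqref{eq:constructionOfSolution}, with the sum over $j$ understood.

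For item 3, I define $f$ by this formula for an eigenvector $v$ of $L'_{n,\mu}$ with $\lambda\neq 1$. Evaluating at the sample points gives $\rho_n f = \frac{1}{1-\lambda}\rho_n \hat T_{n,\mu}$ applied to any function with sample values $v$. Using Lemma \ref{lem:MatrixToOperator}, $\rho_n\hat T_{n,\mu}$ acts on sample values as the matrix $I-L'_{n,\mu}$. So $\rho_n f = \frac{1}{1-\lambda}(I-L'_{n,\mu})v = \frac{1}{1-\lambda}(1-\lambda)v = v$. Then $\hat T_{n,\mu} f$ depends only on $\rho_n f = v$, which gives $\hat T_{n,\mu} f = (1-\lambda) f$, that is, $U'_{n,\mu} f=\lambda f$. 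Also $f\neq 0$ because $\rho_n f = v\neq 0$.

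For item 4, I use the invariance of the essential spectrum under compact perturbations. Since $\hat T_{n,\mu}$ is compact, $\sigma_{\text{ess}}(I-\hat T_{n,\mu}) = \sigma_{\text{ess}}(I)$. The identity on the infinite-dimensional space $B(\mathcal{X})$ has spectrum $\{1\}$, with an infinite-dimensional eigenspace. So $\sigma_{\text{ess}}(I) = \{1\}$, and the statement should read $\sigma_{\text{ess}}(U'_{n,\mu})=\{1\}$.

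A more direct argument is also available. The finite-rank operator $\hat T_{n,\mu}$ has at most $n$ nonzero eigenvalues, each of finite multiplicity. Its kernel has finite codimension, hence is infinite-dimensional. That kernel is the eigenspace of $U'_{n,\mu}$ for the eigenvalue $1$.

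The main obstacle is not analytic but bookkeeping. One must handle the non-injectivity of $\rho_n$ correctly: $\hat T_{n,\mu}$ factors through $\rho_n$, and the case $\lambda=1$ must be excluded or caveated in items 1–3. One must also keep the index convention in $h_{n,\mu}(X_j,x)$ versus $h_{n,\mu}(x,X_j)$ consistent with the matrix $L'_{n,\mu}$ through Lemma \ref{lem:MatrixToOperator}.
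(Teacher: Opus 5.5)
Your proposal is correct and follows the same route the paper intends (its proof simply defers to that of Lemma \ref{lem:discreteFiniteTransition}): you apply $\rho_n$ and Lemma \ref{lem:MatrixToOperator} for item 1, rewrite the eigenvalue equation as $(1-\lambda)f=\hat T_{n,\mu}f$ for items 2--3, and use compactness of $\hat T_{n,\mu}$ for item 4. Your caveats are also well founded: item 1 genuinely fails for $\lambda=1$ (e.g.\ $f=\chi_{\mathcal{X}\setminus\{X_1,\dots,X_n\}}$ satisfies $U'_{n,\mu}f=f$ with $\rho_n f=0$), item 2 needs the sum over $j$, item 4 should read $\sigma_{\text{ess}}(U'_{n,\mu})$, and your direct finite-rank argument for item 4 usefully avoids relying on compact-perturbation invariance for the paper's definition $\sigma_{\text{ess}}=\sigma\setminus\sigma_{\text{d}}$.
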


\begin{proof}
    Proof similar to that of Lemma \ref{lem:discreteFiniteTransition}.
\end{proof}

\section{Pointwise convergence of the Kernel operators}

We start this section with the objective of proving that the class $g \cdot \mathcal{K}$ is a Glivenko-Cantelli class. This is the same as showing that the pointwise convergence of $P_{n,\mu}$ defined by \eqref{eq:OperatorPnDef} to $P_{\mu}$ defined by \eqref{eq:operatorOfKernel} happens almost surely. This is the content of Proposition \ref{prop:LawOfLargeNumber} which is proved using an estimate obtained in Lemma \ref{lem:boundOnIndivProbTerm}. The rest of the lemmas are also other probability estimates that follow the same proof structure as that of Lemma \ref{lem:boundOnIndivProbTerm}. These other lemmas will be useful when proving the compact convergence and numerical estimates for the rates of convergence in the next sections.

For the rest of this section we fix $M, a > 0$ and $\omega(\cdot) : [0,+\infty[ \rightarrow \mathbb{R}$ a non decreasing funcition such that $\lim_{\delta \rightarrow 0} \omega(\delta) = 0$. We assume that the kernel $k: \mathcal{X} \times \mathcal{X} \rightarrow \mathbb{R}$ satisfies the bound \eqref{eq:UpBoundKernel} and fix a measure $\mu \in \mathcal{U}_{a,\omega(\cdot), k}$ given by \eqref{eq:classOfMeasures}.

We start by recalling an important inequality from Probability Theory that will be essential for the rest of the work.

\begin{thm}\cite[Bernstein's inequality]{bernsteinProbBook}
\label{thm:bernseteinsInequality}
    Let $X_1,...,X_n$ be centered independent random variables such that $\sup_{i \in \{1,...,n\}} |X_i| \leq M$ and $\sup_{i \in \{1,...,n\}}\text{Var}(X_i) \leq V$ for some $M,V > 0$. Then for all $\epsilon > 0$
    \begin{equation}
        \mathbb{P}
        \left(
            \left\{
                \left|
                    \sum_{j=1}^n
                    \frac
                    {X_i(x)}
                    {n}
                \right|
                >
                \epsilon
            \right\}
        \right)
    \leq
        2
        \exp
        \left\{
            \frac{-n\epsilon^2}
            {2V + \frac{2}{3}\epsilon M}
        \right\}.
    \end{equation}
\end{thm}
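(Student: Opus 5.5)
The plan is the classical Chernoff (exponential moment) argument: bound the upper tail of $S_n := \sum_{i=1}^n X_i$, apply the same bound to $-X_1,\dots,-X_n$ for the lower tail, and add the two to get the factor $2$. Since the $-X_i$ are also centered, independent, bounded by $M$ and have variance at most $V$, it suffices to prove
\begin{equation}
    \mathbb{P}\left(\left\{ S_n > n\epsilon \right\}\right)
    \leq
    \exp\left\{ \frac{-n\epsilon^2}{2V + \frac{2}{3}\epsilon M} \right\}.
\end{equation}
For any $t>0$, Markov's inequality applied to $e^{tS_n}$, together with independence, gives
\begin{equation}
    \mathbb{P}\left(\left\{ S_n > n\epsilon \right\}\right)
    \leq
    e^{-tn\epsilon}\prod_{i=1}^n \mathbb{E}\left[e^{tX_i}\right].
\end{equation}

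The key step is a bound on each moment generating function. Expanding the exponential and using $\mathbb{E}[X_i]=0$, we get $\mathbb{E}[e^{tX_i}] = 1 + \sum_{k\geq 2} t^k\,\mathbb{E}[X_i^k]/k!$. The two hypotheses enter through
\begin{equation}
    |\mathbb{E}[X_i^k]| \leq M^{k-2}\,\mathbb{E}[X_i^2] \leq M^{k-2} V
    \quad (k\geq 2),
\end{equation}
which uses $|X_i|\leq M$, and through the elementary inequality $k! \geq 2\cdot 3^{k-2}$. Summing the resulting geometric series, for $0<tM<3$ we obtain
\begin{equation}
    \mathbb{E}\left[e^{tX_i}\right]
    \leq
    1 + \frac{Vt^2}{2}\sum_{k\geq 2}\left(\frac{tM}{3}\right)^{k-2}
    =
    1+\frac{Vt^2}{2(1-tM/3)}
    \leq
    \exp\left\{\frac{Vt^2}{2(1-tM/3)}\right\},
\end{equation}
where the last step is $1+u\leq e^u$. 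Hence
\begin{equation}
    \ln \mathbb{P}\left(\left\{ S_n > n\epsilon \right\}\right)
    \leq
    -tn\epsilon + \frac{nVt^2}{2(1-tM/3)}.
\end{equation}
This moment estimate, specifically keeping the constant $\frac{2}{3}$ exact, is the only delicate point. The rest is bookkeeping.

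It remains to choose $t$. I would not optimise exactly but take
\begin{equation}
    t = \frac{\epsilon}{V+\epsilon M/3}.
\end{equation}
This choice is admissible, since $tM = \epsilon M/(V+\epsilon M/3) < 3$. It also gives $1-tM/3 = V/(V+\epsilon M/3)$. Substituting, the quadratic term becomes $n\epsilon^2/(2(V+\epsilon M/3))$ and the linear term becomes $-n\epsilon^2/(V+\epsilon M/3)$. Their sum is
\begin{equation}
    -\frac{n\epsilon^2}{2V+\frac{2}{3}\epsilon M},
\end{equation}
which is exactly the claimed exponent. The symmetric argument for $-S_n$ and a union bound then give the stated inequality with the factor $2$.
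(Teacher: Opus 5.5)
The paper does not prove this statement; it quotes it from \cite{bernsteinProbBook}. Your argument is the standard Chernoff/moment-generating-function proof of Bernstein's inequality, and it is correct and complete with exactly the stated constants. The ingredients all check out: the moment bound $|\mathbb{E}[X_i^k]|\le M^{k-2}\mathbb{E}[X_i^2]\le M^{k-2}V$, where $\mathbb{E}[X_i^2]=\mathrm{Var}(X_i)$ because the $X_i$ are centered; the inequality $k!\ge 2\cdot 3^{k-2}$; and the choice $t=\epsilon/(V+\epsilon M/3)$. Together they give the exponent $-n\epsilon^2/(2V+\frac{2}{3}\epsilon M)$, and the symmetric argument for $-X_i$ gives the factor $2$. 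The only justification you left implicit is the termwise expansion of $\mathbb{E}[e^{tX_i}]$, which holds by dominated convergence since $|X_i|\le M$.
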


Given $\delta > 0$, define the random variables given by
\begin{equation}
\label{eq:defOfRandomVariableDifference}
    Z_{j,\delta}(x)
:=
    \sup_{z \in B_\delta(x)}
    \left|
        k(x, X_j)
    -
        k(z, X_j)
    \right|
=
    k_\delta(x; X_j).
\end{equation}
Notice that
\begin{equation}
\label{eq:averageIsAnulli}
    \mathbb{E}[Z_{j,\delta}(x)]
=
    \|k_\delta(x;\cdot)\|_{L^1(\mathcal{X})}
\leq
    \omega(\delta).
\end{equation}
We can center these variables by
\begin{equation}
\label{eq:defOfRandVariable0mean}
    \overline{Z}_{j,\delta}(x)
:=
    Z_{j,\delta}(x)
-
    \mathbb{E}[Z_{j,\delta}(x)].
\end{equation}

\begin{lemma}
\label{lem:estimateForDifference}
    Given $\epsilon > 0$, $x \in \mathcal{X}$, we have for all $n \in \mathbb{N}$
    \begin{equation}
    \label{eq:bernsteinProb1}
        \mathbb{P}\left(
        \left\{
            \left|
                \sum_{j=1}^n \frac{\overline{Z}_{j,\delta}(x)}{n}
            \right|
        >
            \epsilon
        \right\}
    \right)
\leq
    2\exp\left\{
        \frac{-n\epsilon^2}{32M^2+\frac{8}{3}M\epsilon}
        \right\}.
    \end{equation}
    Also given $g \in B(\mathcal{X})$ we have
    \begin{equation}
        \mathbb{P}\Big(
            \left\{
            \left|
                P_{n,\mu}g(x)
            -
                P_{\mu}g(x)
            \right|
        >
            \epsilon
            \right\}
        \Big)
    \leq
        2\exp\left\{
            \frac{-n\epsilon^2}
            {8\|g\|_{B(\mathcal{X})}^2M^2 + \frac{4}{3}\|g\|_{B(\mathcal{X})} M\epsilon}
        \right\}
    \end{equation}
\end{lemma}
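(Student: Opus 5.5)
Both bounds will follow from Bernstein's inequality (Theorem \ref{thm:bernseteinsInequality}) applied to a suitable family of i.i.d.\ centered random variables. The only work is to check the sup bound and the variance bound, and then match constants. Independence and identical distribution are immediate: $\overline{Z}_{j,\delta}(x)$ is a fixed measurable function of $X_j$, and likewise for $k(x,X_j)g(X_j)$, while the $X_j$ are i.i.d.\ with law $\mu$.

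\textbf{First estimate.} By \eqref{eq:UpBoundKernel}, $0\le Z_{j,\delta}(x)\le \sup_{z\in B_\delta(x)}\big(|k(x,X_j)|+|k(z,X_j)|\big)\le 2M$. Its mean lies in $[0,2M]$, so the centered variable satisfies $|\overline{Z}_{j,\delta}(x)|\le 4M$ (crudely, $\le 2M$). For the variance, $\mathrm{Var}(\overline{Z}_{j,\delta}(x))\le \mathbb{E}[Z_{j,\delta}(x)^2]\le (2M)^2=4M^2$. To land exactly on the stated denominator, I would use the cruder bounds $V=16M^2$ and sup bound $4M$. Then
\[
2V+\tfrac{2}{3}\epsilon\cdot 4M = 32M^2+\tfrac{8}{3}M\epsilon,
\]
which is the constant in \eqref{eq:bernsteinProb1}. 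Since weaker bounds only enlarge the denominator, the inequality as stated follows.

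\textbf{Second estimate.} Set $Y_j(x):=k(x,X_j)g(X_j)-P_\mu g(x)$. Then $\mathbb{E}[Y_j(x)]=\int k(x,y)g(y)\,d\mu(y)-P_\mu g(x)=0$, and $\frac1n\sum_j Y_j(x)=P_{n,\mu}g(x)-P_\mu g(x)$. We have $|k(x,X_j)g(X_j)|\le M\|g\|_{B(\mathcal{X})}$, and the same bound holds for $|P_\mu g(x)|$ (as in the proof of Proposition \ref{prop:imageInContinuousFunction}), so $|Y_j(x)|\le 2M\|g\|_{B(\mathcal{X})}$. Also $\mathrm{Var}(Y_j(x))\le \mathbb{E}[(k(x,X_j)g(X_j))^2]\le M^2\|g\|^2_{B(\mathcal{X})}$; I would take $V=4M^2\|g\|^2_{B(\mathcal{X})}$. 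Bernstein then gives the denominator
\[
8\|g\|^2_{B(\mathcal{X})}M^2+\tfrac{4}{3}\|g\|_{B(\mathcal{X})}M\epsilon,
\]
which is exactly the stated bound.

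\textbf{Measurability.} The only mildly delicate point is that $Z_{j,\delta}(x)$ is a supremum over the uncountable ball $B_\delta(x)$, so it need not obviously be a random variable. The paper implicitly treats it as one, since \eqref{eq:averageIsAnulli} already uses its expectation. I would adopt the same convention: assume $k_\delta(x;\cdot)$ is measurable (or work with an outer expectation), and note that this is the standing hypothesis behind \eqref{eq:maxDifferenceInBall}. Apart from this point, the argument is just constant matching.
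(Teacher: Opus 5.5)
Your proposal is correct and matches the paper's proof essentially verbatim: the paper also applies Bernstein's inequality with the crude bounds $|\overline{Z}_{j,\delta}(x)|\le 4M$, $\mathrm{Var}\le 16M^2$ and $|Y_j(x)|\le 2\|g\|_{B(\mathcal{X})}M$, $\mathrm{Var}\le 4\|g\|_{B(\mathcal{X})}^2M^2$, exactly to reproduce the stated denominators. Your remark on the measurability of $Z_{j,\delta}(x)$ is fair; the paper passes over this point silently and relies on the same implicit assumption already built into \eqref{eq:maxDifferenceInBall}.
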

\begin{proof}
    By the definition of the random variables \eqref{eq:defOfRandVariable0mean} and equation \eqref{eq:UpBoundKernel}
    \begin{align}
        \left|
            \overline{Z}_{j,\delta}(x)
        \right|
    &\leq
        4M,\\
    \text{Var}(\overline{Z}_{j,\delta}(x))
    &\leq
        16M^2.
    \end{align}
    These are i.i.d distributions with zero mean, thus we can apply Bernstein's inequality \ref{thm:bernseteinsInequality} to conclude equation \eqref{eq:bernsteinProb1}.
    Consider the variables given by
    \begin{equation}
        Y_{j,\delta}(x)
    =
        k(x,X_j)g(X_j)
    -
        \mathbb{E}\left[
            k(x,X_j)g(X_j)
        \right].
    \end{equation}
    These variables are i.i.d distributions with zero mean, and satisfy
    \begin{align}
        |Y_{j,\delta}(x)|
    &\leq
        2\|g\|_{B(\mathcal{X})}M,\\
        \text{Var}
        (Y_{j,\delta}(x))
    &\leq
        4\|g\|_{B(\mathcal{X})}^2M^2,
    \end{align}
    and since the $X_i$ are distibuted uniformly with respect to $\mu$
    \begin{equation}
        \frac{1}{n}
        \sum_{i=1}^n
        Y_{j,\delta}(x)
    =
        P_{n,\mu}g(x) - Pg(x).
    \end{equation}
    We can thus apply Bernstein's inequality to conclude
    \begin{align}
        \mathbb{P}\left(
            \left\{
            \left|
                P_{n,\mu}g(x)
                -
                P_{\mu}g(x)
            \right|
            >
            \epsilon
            \right\}
        \right)
    &=
        \mathbb{P}\left(
            \left\{
            \left|
                \frac{1}{n}
                \sum_{i=1}^n
                Y_{j,\delta}(x)
            \right|
            >
            \epsilon
            \right\}
        \right)\\
    &\leq
        2\exp\left\{
            \frac{-n\epsilon^2}
            {8\|g\|_{B(\mathcal{X})}^2M^2 + \frac{4}{3}\|g\|_{B(\mathcal{X})} M\epsilon}
        \right\},
    \end{align}
    concluding the proof.
\end{proof}

The next lemma allows us to improve the pointwise bounds obtained by Bernstein's inequality into uniform bounds on the space $\mathcal{X}$.
\begin{lemma}
\label{lem:boundOnIndivProbTerm}
    Let $\epsilon > 0$ and $g \in B(\mathcal{X})$, and consider $\delta > 0$ such that $\|g\|_{B(\mathcal{X})}\omega(\delta) \leq \frac{1}{8}\epsilon$. Suppose $\mathcal{X}$ can be covered by $L_\delta = N(\mathcal{X}, \delta, d)$ balls of radius $\delta > 0$. Then for all $n \in \mathbb{N}$ we have
    \begin{align}
        &\mathbb{P}\left(
        \left\{
            \sup_{x \in \mathcal{X}}
            \left|
                P_{n,\mu}g(x)
            -
                P_{\mu}g(x)
            \right|
        >
            \epsilon
        \right\}
        \right)\\
    \leq&
        2L_\delta
        \left(
                \exp
                \left(\frac{-n\epsilon^2}{32M^2 + \frac{8}{3}M\epsilon}
            \right)
            +
        \exp\left(
                \frac{-n\epsilon^2}
                {32\|g\|_{B(\mathcal{X})}^2M^2 + \frac{8}{3} \|g\|_{B(\mathcal{X})} M \epsilon}
        \right)
        \right).
    \end{align}
    In particular if $\|g\|_{B(\mathcal{X})} \leq 1$ we have
    \begin{equation}
    \label{eq:simplifiedBound}
        \mathbb{P}\left(
        \left\{
            \sup_{x \in \mathcal{X}}
            \left|
                P_{n,\mu}g(x)
            -
                P_{\mu}g(x)
            \right|
        >
            \epsilon
        \right\}
        \right)
    \leq
        4L_\delta
                \exp
                \left(\frac{-n\epsilon^2}{32M^2 + \frac{8}{3}M\epsilon}
            \right).
    \end{equation}
\end{lemma}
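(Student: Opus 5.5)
We cover $\mathcal{X}$ by $L_\delta$ balls $B_\delta(x_1),\dots,B_\delta(x_{L_\delta})$ and reduce the supremum over $\mathcal{X}$ to a maximum over the centres. The pointwise estimates at the centres come from Lemma \ref{lem:estimateForDifference}, and the oscillation inside each ball is controlled through the variables $Z_{j,\delta}$ and condition \eqref{eq:maxDifferenceInBall}.

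\textbf{Deterministic decomposition.} Fix $x \in B_\delta(x_l)$. Write
\begin{equation}
    |P_{n,\mu}g(x) - P_\mu g(x)|
\leq
    |P_{n,\mu}g(x) - P_{n,\mu}g(x_l)|
+
    |P_{n,\mu}g(x_l) - P_\mu g(x_l)|
+
    |P_\mu g(x_l) - P_\mu g(x)|.
\end{equation}
By the estimate \eqref{eq:estimationOfDiffForP} from Proposition \ref{prop:imageInContinuousFunction}, the third term is at most $\|g\|_{B(\mathcal{X})}\omega(\delta)\leq \epsilon/8$. The first term is bounded, using the symmetry of the sup in the definition of $k_\delta$, by
\begin{equation}
    \|g\|_{B(\mathcal{X})}\frac{1}{n}\sum_{j=1}^n Z_{j,\delta}(x_l)
=
    \|g\|_{B(\mathcal{X})}\left(\frac{1}{n}\sum_{j=1}^n \overline{Z}_{j,\delta}(x_l) + \mathbb{E}[Z_{j,\delta}(x_l)]\right),
\end{equation}
since $x\in B_\delta(x_l)$ means $x$ is among the points over which the sup defining $Z_{j,\delta}(x_l)$ is taken. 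By \eqref{eq:averageIsAnulli} the expectation part contributes at most $\|g\|\omega(\delta)\le\epsilon/8$. Consequently, on the event
\begin{equation}
    \bigcap_{l}\left\{\|g\|_{B(\mathcal{X})}\left|\tfrac1n\textstyle\sum_j \overline{Z}_{j,\delta}(x_l)\right|\le \tfrac{\epsilon}{4}\right\}\cap\left\{|P_{n,\mu}g(x_l)-P_\mu g(x_l)|\le \tfrac{\epsilon}{2}\right\}
\end{equation}
the supremum is at most $\epsilon$. The exact split of $\epsilon$ among the pieces is adjustable, and I would tune it so that the constants in the statement appear.

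\textbf{Probabilistic step.} A union bound over the $L_\delta$ centres and the two types of events reduces the claim to Lemma \ref{lem:estimateForDifference}, applied with $\epsilon$ replaced by a constant fraction of $\epsilon$, possibly divided by $\|g\|$. This produces the two exponentials with prefactor $2L_\delta$. The special case \eqref{eq:simplifiedBound} then follows because for $\|g\|\le1$ the second exponent is dominated by the first, so both exponentials are at most $\exp(-n\epsilon^2/(32M^2+\frac83M\epsilon))$.

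\textbf{Main obstacle.} The delicate step is the bookkeeping of constants. With the naive split ($\epsilon/4$, $\epsilon/2$), the denominators become $16\cdot 32M^2$-type quantities rather than the stated $32M^2+\frac83M\epsilon$. To land on the stated bounds I would estimate the centred variables more sharply. In place of the crude bounds $4M$ and $16M^2$, one can use $|\overline Z|\le 2M$ and $\mathrm{Var}\le 4M^2$, and similarly $\mathrm{Var}(Y)\le \|g\|^2M^2$. This slack of roughly a factor $4$ in each Bernstein denominator is meant to absorb the factor $2$ loss in $\epsilon$, giving the stated constants. A further subtlety is measurability of $\sup_{x}$ and of $Z_{j,\delta}$. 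The argument only uses that the event in question contains a finite intersection of measurable events, so I would phrase it as an outer-probability bound if needed.
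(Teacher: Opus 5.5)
Your deterministic reduction is correct and follows the paper's proof exactly: the same cover by $B_\delta(x_l)$, the same three-term triangle inequality, and the same budget $\epsilon/8+\epsilon/8+\epsilon/4+\epsilon/2$. The centre events at level $\epsilon/2$ also give exactly the second exponential through Lemma \ref{lem:estimateForDifference}. The gap is the first exponential, and your proposed repair does not close it.

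Write $G=\|g\|_{B(\mathcal{X})}$. The event you must control is $\bigl|\frac{1}{n}\sum_j \overline{Z}_{j,\delta}(x_l)\bigr|\le \epsilon/(4G)$, so the threshold loses a factor $4G$, not a factor $2$. With the constants of Lemma \ref{lem:estimateForDifference} the Bernstein denominator becomes $512G^2M^2+\frac{32}{3}GM\epsilon$. Your sharper bounds $|\overline{Z}_{j,\delta}|\le 2M$ and $\mathrm{Var}\le 4M^2$ only reduce it to $128G^2M^2+\frac{16}{3}GM\epsilon$. This is worse than $32M^2+\frac{8}{3}M\epsilon$ already at $G=1$. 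More fundamentally, for $G>1$ no constant-factor sharpening can make this term independent of $g$. The oscillation $|P_{n,\mu}g(x)-P_{n,\mu}g(x_l)|$ is proportional to $G$, so its probability bound must deteriorate with $G$.

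\textbf{What does work.} Since $0\le k<M$, you have $0\le Z_{j,\delta}\le M$. Hence $|\overline{Z}_{j,\delta}|\le M$, and by Popoviciu's inequality $\mathrm{Var}(\overline{Z}_{j,\delta})\le M^2/4$. Bernstein at level $\epsilon/(4G)$ then gives $2\exp\bigl(-n\epsilon^2/(8G^2M^2+\frac{8}{3}GM\epsilon)\bigr)$. This is at most $2\exp\bigl(-n\epsilon^2/(32M^2+\frac{8}{3}M\epsilon)\bigr)$ when $G\le 1$, which proves the lemma, and hence \eqref{eq:simplifiedBound}, for $G\le 1$.

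For $G>1$, rescaling $(g,\epsilon)\mapsto(g/G,\epsilon/G)$ leaves the hypothesis on $\delta$ unchanged. It yields $4L_\delta\exp\bigl(-n\epsilon^2/(32G^2M^2+\frac{8}{3}GM\epsilon)\bigr)$, which is all that Proposition \ref{prop:estimateOnProbabilityForRate} later uses. The exact two-term form needs one more comparison:
\begin{itemize}
\item If $\epsilon\ge 2MG$, the event is empty.
\item If the second exponential is at least $1/2$, the bound is trivial.
\item Otherwise, sharpen the centre term the same way via $\mathrm{Var}(k(x,X_j)g(X_j))\le M^2G^2$. Both pieces are then bounded by $2L_\delta e^{-A}$ with $A=n\epsilon^2/(8G^2M^2+\frac{8}{3}GM\epsilon)$. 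Since $\epsilon<2MG$, you get $A>2.8B$, where $B$ is the exponent of the second exponential, and therefore $4L_\delta e^{-A}\le 2L_\delta e^{-B}$.
\end{itemize}

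The paper's own proof has exactly the defect you sensed. It controls the $\overline{Z}$-event at a level $\delta_\epsilon$ with $G\delta_\epsilon<\epsilon/4$. It then replaces $\delta_\epsilon$ by $\epsilon$ in $\exp\bigl(-n\delta_\epsilon^2/(32M^2+\frac{8}{3}M\delta_\epsilon)\bigr)$, citing $\delta_\epsilon<\epsilon$. That goes the wrong way, because this expression is decreasing in its argument.
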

\begin{proof}
    Consider a $\delta>0$ such as in the hypothesis. Let $\{B_{\delta}(x_i)\}_{i \in \{1,...,L_\delta\}}$ be a cover of $\mathcal{X}$. Given some $\delta_\epsilon>0$ (to be chosen later), we can use Lemma \ref{lem:estimateForDifference} and conclude that
    \begin{align}
        &\mathbb{P}\left(
        \left\{
            \sup_{x \in \mathcal{X}}
            \left|
                P_{n,\mu}g(x)
            -
                P_{\mu}g(x)
            \right|
        >
            \epsilon
        \right\}
        \right)\\
    \leq&
        \mathbb{P}\left(
        \left\{
            \sup_{x \in \mathcal{X}}
            \left|
                P_{n,\mu}g(x)
            -
                P_{\mu}g(x)
            \right|
        >
            \epsilon
        \right\}
        \cap
        \left\{
            \forall {i \in \{1,...,L_\delta\}}
            \left|
            \sum_{j=1}^n
            \frac{\overline{Z}_{j,\delta}(x_i)}{n}
            \right|
            <
            \delta_\epsilon
        \right\}
        \right)\\
    &\quad\quad
        +
        \sum_{i=1}^{L_\delta}
        \mathbb{P}\left(
            \left|
            \sum_{j=1}^n
            \frac{\overline{Z}_{j,\delta}(x_i)}{n}
            \right|
            \geq
            \delta_\epsilon
        \right)\\
    \leq&
        \mathbb{P}\left(
        \left\{
            \sup_{x \in \mathcal{X}}
            \left|
                P_{n,\mu}g(x)
            -
                P_{\mu}g(x)
            \right|
        >
            \epsilon
        \right\}
        \cap
        \left\{
            \forall {i \in \{1,...,L_\delta\}}
            \left|
            \sum_{j=1}^n
            \frac{\overline{Z}_{j,\delta}(x_i)}{n}
            \right|
            <
            \delta_\epsilon
        \right\}
        \right)\\
    &\quad\quad
        +
            2L_\delta
            \exp\left(\frac{-n\delta_\epsilon^2}{32M^2 + \frac{8}{3}M\delta_\epsilon}\right).
    \end{align}
    Now we show that for the right choice of $\delta_\epsilon > 0$ we have
    \begin{equation}
    \label{eq:desiredInclusionOnFinitePoints}
        \left\{
            \sup_{x \in \mathcal{X}}
            \left|
                P_{n,\mu}g(x)
            -
                P_{\mu}g(x)
            \right|
        >
            \epsilon
        \right\}
        \cap
        \left\{
            \forall {i \in \{1,...,L_\delta\}}
            \left|
            \sum_{j=1}^n
            \frac{\overline{Z}_{j,\delta}(x_i)}{n}
            \right|
        <
            \delta_\epsilon
        \right\}
    \subset
        \bigcup_{i=1}^{L_\delta}
        \left\{
            \left|
                P_{n,\mu}g(x_i)
            -
                P_{\mu}g(x_i)
            \right|
            >
            \frac{\epsilon}{2}
        \right\}.
    \end{equation}
    Consider $x \in \mathcal{X}$ such that
    \begin{align}
        \left|
            P_{n,\mu}g(x)
        -
            P_{\mu}g(x)
        \right|
    &>
        \epsilon,\\
         \forall {i \in \{1,...,L_\delta\}}
            \left|
            \sum_{j=1}^n
            \frac{\overline{Z}_{j,\delta}(x_i)}{n}
            \right|
    &<
        \delta_\epsilon.
    \end{align}
    Since $\{B_\delta(x_i)\}_{i \in \{1,...,L_\delta\}}$ is a cover of $\mathcal{X}$, there must exist some $i \in \{1,...,L_\delta\}$ satisfying $x \in B_\delta(x_i)$. With this point $x_i$ we use triangle inequality to obtain
    \begin{align}
    \label{eq:3wayIneq}
        \left|
            P_{n,\mu}g(x_i)
        -
            P_{n,\mu}g(x_i)
        \right|
    \geq
    -
        \left|
            P_{n,\mu}g(x_i)
        -
            P_{n,\mu}g(x)
        \right|
    +
        \left|
            P_{n,\mu}g(x)
        -
            P_{\mu}g(x)
        \right|
   - 
        \left|
            P_{\mu}g(x)
        -
            P_{\mu}g(x_i)
        \right|.
    \end{align}
    Since $x \in B_\delta(x_i)$ using equation \eqref{eq:maxDifferenceInBall}, we have
    \begin{align}
        \left|
            P_{\mu}g(x)
        -
            P_{\mu}g(x_i)
        \right|
    &\leq
        \int_\mathcal{X}
            \left|
                g(y)
            \right|
            \left|
                k(x,y)
            -
                k(x_i, y)
            \right|
            d\mu(y)\\
    &\leq
        \|
            g
        \|_{B(\mathcal{X})}
        \int_\mathcal{X}
            \sup_{z \in B_\delta(x_i)}
            \left|
                k(z,y)
            -
                k(x_i,y)
            \right|
            d\mu(y)\\
    &\leq
        \|g\|_{B(\mathcal{X})}
        \omega(\delta).
    \end{align}
    Similarly since $\sum_{j=1}^n\left|\frac{\overline{Z}_{j,\delta}(x_i)}{n}\right| < \delta_\epsilon$, we can use equation \eqref{eq:averageIsAnulli} to conclude
    \begin{align}
        \left|
            P_{n,\mu}g(x_i)
        -
            P_{n,\mu}g(x)
        \right|
    &\leq
        \frac{1}{n}
        \sum_{j=1}^n
        \left|
            g(X_j)
        \right|
        \left|
            k(x_i, X_j)
        -
            k(x, X_j)
        \right|\\
    &\leq
        \frac{1}{n}
        \|g\|_{B(\mathcal{X})}
        \sum_{j=1}^n
        Z_{j,\delta}(x_i)\\
    &=
        \frac{1}{n}
        \sum_{j=1}^n
        \|g\|_{B(\mathcal{X})}
        \left(
            \overline{Z}_{j,\delta}(x_i)
        +
            \mathbb{E}
            [
                Z_{j,\delta}(x_i)
            ]
        \right)\\
    &\leq
        \|g\|_{B(\mathcal{X})}\left(
            \delta_\epsilon
        +
            \omega(\delta)
        \right).
    \end{align}
    And so using \eqref{eq:3wayIneq} we have 
    \begin{equation}
        \left|  
            P_{n,\mu}g(x_i)
        -
            P_{\mu}g(x_i)
        \right|
    \geq
        \epsilon
    -
        \|g\|_{B(\mathcal{X})}
        \left(
            \delta_\epsilon
        +
            2\omega(\delta)
        \right).
    \end{equation}
    Choosing $\delta_\epsilon>0$ such that $\|g\|_{B(\mathcal{X})}\delta_\epsilon < \frac{\epsilon}{4}$ and the hypothesis $2\|g\|_{B(\mathcal{X})} \omega(\delta) \leq \frac{\epsilon}{4}$, we conclude the inclusion \eqref{eq:desiredInclusionOnFinitePoints}. We also choose $\delta_\epsilon < \epsilon$. From this we can apply Lemma \ref{lem:estimateForDifference} obtaining
    \begin{align}
        &\mathbb{P}
        \Bigg(
            \Bigg\{
            \sup_{x \in \mathcal{X}}
            \left|
                P_{n,\mu}g(x)
            -
                P_{\mu}g(x)
            \right|
        >
            \epsilon
        \Bigg\}
        \cap
        \Bigg\{
            \forall {i \in \{1,...,L_\delta\}}
            \left|
            \sum_{j=1}^n
            \frac{\overline{Z}_{j,\delta}(x_i)}{n}
            \right|
        <
            \delta_\epsilon
        \Bigg\}
        \Bigg)\\
    \leq&
        \sum_{i=1}^{L_\delta}
        \mathbb{P}
        \left(
            \left\{P_{n,\mu}g(x_i) - P_{\mu}g(x_i) > \frac{\epsilon}{2}
            \right\}
        \right)
    \leq
        2L_\delta
            \exp
            \left(
                \frac{-n\epsilon^2/4}
                {8\|g\|^2_{B(\mathcal{X})}M^2 + \frac{4}{3}\|g\|_{B(\mathcal{X})} M \epsilon/2}
            \right)\\
    =&
        2L_\delta
            \exp
            \left(
                \frac{-n\epsilon^2}
                {32\|g\|^2_{B(\mathcal{X})}M^2 + \frac{8}{3}\|g\|_{B(\mathcal{X})} M \epsilon}
            \right)
    \end{align}
    Thus we obtain the desired inequality
    \begin{align}
        &\mathbb{P}\left(
        \left\{
            \sup_{x \in \mathcal{X}}
            \left|
                P_{n,\mu}g(x)
            -
                P_{\mu}g(x)
            \right|
        >
            \epsilon
        \right\}
        \right)\\
    \leq&
        2
        L_\delta
        \left(
                \exp
                \left(\frac{-n\epsilon^2}{32M^2 + \frac{8}{3}M\epsilon}
            \right)
            +
        \exp\left(
                \frac{-n\epsilon^2}
                {32\|g\|_{B(\mathcal{X})}^2M^2 + \frac{8}{3} \|g\|_{B(\mathcal{X})}M \epsilon}
        \right)
        \right).
    \end{align}
\end{proof}

    \begin{prop}
    \label{prop:LawOfLargeNumber}
        Given $g \in B(\mathcal{X})$ we have
        \begin{equation}
            \mathbb{P}\left(
                \left\{
                    \limsup_n
                    \sup_{x \in \mathcal{X}}
                    \left|
                        P_{n, \mu}g(x)
                    -
                        P_{\mu}g(x)
                    \right|
                    =
                    0
                \right\}
            \right)
            =
            1,
        \end{equation}
        that is the class $g \cdot \mathcal{K}$ is a Glivenko Cantelli class. Equivalently $P_{n,\mu}\stackunder{$\longrightarrow$}{$\scriptsize{\text{p}}$} P_\mu$ almost surely.
    \end{prop}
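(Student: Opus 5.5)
The plan is to combine the uniform tail bound of Lemma \ref{lem:boundOnIndivProbTerm} with the Borel--Cantelli lemma. Since $\mathcal{X}$ is compact, for every $\delta>0$ the covering number $L_\delta = N(\mathcal{X},\delta,d)$ is finite. If $g=0$ the statement is trivial. Otherwise we may normalize: replacing $g$ by $g/\|g\|_{B(\mathcal{X})}$ multiplies $P_{n,\mu}g - P_\mu g$ by a fixed positive constant. So we assume $\|g\|_{B(\mathcal{X})}\le 1$ and use the simplified bound \eqref{eq:simplifiedBound}.

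Fix $\epsilon>0$. Since $\omega(\delta)\to 0$, we can choose $\delta=\delta(\epsilon)>0$ with $\omega(\delta)\le \epsilon/8$, so the hypothesis of Lemma \ref{lem:boundOnIndivProbTerm} holds. Define the events
\begin{equation}
A_n(\epsilon) := \left\{ \sup_{x\in\mathcal{X}} |P_{n,\mu}g(x)-P_\mu g(x)| > \epsilon \right\}.
\end{equation}
By \eqref{eq:simplifiedBound},
\begin{equation}
\mathbb{P}(A_n(\epsilon)) \le 4L_{\delta(\epsilon)}\exp\left(-n c_\epsilon\right), \qquad c_\epsilon = \frac{\epsilon^2}{32M^2+\frac{8}{3}M\epsilon}>0.
\end{equation}
Here $L_{\delta(\epsilon)}$ does not depend on $n$, so $\sum_n \mathbb{P}(A_n(\epsilon))<\infty$ as a geometric series. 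Borel--Cantelli then gives $\mathbb{P}(\limsup_n A_n(\epsilon))=0$. In other words, almost surely $\sup_x|P_{n,\mu}g(x)-P_\mu g(x)|\le\epsilon$ for all sufficiently large $n$.

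Next take $\epsilon=1/k$ for $k\in\mathbb{N}$. The countable union $\bigcup_k \limsup_n A_n(1/k)$ is still a null set. On its complement, $\limsup_n \sup_x |P_{n,\mu}g(x)-P_\mu g(x)| \le 1/k$ for every $k$, so the limsup is $0$, which is the claimed identity. The Glivenko--Cantelli reformulation is just this statement written with $\int k(x,y)g(y)\,d\mu_n(y)$ versus $\int k(x,y)g(y)\,d\mu(y)$, uniformly over $x$.

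For the final claim, pointwise convergence $P_{n,\mu}\to P_\mu$ almost surely, the result above gives, for each fixed $g$, almost sure convergence in $B(\mathcal{X})$. I would note that "almost surely pointwise" is read with the null set allowed to depend on $g$. This is the reading the statement supports, since uncountably many $g$ cannot be handled by a single countable union. The only subtle point I foresee is exactly this quantifier order. It also matters to observe that the supremum over $x$ defines an event, i.e.\ is measurable, which one can either assume as the paper implicitly does or avoid by working with outer probability. The analytic difficulty is already absorbed into Lemma \ref{lem:boundOnIndivProbTerm}.
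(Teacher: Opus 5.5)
Your proposal is correct and follows essentially the same route as the paper: the paper also fixes $\epsilon$, chooses $\delta$ with $\|g\|_{B(\mathcal{X})}\omega(\delta)\le \epsilon/8$, and uses the geometric tail from Lemma \ref{lem:boundOnIndivProbTerm} to get $\sum_{j\ge n}\mathbb{P}(\cdot)\to 0$ (Borel--Cantelli written out by hand via continuity of the measure) before letting $\epsilon\to 0^+$. Your normalization to $\|g\|_{B(\mathcal{X})}\le 1$ so as to use \eqref{eq:simplifiedBound}, and your countable union over $\epsilon=1/k$, are only cosmetic variants of the paper's use of the two-exponential bound and its limit $\epsilon\to 0^+$.
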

    \begin{proof}
        Using continuity from above of the measure we have
        \begin{align}
    \label{eq:GlivenkoCantelliCondProof}
        \mathbb{P}\left(\left\{\limsup_n \sup_{x\in \mathcal{X}} |P_{n, \mu}g(x)-P_{\mu}g(x)| = 0\right\}\right)
   &=
        1
    -
        \mathbb{P}\left(\left\{\limsup_n \sup_{x\in \mathcal{X}} |P_{n, \mu}g(x)-P_{\mu}g(x)| >0\right\}\right)\\
    &=
        1
    -
        \lim_{\epsilon \rightarrow 0^+}\mathbb{P}\left(\left\{\limsup_n \sup_{x\in \mathcal{X}} |P_{n, \mu}g(x)-P_{\mu}g(x)| > \epsilon\right\}\right)\\
    &=
        1
    -
        \lim_{\epsilon \rightarrow 0^+}
        \lim_{n}
        \mathbb{P}\left(\bigcup_{j\geq n}\left\{\sup_{x\in \mathcal{X}} |P_{j,\mu}g(x)-P_{\mu}g(x)| > \epsilon\right\}\right)\\
    &\geq
        1
    -
        \lim_{\epsilon \rightarrow 0^+}
        \lim_{n}
        \sum_{j\geq n}
        \mathbb{P}\left(\left\{\sup_{x\in \mathcal{X}} |P_{j,\mu}g(x)-P_{\mu}g(x)| > \epsilon\right\}\right).
    \end{align}
    To study the term $\lim_{n}
        \sum_{j\geq n}
        \mathbb{P}\left(\left\{\sup_{x\in \mathcal{X}} |P_{j,\mu}g(x)-P_{\mu}g(x)| > \epsilon\right\}\right)$ choose $\delta > 0$ such that $\|g\|_{B(\mathcal{X})}\omega(\delta) \leq \frac{1}{8}\epsilon$ and using Lemma \ref{lem:boundOnIndivProbTerm} we can conclude that
        \begin{align}
        &\sum_{j\geq n}
        \mathbb{P}\left(\left\{\sup_{x\in \mathcal{X}} |P_{j,\mu}g(x)-P_{\mu}g(x)| > \epsilon\right\}\right)\\
        \leq&
        2L_\delta\sum_{j \geq n}\left(
                \exp
                \left(\frac{-j\epsilon^2}{32M^2 + \frac{8}{3}M\epsilon k}
            \right)
            +
        \exp\left(
                \frac{-j\epsilon^2}
                {32\|g\|^2_{B(\mathcal{X})}M^2 + \frac{8}{3}\|g\|_{B(\mathcal{X})} M \epsilon }
        \right)
        \right)
        \end{align}
        is a convergent series, and so its limit when $n \rightarrow \infty$ is zero. Thus we obtain
        \begin{equation}
            \lim_{n}
        \sum_{j\geq n}
        \mathbb{P}\left(\left\{\sup_{x\in \mathcal{X}} |P_{j,\mu}g(x)-P_{\mu}g(x)| > \epsilon\right\}\right) = 0,
        \end{equation}
        concluding the proof.
    \end{proof}

The following lemma will show a similar result to Lemma \ref{lem:boundOnIndivProbTerm}, but where $g$ is not only a function of $\mathcal{X}$ but is parametrized by $z \in \mathcal{X}$. In particular we will prove this for the kernel $k$. The proof follows exactly the same argument as that of Lemma \ref{lem:boundOnIndivProbTerm}. This is one of the lemmas that will be used when calculating the convergence rates of the spectral approximation.

\begin{lemma}
\label{lem:probBoundOnKernelWithKernel}
    Let $\epsilon > 0$, and consider $\delta > 0$ such that $\omega(\delta) \leq \frac{1}{16M}\epsilon$. Suppose $\mathcal{X}$ can be covered by $L_\delta = N(\mathcal{X},\delta, d)$ balls of radius $\delta > 0$. Then we have
    \begin{align}
        &\mathbb{P}
        \left(
            \left\{
                \sup_{x,z \in \mathcal{X}}
                    \left|
                        [P_{n, \mu} k(z,\cdot)](x)
                    -
                        [P_{\mu} k(z,\cdot)] (x)
                    \right|
                    >
                    \epsilon
            \right\}
        \right)\\
    \leq&
        2L_\delta
        \left(
        \exp\left(
            \frac{-n\epsilon^2}
            {32M^2 + \frac{8}{3}M\epsilon}
        \right)
        +
        L_\delta
                \exp\left(\frac{-n\epsilon^2}{32M^4 + \frac{8}{3}M^2\epsilon}\right)
        \right).
    \end{align}
    In particular we have
    \begin{equation}
        \mathbb{P}
        \left(
            \left\{
                \sup_{x,z \in \mathcal{X}}
                    \left|
                        [P_{n, \mu} k(z,\cdot)](x)
                    -
                        [P_{\mu} k(z,\cdot)] (x)
                    \right|
                    >
                    \epsilon
            \right\}
        \right)
    \leq
        4L_\delta^2
        \exp\left(\frac{-n\epsilon^2}{32M^4 + \frac{8}{3}M^2\epsilon}\right).
    \end{equation}
\end{lemma}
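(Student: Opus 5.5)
The plan is to repeat the covering argument of Lemma \ref{lem:boundOnIndivProbTerm}, with the free parameter $z$ controlled by a second covering. Fix a cover $\{B_\delta(x_i)\}_{i=1}^{L_\delta}$ of $\mathcal{X}$ and write
\begin{equation}
    F_n(x,z) := [P_{n,\mu}k(z,\cdot)](x) - [P_\mu k(z,\cdot)](x) = \frac{1}{n}\sum_{j=1}^n k(x,X_j)k(z,X_j) - \int_{\mathcal{X}} k(x,y)k(z,y)\,d\mu(y).
\end{equation}
This expression is symmetric in $(x,z)$, so any regularity we have in the first slot also holds in the second. I would first intersect with the good event
\begin{equation}
    G := \Big\{\forall i,\ \Big|\sum_{j=1}^n \overline{Z}_{j,\delta}(x_i)/n\Big| < \delta_\epsilon\Big\}.
\end{equation}
By \eqref{eq:bernsteinProb1} and a union bound over the $L_\delta$ centers, its complement has probability at most $2L_\delta\exp\!\left(\frac{-n\delta_\epsilon^2}{32M^2+\frac{8}{3}M\delta_\epsilon}\right)$. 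Choosing $\delta_\epsilon$ of order $\epsilon$ (at most $\epsilon$) then produces the first term of the bound.

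On $G$, take $(x,z)$ with $|F_n(x,z)|>\epsilon$ and pick $x_i$, $x_l$ with $x\in B_\delta(x_i)$ and $z\in B_\delta(x_l)$. I would move $x\to x_i$ and then $z\to x_l$ by the triangle inequality. The $x$-move costs
\begin{equation}
    \|k(z,\cdot)\|_{B(\mathcal{X})}\big(\delta_\epsilon + 2\omega(\delta)\big) \leq M\big(\delta_\epsilon+2\omega(\delta)\big),
\end{equation}
exactly as in the proof of Lemma \ref{lem:boundOnIndivProbTerm} with $g=k(z,\cdot)$. By the symmetry above, the $z$-move costs at most $M(\delta_\epsilon+2\omega(\delta))$ as well, again using the $Z_{j,\delta}(x_l)$ bounds on $G$ and condition \eqref{eq:maxDifferenceInBall}. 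The hypothesis $\omega(\delta)\le\epsilon/(16M)$ together with a choice such as $M\delta_\epsilon\le\epsilon/8$ makes the total cost at most $\epsilon/2$. Hence $|F_n(x_i,x_l)|>\epsilon/2$ for some pair among $L_\delta^2$ grid pairs.

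For each fixed grid pair, I would apply the second part of Lemma \ref{lem:estimateForDifference} with $g=k(x_l,\cdot)$, $\|g\|\le M$, and $\epsilon/2$, which gives $2\exp\!\left(\frac{-n\epsilon^2}{32M^4+\frac{8}{3}M^2\epsilon}\right)$. A union bound over the $L_\delta^2$ pairs yields the second term, $2L_\delta\cdot L_\delta\exp(\cdots)$. For the simplified bound, I would use $M\ge1$ and $L_\delta\ge1$, so that the first exponential is dominated by the second and $L_\delta\le L_\delta^2$, giving $4L_\delta^2\exp(\cdots)$.

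The main obstacle is bookkeeping with the constants. The choice of $\delta_\epsilon$ has to make the deterministic error terms sum to at most $\epsilon/2$ while the Bernstein exponent of the good event stays in the stated form $\frac{-n\epsilon^2}{32M^2+\frac83 M\epsilon}$. If the stated constant does not come out exactly, I would absorb the discrepancy by a slightly different split, for instance controlling $G$ with threshold proportional to $\epsilon$ and noting that $M\ge1$ leaves room.
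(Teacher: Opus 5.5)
Your route is the paper's route: the same cover, the same good event $G$ built from the $\overline{Z}_{j,\delta}$ at the centers, and the same two coordinate moves costing $M(\delta_\epsilon+2\omega(\delta))$ each. It also uses the same union bound over the $L_\delta^2$ grid pairs via Lemma~\ref{lem:estimateForDifference} with $g=k(x_l,\cdot)$ at level $\epsilon/2$, which correctly gives the term $2L_\delta^2\exp\bigl(-n\epsilon^2/(32M^4+\tfrac83M^2\epsilon)\bigr)$.

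The gap is the step you flagged yourself. Since $t\mapsto t^2/(32M^2+\tfrac83Mt)$ is increasing, the bound $2L_\delta\exp\bigl(-n\delta_\epsilon^2/(32M^2+\tfrac83M\delta_\epsilon)\bigr)$ from \eqref{eq:bernsteinProb1} gets \emph{worse} as $\delta_\epsilon$ decreases. So choosing $\delta_\epsilon\le\epsilon$ cannot produce the first term. Your constraint $M\delta_\epsilon\le\epsilon/8$ forces $\delta_\epsilon\le\epsilon/(8M)$, and there you only get $2L_\delta\exp\bigl(-n\epsilon^2/(2048M^4+\tfrac{64}{3}M^2\epsilon)\bigr)$. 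That is weaker than both exponentials in the statement. The remark that $M\ge1$ leaves room goes the wrong way, since a larger $M$ shrinks the admissible threshold further. The paper's proof makes the same silent replacement of $\delta_\epsilon$ by $\epsilon$ (as does the proof of Lemma~\ref{lem:boundOnIndivProbTerm}). So this is not a departure from it, but neither argument justifies the step.

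The repair is to use a sharper variance in Bernstein's inequality instead of the crude $16M^2$.
Since $0\le k\le M$, you have $Z_{j,\delta}(x)\in[0,M]$. Hence $|\overline{Z}_{j,\delta}(x)|\le M$ and $\mathrm{Var}(Z_{j,\delta}(x))\le M\,\mathbb{E}[Z_{j,\delta}(x)]\le M\omega(\delta)\le\epsilon/16$.
Take $\delta_\epsilon=\epsilon/(8M)$, so each move still costs at most $\epsilon/4$.
Theorem~\ref{thm:bernseteinsInequality} then gives $\mathbb{P}(G^c)\le 2L_\delta\exp\bigl(-n\delta_\epsilon^2/(\tfrac{\epsilon}{8}+\tfrac{\epsilon}{12})\bigr)=2L_\delta\exp\bigl(-3n\epsilon/(40M^2)\bigr)$.
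Moreover, $\tfrac{3\epsilon}{40M^2}\ge\tfrac{\epsilon^2}{32M^4+\frac83M^2\epsilon}$ exactly when $\epsilon\le3M^2$.
For $\epsilon\ge M^2$ the event is empty, because both $\frac1n\sum_jk(x,X_j)k(z,X_j)$ and $\int k(x,y)k(z,y)\,d\mu(y)$ lie in $[0,M^2]$.

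This proves the ``in particular'' bound $4L_\delta^2\exp\bigl(-n\epsilon^2/(32M^4+\tfrac83M^2\epsilon)\bigr)$, which is the form used in Corollary~\ref{cor:CrossedTermEstimate}. The first display, with exponent $32M^2+\tfrac83M\epsilon$ in its first term, comes out of this computation only for $\epsilon\le 12M/(5M-1)$, and trivially for $\epsilon\ge M^2$.
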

\begin{proof}
    Let $\{B_\delta(x_i)\}_{i \in \{1,...,L_\delta\}}$ be a cover of $\mathcal{X}$. Similarly to Lemma \ref{lem:boundOnIndivProbTerm}, we can show that
    \begin{align}
        &\mathbb{P}\left(
        \left\{
            \sup_{x,z \in \mathcal{X}}
            \left|
                [P_{n, \mu}[k(z,\cdot)](x)
            -
                [P_{\mu}k(z,\cdot)](x)
            \right|
        >
            \epsilon
        \right\}
        \right)\\
    \leq&
        \mathbb{P}\Bigg(
        \left\{
            \sup_{x,z \in \mathcal{X}}
            \left|
                [P_{n, \mu}k(z,\cdot)](x)
            -
                [P_{\mu}k(z,\cdot)](x)
            \right|
        >
            \epsilon
        \right\}
        \cap
        \left\{
            \forall {i \in \{1,...,L_\delta\}}
            \left|
            \sum_{j=1}^k
            \frac{\overline{Z}_{j,\delta}(x_i)}{k}
            \right|
            <
            \delta_\epsilon
        \right\}
        \Bigg)\\
    &\quad\quad
        +
            2L_\delta
            \exp\left(\frac{-n\delta_\epsilon^2}{32M^2 + \frac{8}{3}M\delta_\epsilon}\right).
    \end{align}

    We start by showing
    \begin{align}
    \label{eq:inclusionForKernelFamily}
        &\left\{
                \sup_{x,z \in \mathcal{X}}
                    \left|
                        [P_{n, \mu} k(z,\cdot)](x)
                    -
                        [P_{\mu} k(z,\cdot)] (x)
                    \right|
                    >
                    \epsilon
            \right\}
        \cap
        \left\{
            \forall {i \in \{1,...,L_\delta\}}
            \left|
            \sum_{j=1}^n
            \frac{\overline{Z}_{j,\delta}(x_i)}{n}
            \right|
            <
            \delta_\epsilon
        \right\}\\
        \subset&
            \bigcup_{i,j=1}^{L_\delta}
            \left\{
                    \left|
                        [P_{n, \mu} k(x_i,\cdot)](x_j)
                    -
                        [P_{\mu} k(x_i,\cdot)] (x_j)
                    \right|
                    >
                    \frac{\epsilon}{2}
            \right\}.
    \end{align}
    Consider $x \in \mathcal{X}$ such that
    \begin{align}
            \left|
                        [P_{n, \mu} k(z,\cdot)](x)
                    -
                        [P_{\mu} k(z,\cdot)] (x)
            \right|
        &>
            \epsilon,\\
            \forall {i \in \{1,...,L_\delta\}}
            \left|
            \sum_{j=1}^n
            \frac{\overline{Z}_{j,\delta}(x_i)}{n}
            \right|
        &<
            \delta_\epsilon
    \end{align}
    Since $\{B_\delta(x_i)\}_{i \in \{1,...,L_\delta\}}$ is a cover for $\mathcal{X}$, there exist $i,j \in \{1,...,L_\delta\}$ such that $x \in B_\delta(x_j)$ and $z \in B_\delta(x_i)$. Using triangle inequality we conclude 
    \begin{align}
        |
            [P_{n, \mu} k(x_i,\cdot)](x_j)
            -&
            [P_{\mu} k(x_i,\cdot)] (x_j)
        |
        >
        -\left|
            [P_{n, \mu} k(x_i,\cdot)](x_j) 
        -
            [P_{n, \mu} k(z, \cdot)](x_j)
        \right|\\
    &-
        \left|
            [P_{n, \mu} k(z,\cdot)](x_j) 
        -
            [P_{n, \mu} k(z, \cdot)] (x)
        \right|
    +
        \left|
            [P_{n, \mu} k(z, \cdot)] (x)
        -
            [P_{\mu} k(z, \cdot)] (x)
        \right|\\
    &-
        \left|
            [P_{\mu} k(z, \cdot)] (x)
        -
            [P_{\mu} k(z, \cdot)] (x_j)
        \right|
    -
        \left|
            [P_{\mu} k(z, \cdot)] (x_j)
        -
            [P_{\mu} k(x_i, \cdot)] (x_j)
        \right|.
    \end{align}
    Since $x \in B_\delta(x_j)$, using inequality \eqref{eq:maxDifferenceInBall}, we have
    \begin{align}
    \left|
        [P_{\mu} k(z, \cdot)] (x)
        -
        [P_{\mu} k(z, \cdot)] (x_j)
    \right|
&\leq
    \int_{\mathcal{X}}
        k(z,y)
        \left|
            k(x,y)
        -
            k(x_j,y)
        \right|
        d\mu(y)\\
&\leq
    \int_{\mathcal{X}}
        k(x,y)
        k_\delta(x_j; y)
        d\mu(y)\\
&\leq
    M\omega(\delta).
\end{align}
Similarly using the fact that $z \in B_\delta(x_i)$, then
\begin{equation}
    \left|
            [P_{\mu} k(z, \cdot)] (x_j)
        -
            [P_{\mu} k(x_i, \cdot)] (x_j)
    \right|
\leq
    M\omega(\delta).
\end{equation}
Due to $z \in B_\delta(x_i)$ and $\left|\sum_{l=1}^n\frac{\overline{Z}_{l,\delta}(x_i)}{n}\right| \leq \delta_\epsilon$, we have
\begin{align}
    \left|
            [P_{n, \mu} k(x_i,\cdot)](x_j) 
        -
            [P_{n, \mu} k(z, \cdot)](x_j)
    \right|
&\leq
    \left|
        \frac{1}{n}
        \sum_{l=1}^n
            k(x_j, X_l)
            k(x_i, X_l)
        -
            k(x_j, X_l)
            k(z, X_l)
    \right|\\
&\leq
        \frac{1}{n}
        \sum_{l=1}^n
        \left|
            k(x_j, X_l)
        \right|
        \left|
            k(x_i, X_l)
        -
            k(z, X_l)
        \right|\\
&\leq
    \frac{1}{n}M
        \sum_{l=1}^n
        k_\delta(x_i; X_l)\\
&=
    \frac{1}{n}M
    \sum_{l=1}^n
    \left(
        \overline{Z}_{l,\delta}(x_i)
    +
        \mathbb{E}[Z_{l,\delta}(x_i)]
    \right)\\
&\leq
    M\left(
        \delta_\epsilon
    +
        \omega(\delta)
    \right).
\end{align}
Using the same argument as above,
\begin{equation}
    \left|
            [P_{n, \mu} k(z,\cdot)](x_j) 
        -
            [P_{n, \mu} k(z, \cdot)] (x)
        \right|
\leq
    M(\delta_\epsilon + \omega(\delta)).
\end{equation}
From these equations we conclude
\begin{equation}
    |
        [P_{n, \mu} k(x_i,\cdot)](x_j)
        -
        [P_{\mu} k(x_i,\cdot)] (x_j)
    |
\geq
    \epsilon -4M\omega(\delta) - 2M\delta_\epsilon.
\end{equation}
If we choose $\delta_\epsilon,\delta > 0$ such that $\omega(\delta) \leq \frac{1}{16M}\epsilon$, $\delta_\epsilon < \frac{1}{8M}\epsilon$ and $\delta_\epsilon < \epsilon$, we conclude
\begin{equation}
    |
        [P_{n, \mu} k(x_i,\cdot)](x_j)
        -
        [P_{\mu} k(x_i,\cdot)] (x_j)
    |
>
    \frac{\epsilon}{2},
\end{equation}
showing equation \eqref{eq:inclusionForKernelFamily}. Now to conclude the proof, we can use Lemma \ref{lem:estimateForDifference} and inequality \eqref{eq:UpBoundKernel} to show
\begin{align}
    &\mathbb{P}\left(
        \bigcup_{i,j=1}^{L_\delta}
            \left\{
                    \left|
                        [P_{n, \mu} k(x_i,\cdot)](x_j)
                    -
                        [P_{\mu} k(x_i,\cdot)] (x_j)
                    \right|
                    >
                    \frac{\epsilon}{2}
            \right\}
    \right)\\
\leq&
    \sum_{i=1}^{L_\delta}
    2L_\delta
    \exp\left(
        \frac{-n\epsilon^2/4}
        {8\|k(x_i,\cdot)\|_{B(\mathcal{X})}^2M^2 + \frac{4}{3}\|k(x_i,\cdot)\|_{B(\mathcal{X})}M\epsilon/2}
    \right)\\
\leq&
    2L_\delta^2
    \exp\left(
        \frac{-n\epsilon^2}
        {32M^4 + \frac{8}{3}M^2\epsilon}
    \right),
\end{align}
finishing the first inequality. For the second use the fact that $M\geq 1$.
\end{proof}

Now we are going to apply the previous lemmas to the kernel $h_\mu$ defined by \eqref{eq:definitionOfH}. We do this  by showing that the kernel $h_\mu$ satisfies similar properties to $k$, the ones used in the previous proofs, and then we will apply the results directly.
\begin{lemma}
\label{lem:estimatesForH}
    Define the function given by
    \begin{equation}
    \label{eq:defOfTildeO}
        \tilde{\omega}(\delta) 
    = 
        \frac{\omega(\delta)}{2}
        \left(
            \frac{1}{a}
        +
            \frac{2M}{a^2}
        \right).
    \end{equation}
    Let $h_\mu$ be given by \eqref{eq:definitionOfH}. Then we have for all $x \in \mathcal{X}$
    \begin{equation}
    \label{eq:boundOnH1}
        \|
            (h_\mu)_\delta(x;\cdot)
        \|_{L^1(\mathcal{X})}
        \leq
        \tilde{\omega}(\delta),
    \end{equation}
    \begin{equation}
    \label{eq:boundOnH2}
        \sup_{x,y \in \mathcal{X}}|h_{\mu}(x,y)| 
    \leq  
        \frac{M}{a},
    \end{equation}
    \begin{equation}
    \label{eq:boundOnH3}
        \frac{a}{M}
    \leq
        \inf_{x \in \mathcal{X}}
            \|
                h_{\mu}(x,\cdot)
            \|_{L^1(\mathcal{X})}.
    \end{equation}
 That is, if $\mu \in \mathcal{U}_{a, \omega(\cdot), k}$ then $\mu \in \mathcal{U}_{\frac{a}{M}, \tilde{\omega}(\cdot), h_\mu}$.
\end{lemma}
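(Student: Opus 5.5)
The plan is to verify the three estimates directly from the definition $h_\mu(x,y)=\frac12 k(x,y)\bigl(\frac{1}{d_\mu(x)}+\frac{1}{d_\mu(y)}\bigr)$, using \eqref{eq:UpBoundKernel}, \eqref{eq:UpAndLowBound} and \eqref{eq:maxDifferenceInBall}. We will also use that $d_\mu=P_\mu 1$ is controlled by \eqref{eq:maxDifferenceInBall}.

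\textbf{The bounds \eqref{eq:boundOnH2} and \eqref{eq:boundOnH3}.} These are immediate. Since $0\le k\le M$ and $d_\mu\ge a$, we get $|h_\mu(x,y)|\le \frac12 M(\frac1a+\frac1a)=\frac Ma$. For the lower bound, $d_\mu(y)\le M$ for every $y$, because $\mu$ is a probability measure. Hence
\begin{equation}
h_\mu(x,y)\ge \tfrac12 k(x,y)\left(\tfrac{1}{M}+\tfrac{1}{M}\right)=\tfrac{k(x,y)}{M}.
\end{equation}
Integrating in $y$ gives $\|h_\mu(x,\cdot)\|_{L^1}\ge d_\mu(x)/M\ge a/M$.

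\textbf{The continuity estimate \eqref{eq:boundOnH1}.} Fix $x$, $z$, and $y\in B_\delta(x)$. We decompose
\begin{align}
2\bigl(h_\mu(x,z)-h_\mu(y,z)\bigr)
=&\bigl(k(x,z)-k(y,z)\bigr)\Bigl(\tfrac{1}{d_\mu(x)}+\tfrac{1}{d_\mu(z)}\Bigr)
+k(y,z)\Bigl(\tfrac{1}{d_\mu(x)}-\tfrac{1}{d_\mu(y)}\Bigr).
\end{align}
The first term is bounded by $\frac{2}{a}k_\delta(x;z)$. For the second term, we use the estimate from the proof of Proposition \ref{prop:imageInContinuousFunction} with $g=1$:
\begin{equation}
|d_\mu(x)-d_\mu(y)|\le \int k_\delta(x;w)\,d\mu(w)\le\omega(\delta).
\end{equation}
This gives $\bigl|\frac1{d_\mu(x)}-\frac1{d_\mu(y)}\bigr|\le \omega(\delta)/a^2$. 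Taking the supremum over $y\in B_\delta(x)$, dividing by two, and integrating in $z$, the first term contributes $\frac1a\omega(\delta)$. For the second term, the supremum over $y$ of $k(y,z)$ can be bounded crudely by $M$, which gives $\frac{M}{2a^2}\omega(\delta)$.

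This yields a total of $\frac{\omega(\delta)}{a}+\frac{M\omega(\delta)}{2a^2}$. That is not literally $\tilde\omega(\delta)=\frac{\omega(\delta)}{2}\bigl(\frac1a+\frac{2M}{a^2}\bigr)$, because the coefficient of the first term is off by a factor of two.

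\textbf{The main obstacle.} The delicate step is matching the exact constant in \eqref{eq:defOfTildeO}. I would try to recover it by splitting the symmetric factor more carefully. Write the difference as
\begin{equation}
\frac{k(x,z)}{d_\mu(x)}-\frac{k(y,z)}{d_\mu(y)}+\frac{k(x,z)-k(y,z)}{d_\mu(z)}.
\end{equation}
Then estimate $\bigl|\frac{k(x,z)}{d_\mu(x)}-\frac{k(y,z)}{d_\mu(y)}\bigr|\le \frac{k_\delta(x;z)}{a}+\frac{M\omega(\delta)}{a^2}$.

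If the constants still do not match exactly, I would note that any bound of the form $C(a,M)\,\omega(\delta)$ suffices for membership in $\mathcal{U}_{a/M,\tilde\omega,h_\mu}$. In that case I would adjust $\tilde\omega$ accordingly, for instance to $\omega(\delta)\bigl(\frac1a+\frac{M}{a^2}\bigr)$, since only the vanishing of $\tilde\omega$ as $\delta\to0$ and its linear dependence on $\omega$ are used later.

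Finally, \eqref{eq:UpAndLowBound} for $h_\mu$ holds with constant $a/M\le1$, and the upper bound holds with $M/a\ge1$. These, together with the continuity estimate, give the stated class membership.
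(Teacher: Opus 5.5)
Your argument is correct, and your first computation already proves \eqref{eq:boundOnH1} as stated. The ``main obstacle'' is illusory. After halving and integrating you obtained
\[
\|(h_\mu)_\delta(x;\cdot)\|_{L^1(\mathcal{X})}\le \omega(\delta)\Bigl(\frac1a+\frac{M}{2a^2}\Bigr),
\qquad
\tilde{\omega}(\delta)-\omega(\delta)\Bigl(\frac1a+\frac{M}{2a^2}\Bigr)=\frac{(M-a)\,\omega(\delta)}{2a^2}\ge 0 .
\]
The sign holds because $a\le 1\le M$, and it also follows directly from $a<\inf_x d_\mu(x)\le\sup_{x,y}k(x,y)<M$. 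The lemma only asserts an upper bound by $\tilde{\omega}$, so no constant has to match exactly.

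Drop the fallback of redefining $\tilde{\omega}$. It would prove a weaker statement than the one given. It is also not true that only the vanishing of $\tilde{\omega}$ matters later: its explicit form fixes $C_\delta$, and hence the constant $\tilde{C}$, in the proof of Theorem \ref{thm:rateOfConvergenceTheorem}.

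Compared with the paper, the decomposition is essentially the same. The paper uses exactly the split in your last display. It then writes $\frac{k(x,z)}{d_\mu(x)}-\frac{k(y,z)}{d_\mu(y)}$ over the common denominator $d_\mu(x)d_\mu(y)$, and bounds both numerator pieces $k(x,z)\bigl(d_\mu(y)-d_\mu(x)\bigr)$ and $\bigl(k(x,z)-k(y,z)\bigr)d_\mu(x)$ with the same crude factor $M/a^2$. That gives $\frac1a+\frac{2M}{a^2}$ before halving.

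You instead bound $|k(x,z)-k(y,z)|/d_\mu(\cdot)$ by $\frac1a k_\delta(x;z)$, which gives $\frac2a+\frac{M}{a^2}$. Your ``more careful'' second splitting yields this same total. It is smaller than the paper's by $\frac{M-a}{a^2}\,\omega(\delta)$, which is why you could not reproduce the paper's constant: yours is sharper.

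The bounds \eqref{eq:boundOnH2} and \eqref{eq:boundOnH3} are proved exactly as in the paper.
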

\begin{proof}
    For equation \eqref{eq:boundOnH1} we use inequalities \eqref{eq:UpBoundKernel}, \eqref{eq:UpAndLowBound} and \eqref{eq:maxDifferenceInBall} to conclude for all $x \in \mathcal{X}$
    \begin{align}
        2
        \|(h_\mu)_\delta(x;\cdot)\|_{L^1(\mathcal{X})}
        &=
        2
        \int_{\mathcal{X}}
            \sup_{y \in B_\delta(x)}
            \left|
                h_{\mu}(x,z)
            -
                h_{\mu}(y,z)
            \right|
            d\mu(z)\\
    &\leq
        \int_\mathcal{X}
            \sup_{y \in B_\delta(x)}
            \left|
                \left(
                    \frac{k(x,z)}
                    {d_{\mu}(x)}
                +
                    \frac{k(x,z)}
                    {d_{\mu}(z)}
                \right)
            -
                \left(
                    \frac{k(y,z)}
                    {d_{\mu}(y)}
                +
                    \frac{k(y,z)}
                    {d_{\mu}(z)}
                \right)
            \right|
            d\mu(z)\\
    &\leq
        \int_\mathcal{X}
            \sup_{y \in B_\delta(x)}
            \left(
            \left|
                \frac{k(x,z) - k(y,z)}
                {d_{\mu}(z)}
            \right|
            +
            \left|
                    \frac{k(x,z)}
                    {d_{\mu}(x)}
            -
                    \frac{k(y,z)}
                    {d_{\mu}(y)}
            \right|
            \right)
            d\mu(z)\\
    &\leq
        \int_\mathcal{X}
            \sup_{y \in B_\delta(x)}
            \left(
            \left|
                \frac{k(x,z) - k(y,z)}
                {d_{\mu}(z)}
            \right|
            +
            \left|
                    \frac{k(x,z)d_{\mu}(y)}
                    {d_{\mu}(x)d_{\mu}(y)}
            -
                    \frac{k(y,z)d_{\mu}(x)}
                    {d_{\mu}(y)d_{\mu}(x)}
            \right|
            \right)
            d\mu(z)\\
    &\leq
        \frac{\omega(\delta)}
        {a}
    +
    \int_\mathcal{X}
        \sup_{y \in B_\delta(x)}
            \left|
                    \frac{k(x,z)(d_{\mu}(y)-d_{\mu}(x))}
                    {d_{\mu}(x)d_{\mu}(y)}
                -
                    \frac{(k(x,z)-k(y,z))d_{\mu}(x)}
                    {d_{\mu}(y)d_{\mu}(x)}
            \right|
            d\mu(z)\\
    &\leq
        \frac{\omega(\delta)}
        {a}
    +
        \frac{M}{a^2}
        \int_\mathcal{X}
            \sup_{y \in B_\delta(x)}
                \left|
                    d_{\mu}(y)
                -
                    d_{\mu}(x)
                \right|
            +
                \left|
                    k(x,z)
                -
                    k(y,z)
                \right|
            d\mu(z)\\
    &\leq
        \omega(\delta)
        \left(
            \frac{1}{a}
        +
            \frac{2M}{a^2}
        \right).
    \end{align}
    The bound \eqref{eq:boundOnH2} follows directly from the definitions. For inequality \eqref{eq:boundOnH3}, given $x \in \mathcal{X}$ we have
    \begin{align}
        2
        \|
            h_{\mu}(x,\cdot)
        \|_{L^1(\mathcal{X})}
    &=
        \int_\mathcal{X}
        \left|
            \frac{k(x,z)}
            {d_{\mu}(x)}
        +
            \frac{k(x,z)}
            {d_\mu(z)}
        \right|
        d\mu(z)
    \geq
        \frac{2}{M}
        \int_\mathcal{X}
        k(x,z)
        d\mu(z)
    \geq
        \frac{2a}{M}.
    \end{align}
\end{proof}

\begin{corollary}
\label{cor:CrossedTermEstimate}
    Let $\epsilon >0$, and consider $\delta > 0$ such that $\tilde{\omega}(\delta) \leq \frac{a}{16M}\epsilon$. Suppose $\mathcal{X}$ can be covered by $L_\delta = N(\mathcal{X},\delta, d)$ balls of radius $\delta > 0$. Then we have
    \begin{equation}
        \mathbb{P}\left(
            \left\{
                \sup_{x,z \in \mathcal{X}}
                \left|
                    [T_{n,\mu}h_\mu(z,\cdot)](x)
                -
                    [T_\mu h_\mu(z,\cdot)](x)
                \right|
            \right\}
        \right)
    \leq
        4L_\delta^2
        \exp
        \left(
            \frac{-na^2\epsilon^2}
            {32M^4/a^2 + \frac{8}{3}M^2\epsilon}
        \right).
    \end{equation}
\end{corollary}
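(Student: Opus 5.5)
The plan is to derive the corollary directly from Lemma \ref{lem:probBoundOnKernelWithKernel}, applied with the kernel $h_\mu$ in place of $k$. Lemma \ref{lem:estimatesForH} says that if $\mu\in\mathcal{U}_{a,\omega(\cdot),k}$, then $\mu\in\mathcal{U}_{a/M,\tilde\omega(\cdot),h_\mu}$. Moreover $h_\mu$ is symmetric and non-negative, with $\sup|h_\mu|\le M/a$, which plays the role of the constant $M$ in \eqref{eq:UpBoundKernel}.

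For this kernel, $T_{n,\mu}$ and $T_\mu$ are exactly the operators $P_{n,\mu}$ and $P_\mu$, so
\begin{equation}
[T_{n,\mu}h_\mu(z,\cdot)](x)-[T_\mu h_\mu(z,\cdot)](x)
=\frac1n\sum_{l=1}^n h_\mu(x,X_l)h_\mu(z,X_l)-\int_{\mathcal{X}} h_\mu(x,y)h_\mu(z,y)\,d\mu(y).
\end{equation}
Note that the proof of Lemma \ref{lem:probBoundOnKernelWithKernel} never used \eqref{eq:UpAndLowBound}. It only used three things:
\begin{enumerate}
\item the sup bound $M$ on the kernel;
\item the continuity modulus \eqref{eq:maxDifferenceInBall};
\item Bernstein's inequality through Lemma \ref{lem:estimateForDifference}, with variables bounded in terms of $M$.
\end{enumerate}
We therefore rerun that proof verbatim with $M$ replaced by $M' := M/a$ and $\omega$ replaced by $\tilde\omega$.

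The steps are as follows. First, cover $\mathcal{X}$ by $L_\delta$ balls $B_\delta(x_i)$. Define $\tilde Z_{j,\delta}(x)=(h_\mu)_\delta(x;X_j)$. By \eqref{eq:boundOnH1} its mean is at most $\tilde\omega(\delta)$, and its centred version is bounded by $4M'$ with variance at most $16M'^2$. Bernstein then gives the analogue of \eqref{eq:bernsteinProb1} with $M'$ in place of $M$.

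Second, repeat the five-term triangle inequality from the proof of Lemma \ref{lem:probBoundOnKernelWithKernel}. This yields the inclusion into $\bigcup_{i,j}\{|\cdots|(x_i,x_j)>\epsilon/2\}$, provided
\begin{equation}
\epsilon-4M'\tilde\omega(\delta)-2M'\delta_\epsilon\ge \epsilon/2 .
\end{equation}
This holds under the hypothesis $\tilde\omega(\delta)\le \frac{a}{16M}\epsilon=\frac{\epsilon}{16M'}$ together with $\delta_\epsilon<\epsilon/(8M')$ and $\delta_\epsilon<\epsilon$.

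Third, apply the pointwise Bernstein bound of Lemma \ref{lem:estimateForDifference} at the $L_\delta^2$ pairs $(x_i,x_j)$, with $g=h_\mu(x_i,\cdot)$ and $\|g\|\le M'$. This gives the bound
\begin{equation}
2L_\delta^2\exp\bigl(-n\epsilon^2/(32M'^4+\tfrac83M'^2\epsilon)\bigr).
\end{equation}
The remaining term, $2L_\delta\exp(\cdots)$, comes from the $\tilde Z$ events. Since $M'\ge 1$ may fail, one should check which denominator dominates; using $M\ge1\ge a$ we get $M'=M/a\ge 1$, so it is absorbed exactly as at the end of Lemma \ref{lem:probBoundOnKernelWithKernel}, giving $4L_\delta^2\exp\bigl(-n\epsilon^2/(32M'^4+\tfrac83 M'^2\epsilon)\bigr)$.

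The main obstacle is purely bookkeeping: matching the exponent stated in the corollary, $-na^2\epsilon^2/(32M^4/a^2+\tfrac83M^2\epsilon)$. With $M'=M/a$ the natural denominator is $32M^4/a^4+\tfrac83 M^2\epsilon/a^2$. Multiplying numerator and denominator by $a^2$ gives $-na^2\epsilon^2/(32M^4/a^2+\tfrac83M^2\epsilon)$, which is exactly the stated form. So the only care needed is to track the powers of $a$ consistently, and to read the event in the statement as $\{\sup_{x,z}|\cdots|>\epsilon\}$, since the ``$>\epsilon$'' is missing from the statement as written.
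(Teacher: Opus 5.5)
Your proposal is correct and is essentially the paper's proof. The paper also invokes Lemma~\ref{lem:estimatesForH} to place $\mu$ in $\mathcal{U}_{a/M,\tilde\omega(\cdot),h_\mu}$ with $\sup|h_\mu|\le M/a$, then applies Lemma~\ref{lem:probBoundOnKernelWithKernel} to $h_\mu$ with $M'=M/a$; your rescaling of the exponent by $a^2$, your check $M'\ge 1$, and your reading of the event as $\{\sup_{x,z}|\cdots|>\epsilon\}$ are all right and more explicit than the paper. One caution if you literally rerun that lemma's proof instead of citing its statement: the term from the $\tilde Z$-events carries $\delta_\epsilon<\epsilon/(8M')$ rather than $\epsilon$ in its exponent, so it is not absorbed as claimed. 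This gap is inherited from the paper's lemma, and it can be closed by using $\mathrm{Var}(\tilde Z)\le 2M'\tilde\omega(\delta)\le \epsilon/8$ in Bernstein's inequality, together with the trivial case $\epsilon\ge M'^2$, where the event is empty.
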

\begin{proof}
    Using Lemma \ref{lem:estimatesForH} we have that $\mu \in \mathcal{U}_{\frac{a}{M}, \tilde{\omega}(\cdot), h_\mu}$, and
    \begin{equation}
        \sup_{x,y \in \mathcal{X}} |h_\mu(x,y)| 
    \leq
        \frac{M}{a}.
    \end{equation}
    With this we can apply Lemma \ref{lem:probBoundOnKernelWithKernel} for the kernel $h_\mu$ to conclude that
    \begin{align}
        \mathbb{P}\left(
            \left\{
                \sup_{x,z \in \mathcal{X}}
                \left|
                    [T_{n,\mu}h_\mu(z,\cdot)](x)
                -
                    [T_\mu h_\mu(z,\cdot)](x)
                \right|
            \right\}
        \right)
    \leq&
        4L_\delta^2
        \exp
        \left(
            \frac{-na^2\epsilon^2}
            {32M^4/a^2 + \frac{8}{3}M^2\epsilon}
        \right)\\
    \leq&
        4L_\delta^2
        \exp
        \left(
            -\frac{n\epsilon^2}
            {32(M/a)^4 + \frac{8}{3}(M/a)^2\epsilon}
        \right)\\
    \leq&
        4L_\delta^2
        \exp
        \left(
            -\frac{na^2\epsilon^2}
            {32M^4/a^2 + \frac{8}{3}M^2\epsilon}
        \right).
    \end{align}
\end{proof}

    \begin{remark}
    \label{rem:averageKernelRemark}
     Since $\mu \in \mathcal{U}_{a, \omega(\cdot), k}$ then by Lemma \ref{lem:estimatesForH} $\mu \in \mathcal{U}_{\frac{1}{Ma}, \tilde{\omega}(\cdot), h_\mu}$. We can thus use Proposition \ref{prop:LawOfLargeNumber} to conclude that given $g \in B(\mathcal{X})$, then $g\cdot \mathcal{H}$ is also a Glivenko-Cantelli class. By Lemma \ref{lem:estimatesForH}, $h_\mu$ satisfies the same conditions as $k$.Thus
    \begin{equation}
        \sup_{x \in \mathcal{X}}
        \left|
            \int_{\mathcal{X}}
            h_{\mu}(x,y)
            g(y)
            \left(
                d\mu_n(y)
                -
                d\mu(y)
            \right)
        \right|
    \end{equation}
    converges to zero almost surely using Proposition \ref{prop:LawOfLargeNumber}.
\end{remark}

\begin{lemma}
\label{lem:boundForDifferenceOfDifferences}
    Given $\delta > 0$, $x \in \mathcal{X}$ let $z \in B_\delta(x)$. Then
    \begin{equation}
        \left|
            k_\delta(x;y)
        -
            k_\delta(z;y)
        \right|
    \leq
        3k_{2\delta}(x;y).
    \end{equation}      
\end{lemma}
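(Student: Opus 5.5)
We prove the bound by writing both quantities as suprema over balls and comparing them through the larger ball $B_{2\delta}(x)$. Since $z\in B_\delta(x)$, the triangle inequality gives $B_\delta(z)\subset B_{2\delta}(x)$, and trivially $B_\delta(x)\subset B_{2\delta}(x)$. By definition both $k_\delta(x;y)$ and $k_\delta(z;y)$ are non-negative, so
\begin{equation}
    \left|k_\delta(x;y)-k_\delta(z;y)\right|\leq k_\delta(x;y)+k_\delta(z;y).
\end{equation}
It therefore suffices to bound each term on the right by a multiple of $k_{2\delta}(x;y)$, with total constant at most $3$.

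The first term is direct: since $B_\delta(x)\subset B_{2\delta}(x)$, taking the supremum over the larger ball gives $k_\delta(x;y)\leq k_{2\delta}(x;y)$.

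For the second term, fix any $w\in B_\delta(z)$ and pass through the centre $x$:
\begin{equation}
    |k(z,y)-k(w,y)|\leq |k(z,y)-k(x,y)|+|k(x,y)-k(w,y)|.
\end{equation}
Both $z$ and $w$ lie in $B_{2\delta}(x)$, so each summand on the right is at most $k_{2\delta}(x;y)$. Taking the supremum over $w$ yields $k_\delta(z;y)\leq 2k_{2\delta}(x;y)$. Adding the two bounds gives the constant $3$.

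The only delicate point is the ball inclusion $B_\delta(z)\subset B_{2\delta}(x)$. With open balls, if $d(w,z)<\delta$ and $d(z,x)<\delta$, then $d(w,x)<2\delta$, so the inclusion holds. With closed balls the same argument gives $d(w,x)\leq 2\delta$, and the inclusion still holds provided $B_{2\delta}$ is interpreted consistently. Beyond this, the argument uses only the non-negativity of $k_\delta$ and the triangle inequality, so no probabilistic or measure-theoretic input is needed.
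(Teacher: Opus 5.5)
Your proof is correct and is essentially the paper's argument. The paper bounds $\bigl|(k(x,y)-k(w,y))-(k(z,y)-k(v,y))\bigr|$ for $w\in B_\delta(x)$, $v\in B_\delta(z)\subset B_{2\delta}(x)$ by the same three terms $|k(x,y)-k(w,y)|+|k(x,y)-k(z,y)|+|k(x,y)-k(v,y)|$ and then takes suprema. Your ordering (bound each supremum first, then use $|A-B|\le A+B$ for nonnegative $A,B$) is slightly cleaner because it avoids passing a pointwise bound through two suprema, and using $|A-B|\le\max(A,B)$ instead would even give the constant $2$.
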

\begin{proof}
    Consider $x \in \mathcal{X}$, $z \in B_\delta(x)$, $w \in B_\delta(x)$ and $v \in B_\delta(z)$. By triangle inequality  we have $v \in B_{2\delta}(x)$. Then we have
    \begin{align}
    &
        \left|
        (
            k(x,y)
        -
            k(w,y)
        )
    -
        (
            k(z,y)
        -
            k(v,y)
        )
        \right|\\
    \leq&
        \left|
            k(x,y)
        -
            k(z,y)
        \right|
    +
        \left|
            k(x,y)
        -
            k(z,y)
        \right|
    +
        \left|
        k(v,y)
    -
        k(x,y)
        \right|\\
    \leq&
        3k_{2\delta}(x,y).
    \end{align}
    Taking the supremum over $w \in B_\delta(x)$ and $v \in B_\delta(z)$ concludes the proof.
\end{proof}
Now we are going to make an estimate that is true almost surely, and will be useful in the next section for proving the collective compact convergence. The proof follows again the same structure of the previous results, namely Lemma \ref{lem:boundOnIndivProbTerm}.

\begin{lemma}
    \label{lem:differenceOfKernelEstimate}
        Given $\delta>0$ we have
        \begin{equation}
            \mathbb{P}
            \left(
                \left\{
                    \limsup_{n\rightarrow \infty}
                    \sup_{x \in \mathcal{X}}
                    \left|
                    \int_\mathcal{X}
                    k_\delta(x;y)
                        \left(
                            d\mu_n(y)
                            -
                            d\mu(y)
                        \right)
                    \right|
                \leq
                    10\omega(2\delta)
                \right\}
            \right)
            =
            1.
        \end{equation}
\end{lemma}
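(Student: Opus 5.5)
We follow the structure of Lemma \ref{lem:boundOnIndivProbTerm}, but now the family being controlled is $\{k_\delta(x;\cdot) : x \in \mathcal{X}\}$ rather than $\{k(x,\cdot)\}$. Fix a finite cover $\{B_\delta(x_i)\}_{i=1}^{L_\delta}$ of $\mathcal{X}$ by balls of radius $\delta$; it exists since $\mathcal{X}$ is compact. The idea is to reduce the supremum over $x \in \mathcal{X}$ to finitely many centres $x_i$. At the centres, the strong law of large numbers (or Bernstein's inequality, Theorem \ref{thm:bernseteinsInequality}, together with Borel--Cantelli) gives almost sure convergence.

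\textbf{Finitely many almost sure events.} For each centre $x_i$ we use two random variables. The first is $Z_{j,\delta}(x_i)=k_\delta(x_i;X_j)$, which is bounded by $2M$. The second is $Z_{j,2\delta}(x_i)=k_{2\delta}(x_i;X_j)$, also bounded by $2M$. Each has mean bounded by $\omega(\delta)$, respectively $\omega(2\delta)$, by \eqref{eq:averageIsAnulli}. By Lemma \ref{lem:estimateForDifference} and Borel--Cantelli, almost surely for every $i$
\begin{equation}
\frac1n\sum_{j=1}^n k_\delta(x_i;X_j)\to \|k_\delta(x_i;\cdot)\|_{L^1}, \qquad
\frac1n\sum_{j=1}^n k_{2\delta}(x_i;X_j)\to \|k_{2\delta}(x_i;\cdot)\|_{L^1}.
\end{equation}
Since there are only finitely many $i$, the intersection of these events still has probability one.

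\textbf{Transfer to an arbitrary point.} Given $x\in\mathcal{X}$, pick $i$ with $x\in B_\delta(x_i)$. By Lemma \ref{lem:boundForDifferenceOfDifferences} with the roles of the points arranged so that $x\in B_\delta(x_i)$, we have $|k_\delta(x;y)-k_\delta(x_i;y)|\le 3k_{2\delta}(x_i;y)$ for every $y$. Split
\begin{align}
\left|\int k_\delta(x;y)(d\mu_n-d\mu)(y)\right|
\le& \int |k_\delta(x;y)-k_\delta(x_i;y)|\,d\mu_n(y) + \left|\int k_\delta(x_i;y)(d\mu_n-d\mu)(y)\right| \\
&+ \int |k_\delta(x;y)-k_\delta(x_i;y)|\,d\mu(y).
\end{align}
The third term is at most $3\omega(2\delta)$ by \eqref{eq:maxDifferenceInBall}. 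The first term is at most $3\int k_{2\delta}(x_i;y)\,d\mu_n(y)$, whose $\limsup$ is almost surely at most $3\omega(2\delta)$. The middle term tends to $0$ almost surely. These bounds are uniform in $x$, because only finitely many $i$ occur. Taking $\sup_x$ and then $\limsup_n$ gives $6\omega(2\delta)\le 10\omega(2\delta)$, where we use that $\omega$ is increasing. The extra slack in the constant $10$ allows cruder bookkeeping, for instance using $k_{2\delta}$ in place of $k_\delta$ at the centres.

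\textbf{Main obstacle.} The delicate point is applying Lemma \ref{lem:boundForDifferenceOfDifferences} in the correct direction. That lemma controls $|k_\delta(x;\cdot)-k_\delta(z;\cdot)|$ by $k_{2\delta}$ evaluated at the \emph{centre} point, which is not symmetric in its arguments. We therefore need it centred at the fixed $x_i$, so that the empirical average of $k_{2\delta}(x_i;\cdot)$ is controlled by one of the finitely many almost sure events above. If the lemma only yields the bound centred at $x$, we would instead use $k_{2\delta}(x;y)\le 2k_{3\delta}(x_i;y)$ for $x\in B_\delta(x_i)$, applying the triangle inequality through $x_i$. We would then choose the cover radius smaller, say $\delta/2$, so that the constants and radii fit within $10\omega(2\delta)$.

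A second point to check is measurability of $k_\delta(x;\cdot)$ and of the supremum over $x$. This is handled exactly as in the earlier lemmas: we only ever evaluate the random quantities at the countably many, in fact finitely many, points $x_i$.
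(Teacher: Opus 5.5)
Your argument is correct and is essentially the paper's proof. Both use a finite $\delta$-cover, apply Lemma \ref{lem:boundForDifferenceOfDifferences} with $x_i$ as the centre and $z=x$, and control the empirical averages of $k_\delta(x_i;\cdot)$ and $k_{2\delta}(x_i;\cdot)$ almost surely at the finitely many centres via Bernstein and Borel--Cantelli. The only difference is that the paper works with event inclusions and summable tail probabilities, while you argue with pathwise $\limsup$s; your bookkeeping even gives $6\omega(2\delta)$.

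The ``main obstacle'' you flag is not one. Since $d(x,x_i)<\delta$ is symmetric, the lemma applies directly centred at $x_i$, so your fallback is unnecessary. As sketched, that fallback would in fact bring in $\omega(5\delta/2)$ rather than $\omega(2\delta)$.
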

\begin{proof}
    As in the proof of Proposition \ref{prop:LawOfLargeNumber} we use continuity of the measure to obtain
    \begin{align}
        &\mathbb{P}
            \left(
                \left\{
                    \limsup_n
                    \sup_{x \in \mathcal{X}}
                    \left|
                    \int_\mathcal{X}
                    k_\delta(x;y)
                        \left(
                            d\mu_n(y)
                            -
                            d\mu(y)
                        \right)
                    \right|
                \leq
                    10\omega(2\delta)
                \right\}
            \right)\\
        =&
            1
        -
            \mathbb{P}
            \left(
                \left\{
                    \limsup_n
                    \sup_{x \in \mathcal{X}}
                    \left|
                    \int_\mathcal{X}
                    k_\delta(x;y)
                        \left(
                            d\mu_n(y)
                            -
                            d\mu(y)
                        \right)
                    \right|
                >
                    10\omega(2\delta)
                \right\}
            \right)\\
        =&
            1
        -
            \lim_n
            \mathbb{P}
            \left(
                \bigcup_{j \geq n}
                \left\{
                    \sup_{x \in \mathcal{X}}
                    \left|
                    \int_\mathcal{X}
                    k_\delta(x;y)
                        \left(
                            d\mu_j(y)
                            -
                            d\mu(y)
                        \right)
                    \right|
                >
                    10\omega(2\delta)
                \right\}
            \right)\\
        \geq&
            1
            -
            \lim_n
            \sum_{j \geq n}
            \mathbb{P}
            \left(
                \left\{
                    \sup_{x \in \mathcal{X}}
                    \left|
                    \int_\mathcal{X}
                    k_\delta(x;y)
                        \left(
                            d\mu_j(y)
                            -
                            d\mu(y)
                        \right)
                    \right|
                >
                    10\omega(2\delta)
                \right\}
            \right).
    \end{align}
    Again similarly to Proposition \eqref{prop:LawOfLargeNumber}, we show that 
    \begin{equation}
        \lim_n
            \sum_{j \geq n}
            \mathbb{P}
            \left(
                \left\{
                    \sup_{x \in \mathcal{X}}
                    \left|
                    \int_\mathcal{X}
                    k_\delta(x;y)
                        \left(
                        d\mu_j(y)
                        -
                        d\mu(y)
                        \right)
                    \right|
                >
                    10\omega(2\delta)
                \right\}
            \right)
        =
            0.
    \end{equation}
    Given $\epsilon > 0$ let $\delta_\epsilon > 0$ (to be chosen later), and consider $\{B_{\delta}(x_i)\}_{i \in \{1,...,L_{\delta}\}}$ a cover of $\mathcal{X}$. Given $\delta_\epsilon > 0$, using Lemma \ref{lem:estimateForDifference} we have
    \begin{align}
        &
        \lim_n
            \sum_{j \geq n}
            \mathbb{P}
            \left(
                \left\{
                    \sup_{x \in \mathcal{X}}
                    \left|
                    \int_\mathcal{X}
                    k_\delta(x;y)
                        \left(
                        d\mu_j(y)
                        -
                        d\mu(y)
                        \right)
                    \right|
                >
                    10\omega(2\delta)
                \right\}
            \right)\\
        =&
        \lim_n
            \sum_{j \geq n}
            \mathbb{P}
            \left(
                \left\{
                    \sup_{x \in \mathcal{X}}
                    \left|
                    \int_\mathcal{X}
                    k_\delta(x;y)
                        \left(
                        d\mu_j(y)
                        -
                        d\mu(y)
                        \right)
                    \right|
                >
                    10\omega(2\delta)
                \right\}
                \cap
                \left\{
                \forall_{i\in\{1,...,L_\delta\}}
                \left|
                    \sum_{l = 1}^j \frac{\overline{Z}_{l, 2\delta}(x_i)}{j}
                \right|
            \leq
        \delta_\epsilon
        \right\}
            \right)\\
        &\quad\quad+
        \sum_{j\geq n}
            \sum_{i=1}^{L_\delta}
            \mathbb{P}
            \left(
                \left|
                    \sum_{j = 1}^j \frac{\overline{Z}_{j, 2\delta}(x_i)}{j}
                \right|
            >
            \delta_\epsilon
            \right)\\
        \leq&
            \lim_n
            \sum_{j \geq n}
            \mathbb{P}
            \left(
                \left\{
                    \sup_{x \in \mathcal{X}}
                    \left|
                    \int_\mathcal{X}
                    k_\delta(x;y)
                        \left(
                        d\mu_j(y)
                        -
                        d\mu(y)
                        \right)
                    \right|
                >
                    10\omega(2\delta)
                \right\}
                \cap
                \left\{
                \forall_{i\in\{1,...,L_\delta\}}
                \left|
                    \sum_{l = 1}^j \frac{\overline{Z}_{l, 2\delta}(x_i)}{j}
                \right|
            \leq
        \delta_\epsilon
        \right\}
            \right)\\
        &\quad\quad+
        \lim_n
        \sum_{j\geq n}
            \sum_{i=1}^{L_\delta}
            2\exp
            \left\{
                \frac{
                -j\delta_\epsilon^2
                }
                {32M^2 + \frac{8}{3}M\delta_\epsilon}
            \right\}\\
        =&
            \lim_n
            \sum_{j \geq n}
            \mathbb{P}
            \left(
                \left\{
                    \sup_{x \in \mathcal{X}}
                    \left|
                    \int_\mathcal{X}
                    k_\delta(x;y)
                        \left(
                        d\mu_j(y)
                        -
                        d\mu(y)
                        \right)
                    \right|
                >
                    10\omega(2\delta)
                \right\}
                \cap
                \left\{
                \forall_{i\in\{1,...,L_\delta\}}
                \left|
                    \sum_{l = 1}^j \frac{\overline{Z}_{l, 2\delta}(x_i)}{j}
                \right|
            \leq
        \delta_\epsilon
        \right\}
            \right).
    \end{align}
    Now we prove that, with $\delta_\epsilon < \omega(2\delta)$ we have
    \begin{align}
\label{eq:intersectionOfConditions2}
        \Big\{
            \sup_{x \in \mathcal{X}}
                    &
                    \left|
                    \int_\mathcal{X}
                    k_\delta(x;y)
                        \left(
                        d\mu_j(y)
                        -
                        d\mu(y)
                        \right)
                    \right|
            >
                10\omega(2\delta)
        \Big\}
    \cap
        \left\{
            \forall_{i\in\{1,...,L_\delta\}}
            \left|
                \sum_{l = 1}^j \frac{\overline{Z}_{l, \delta}(x_i)}{j}
            \right|
        \leq
    \delta_\epsilon
        \right\}\\
    &\subset
        \left\{
            \max_{i \in \{1,...,L_\delta\}}
                \left|
                    \int_\mathcal{X}
                    k_\delta(x_i;y)
                        \left(
                            d\mu_j(y)
                        -
                            d\mu(y)
                        \right)
                    \right|
            >
                4\omega(2\delta)
        \right\}.
        \label{eq:inclusionInFiniteSet2}
    \end{align}
    Indeed given $x \in \mathcal{X}$ such that 
    \begin{equation}
        \left|
                    \int_\mathcal{X}
                    k_\delta(x;y)
                        \left(
                        d\mu_j(y)
                        -
                        d\mu(y)
                        \right)
                    \right|
            >
                10\omega(2\delta)
    \end{equation}
    there exists $i \in \{1,...,L_{\delta}\}$ such that $x \in B_{\delta}(x_i)$. Then by triangle inequality, equation \eqref{eq:maxDifferenceInBall} and Lemma \ref{lem:boundForDifferenceOfDifferences} we conclude
    \begin{align}
        \left|
                    \int_\mathcal{X}
                    k_\delta(x_i;y)
                    \left(
                        d\mu_j(y)
                    -
                        d\mu(y)
                    \right)
        \right|
    &\geq
        -
                \left|
                    \int_\mathcal{X}
                    \left(
                    k_\delta(x;y)
                    -
                    k_\delta(x_i;y)
                    \right)
                        \left(
                        d\mu_j(y)
                    -
                        d\mu(y)
                        \right)
                \right|\\
    &\quad\quad+
        \left|
            \int_\mathcal{X}
                k_\delta(x;y)\left(d\mu_j(y)-d\mu(y)\right)
        \right|\\
    &\geq
        -
            \int_\mathcal{X}
                    \left|
                    k_\delta(x;y)
                    -
                    k_\delta(x_i;y)
                    \right|
                        d\mu_j(y)\\
    &\quad\quad-
            \int_\mathcal{X}
                \left(
                    k_\delta(x;y)
                    +
                    k_\delta(x_i;y)
                \right)
                d\mu(y)
        +10\omega(2\delta)\\
    &\geq
        -
            3\int_\mathcal{X}
                k_{2\delta}(x_i;y)
                        d\mu_j(y) 
        -2\omega(2\delta)
        +10\omega(2\delta)\\
    &\geq
        -3\left|
            \int_\mathcal{X} k_{2\delta}(x_i;y)\left(d\mu_j(y)-d\mu(y)\right)
            \right|
        -
        \int_\mathcal{X} k_{2\delta}(x_i;y)d\mu(y)
        +
        8\omega(2\delta)\\
    &\geq
        -3\omega(2\delta)
        -\omega(2\delta)
        +8\omega(2\delta)
    \geq
        4\omega(2\delta).
    \end{align}
    Thus applying Lemma \ref{lem:estimateForDifference} we have
    \begin{align}
        &\lim_n
            \sum_{j \geq n}
            \mathbb{P}
            \left(
                \left\{
                    \sup_{x \in \mathcal{X}}
                    \left|
                    \int_\mathcal{X}
                    k_\delta(x;y)
                        \left(
                        d\mu_j(y)
                        -
                        d\mu(y)
                        \right)
                    \right|
                >
                    10\omega(2\delta)
                \right\}
                \cap
                \left\{
                \forall_{i\in\{1,...,L_\delta\}}
                \left|
                    \sum_{l = 1}^j \frac{\overline{Z}_{l, 2\delta}(x_i)}{j}
                \right|
            \leq
        \delta_\epsilon
        \right\}
            \right)\\
        \leq&
        \lim_n
            \sum_{j \geq n}
            \mathbb{P}
            \left(
                \left\{
            \max_{i \in \{1,...,L_\delta\}}
                \left|
                    \int_\mathcal{X}
                    k_\delta(x_i;y)
                        \left(
                            d\mu_j(y)
                        -
                            d\mu(y)
                        \right)
                    \right|
            >
                4\omega(2\delta)
        \right\}
            \right)\\
    \leq&
        \lim_n
            \sum_{j \geq n}
            \sum_{i=1}^{L_\delta}
            \mathbb{P}
            \left(
                \left\{
                \left|
                    \frac{1}{k}
                    \sum_{l=1}^j
                    \overline{Z}_{l,\delta}(x_i)
                    \right|
            >
                4\omega(2\delta)
        \right\}
            \right)\\
    \leq&\lim_n
            \sum_{j \geq n}
            \sum_{i=1}^{L_\delta}
        2\exp
            \left\{
                \frac{
                -j(4 \omega(2\delta))^2
                }
                {32M^2 + \frac{4}{3}M(4\omega(2\delta))}
            \right\}
    =
        0.
    \end{align}
    This finishes the proof.
\end{proof}

\section{Convergence Of Operators}

This section is dedicated to proving the conditions of Theorem \ref{thm:funcAnalAbstractSpectralConvergence} and \ref{thm:quantifiedSpectralConvergence}, for the operators $\hat{T}_{n, \mu}$ defined by \eqref{eq:THatDef}, so we can obtain the almost sure convergence of the eigenvectors of the operators $U_{n, \mu}$ and $U_{n,\mu}'$. For this we start by proving that almost surely the operator $\hat{T}_{n,\mu}$ converges to $\hat{T}_\mu$ pointwise in Proposition \ref{prop:pointWiseConvergence}. We then proceed to prove the compact collective convergence of $\hat{T}_{n,\mu}$ to $\hat{T}_{\mu}$ happens almost surely. With this we will be able to conclude the compact convergence of both the operators $U_{n, \mu}$ and $U_{n,\mu}'$ to $U_\mu$ and $U_{\mu}'$ respectively. We finish the section by proving Theorems \ref{thm:llneigenfunctions1} and \ref{thm:llneigenfunctions2}.

As in the previous section we fix $M, a > 0$ and $\omega(\cdot) : [0,+\infty[ \rightarrow \mathbb{R}$ a non decreasing funcition such that $\lim_{\delta \rightarrow 0} \omega(\delta) = 0$. We assume that the kernel $k: \mathcal{X} \times \mathcal{X} \rightarrow \mathbb{R}$ satisfies the bound \eqref{eq:UpBoundKernel} and fix a measure $\mu \in \mathcal{U}_{a,\omega(\cdot), k}$ given by \eqref{eq:classOfMeasures}.

\begin{prop}
\label{prop:pointWiseConvergence}
    We have $\hat{T}_{n, \mu} \stackunder{$\longrightarrow$}{$\scriptsize{\text{p}}$} T_\mu$ almost surely.
\end{prop}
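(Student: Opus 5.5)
We must show that for every fixed $f \in B(\mathcal{X})$, almost surely $\|\hat{T}_{n,\mu}f - T_\mu f\|_{B(\mathcal{X})} \to 0$. The plan is to split
\begin{equation}
    \hat{T}_{n,\mu}f - T_\mu f
=
    \left(\hat{T}_{n,\mu}f - T_{n,\mu}f\right)
+
    \left(T_{n,\mu}f - T_\mu f\right).
\end{equation}

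The second term is handled by Remark \ref{rem:averageKernelRemark}: by Lemma \ref{lem:estimatesForH}, $h_\mu$ satisfies the same hypotheses as $k$. Proposition \ref{prop:LawOfLargeNumber}, applied with kernel $h_\mu$ and $g=f$, then gives $\sup_x |T_{n,\mu}f(x) - T_\mu f(x)| \to 0$ almost surely.

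For the first term we compare the kernels $h_{n,\mu}$ and $h_\mu$ directly. Writing
\begin{equation}
    h_{n,\mu}(x,y) - h_\mu(x,y)
=
    \frac{k(x,y)}{2}\left(\frac{d_\mu(x)-d_{n,\mu}(x)}{d_{n,\mu}(x)d_\mu(x)} + \frac{d_\mu(y)-d_{n,\mu}(y)}{d_{n,\mu}(y)d_\mu(y)}\right),
\end{equation}
we see that everything is controlled by $\Delta_n := \sup_x |d_{n,\mu}(x) - d_\mu(x)|$. Since $d_{n,\mu} = P_{n,\mu}1$ and $d_\mu = P_\mu 1$, Proposition \ref{prop:LawOfLargeNumber} with $g\equiv 1$ gives $\Delta_n \to 0$ almost surely.

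On this full-measure event there is a (random) $N$ such that $\Delta_n < a/2$ for all $n\ge N$. By \eqref{eq:UpAndLowBound} this forces $\inf_x d_{n,\mu}(x) \geq a/2$, so $\hat{T}_{n,\mu}$ is well defined for $n \geq N$. Using \eqref{eq:UpBoundKernel}, we then get the pointwise kernel bound
\begin{equation}
    |h_{n,\mu}(x,y) - h_\mu(x,y)| \le \frac{2M}{a^2}\Delta_n .
\end{equation}
Because $\mu_n$ is a probability measure, this yields
\begin{equation}
    \|\hat{T}_{n,\mu}f - T_{n,\mu}f\|_{B(\mathcal{X})} \le \frac{2M}{a^2}\Delta_n \|f\|_{B(\mathcal{X})} \to 0 .
\end{equation}
This bound is in fact uniform over $\|f\|\le 1$, which will also be useful later.

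The only delicate point is the bookkeeping of null sets. The event on which $\Delta_n\to0$ does not depend on $f$, while the event from Remark \ref{rem:averageKernelRemark} does depend on $f$. Since pointwise convergence is stated "for all $x$" (here, for all $f$), we interpret "almost surely" as: for each $f$, almost surely. This is the reading Theorem \ref{thm:funcAnalAbstractSpectralConvergence} needs in combination with compact convergence, which is proved later.

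If a single null set valid for all $f$ is required, the plan is to replace the second term's argument with a uniform one. We would bound $\sup_{\|f\|\le1}$ via a collective-compactness argument rather than attempting a Glivenko–Cantelli statement over all $f$, which fails in $B(\mathcal{X})$. For the present proposition, though, the per-$f$ statement suffices, and intersecting two probability-one events finishes the proof.
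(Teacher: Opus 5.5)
Your proof is correct and is essentially the paper's: the same split through $T_{n,\mu}f$, Remark~\ref{rem:averageKernelRemark} for $T_{n,\mu}f-T_\mu f$, and Proposition~\ref{prop:LawOfLargeNumber} with $g\equiv 1$ to control $h_{n,\mu}-h_\mu$ by $\sup_x|d_{n,\mu}(x)-d_\mu(x)|$. Like the paper's proof, this gives the per-$f$ almost sure statement, and your $\Delta_n<a/2$ step and integration against $\mu_n$ are if anything cleaner than the paper's display.

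One caution about your last paragraph: the single-null-set version is not merely harder but false in general. For each sample path, take the Borel function $f_\omega=\chi_{\{X_j(\omega):\, j\in\mathbb{N}\}}$; then $\hat{T}_{n,\mu}f_\omega=m_{n,\mu}\geq\tfrac12$, while $T_\mu f_\omega=0$ whenever $\mu$ has no atoms. So no collective-compactness argument can supply it, and the per-$f$ statement cannot be upgraded to a pathwise one when it is later fed into the deterministic Theorem~\ref{thm:funcAnalAbstractSpectralConvergence}.
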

\begin{proof}
    Given $f \in B(\mathcal{X})$ then
    \begin{equation}
        \|
            \hat{T}_{n, \mu}f
        -
            T_\mu f
        \|_{B(\mathcal{X})}
    \leq
        \|
            \hat{T}_{n, \mu}f
        -
            T_{n, \mu}f
        \|_{B(\mathcal{X})}
    +
        \|
            T_{n, \mu}f
        -
            T_\mu f
        \|_{B(\mathcal{X})}.
    \end{equation}
    The second term on the right hand side of the equation can be written as
    \begin{align}
        \|
            T_{n, \mu}f
        -
            T_\mu f
        \|_{B(\mathcal{X})}
    &=
        \sup_{x \in \mathcal{X}}
        \left|
            \int_{\mathcal{X}}
            h_{\mu}(x,y)
            f(y)
            \left(
                d\mu_n(y)
                -
                d\mu(y)
            \right)
        \right|
    \end{align}
    which by Remark \ref{rem:averageKernelRemark} we know converges almost surely to zero.

    Now we notice that since there exists $a > 0$ such that $0< a < \inf_{x \in \mathcal{X}}d_{\mu}(x)$, applying Proposition \ref{prop:LawOfLargeNumber} with $g = 1$, with probability $1$, there exists $N$ such that for all $n > N$ then $a < \inf_{x \in \mathcal{X}} d_{n,\mu}(x)$. Thus, for $n> N$ we have
    \begin{align}
        \|
            \hat{T}_{n, \mu}f
        -
            T_{n, \mu}f
        \|_{B(\mathcal{X})}
    &\leq
        \sup_{x\in \mathcal{X}}
        \left|
            \int_{\mathcal{X}}
            \frac{k(x,y)}{2}
            f(y)
            \left(
                \frac{1}{d_{\mu,n}(x)}
                +
                \frac{1}{d_{\mu,n}(y)}
                -
                \frac{1}{d_\mu(x)}
                -
                \frac{1}{d_\mu(y)}
            \right)
            d\mu(y)
        \right|\\
    &\leq
        \frac{M\|f\|_{B(\mathcal{X})}
       }{2}
        \sup_{x,y\in \mathcal{X}}
        \left|
            \frac{1}{d_{\mu,n}(x)}
            -
                \frac{1}{d_\mu(x)}
                +
                \frac{1}{d_{\mu,n}(y)}
                -
                \frac{1}{d_\mu(y)}
        \right|\\
    &\leq
        \frac{M\|f\|_{B(\mathcal{X})}
        }{2}
        \sup_{x,y\in \mathcal{X}}
        \left|
            \frac{d_\mu(x)-d_{\mu,n}(x)}
            {d_\mu(x)d_{\mu,n}(x)}
            +
            \frac{d_\mu(y)-d_{\mu,n}(y)}
            {d_\mu(y)d_{\mu,n}(y)}
        \right|\\
    &\leq
        \frac{M\|f\|_{B(\mathcal{X})}
        }
        {2a^2}
        \sup_{x,y\in \mathcal{X}}
        \left|
            d_{\mu}(x) - d_{n,\mu}(x)
        \right|
        +
        \left|
            d_{\mu}(y) - d_{n,\mu}(y)
        \right|.
    \end{align}
    Again applying Proposition \ref{prop:LawOfLargeNumber} we know that almost surely $\sup_{x\in \mathcal{X}}|d_{\mu}(x) - d_{n,\mu}(x)| \rightarrow 0$ and so the proof follows.
\end{proof}

Now we prove a "pseudo" equicontinuity of a sequence $\hat{T}_{n,\mu} f_n$ when $f_n$ is bounded in $B(\mathcal{X})$. This will be useful to us because to show the collective compact convergence of $\hat{T}_n$ we will resort also to a proof similar to that of the  Arzelà–Ascoli Theorem, even though the functions $\hat{T}_{n,\mu}f_n$ are not continuous. In some sense this result is related (but not completely), to the fact that the operators $\hat{T}_{n,\mu}$, (which may have discontinuous functions in its image) converges to the operator $T_\mu$ that by Proposition \ref{prop:imageInContinuousFunction} has its image in the continuous functions $C(\mathcal{X})$. In particular $T_\mu$ is a compact operator to the space of continuous functions, since the image of the unit ball of $B(\mathcal{X})$ to $C(\mathcal{X})$ is an equicontinuous set (see Proposition \ref{prop:imageInContinuousFunction}).

\begin{lemma}
\label{lem:equicontinuity}
    Given a sequence $f_n \in B(\mathcal{X})$ uniformly bounded, almost surely, for all $\epsilon > 0$, there exist $\delta > 0$ and $N \in \mathbb{N}$ such that if $n > N$ and $d(x,x') < \delta$ then
    \begin{equation}
        |\hat{T}_{n,\mu}f_n(x) - \hat{T}_{n,\mu} f_n(x')| < \epsilon,
    \end{equation}
    or equivalently
    \begin{equation}
        \mathbb{P}
        \left(
            \left\{
                \lim_{\delta \rightarrow 0, n \rightarrow \infty}
                \sup_{x\in \mathcal{X}}
                \sup_{x' \in B_\delta(x)}
                \left|
                    \hat{T}_{n,\mu}f_n(x)
                -
                    \hat{T}_{n,\mu}f_n(x')
                \right|
                =
                0
            \right\}
        \right)
        =
        1
    \end{equation}
\end{lemma}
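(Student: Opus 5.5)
We bound the oscillation of $\hat{T}_{n,\mu}f_n$ pointwise by quantities that do not depend on $f_n$. Then we control those quantities uniformly in $x$ using Lemma \ref{lem:differenceOfKernelEstimate} and Proposition \ref{prop:LawOfLargeNumber}. Let $C:=\sup_n\|f_n\|_{B(\mathcal{X})}$.

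\textbf{Step 1: the good event.} By Proposition \ref{prop:LawOfLargeNumber} with $g=1$, almost surely $\sup_x|d_{n,\mu}(x)-d_\mu(x)|\to 0$. Hence, as in the proof of Proposition \ref{prop:pointWiseConvergence}, for $n$ large we have $\inf_x d_{n,\mu}(x)>a/2$. We also apply Lemma \ref{lem:differenceOfKernelEstimate} for every $\delta=1/m$, $m\in\mathbb{N}$, and intersect the countably many full-probability events. On the resulting event $\Omega_0$ of probability one, for each $m$ there is $N_m$ such that for $n>N_m$
\begin{equation}
\sup_{x\in\mathcal{X}}\int_{\mathcal{X}}k_{1/m}(x;y)\,d\mu_n(y)\leq \omega(1/m)+11\omega(2/m).
\end{equation}
This uses $\int k_{1/m}(x;\cdot)\,d\mu\leq\omega(1/m)$ from \eqref{eq:maxDifferenceInBall}.

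\textbf{Step 2: deterministic estimate on $\Omega_0$.} Write $\hat{T}_{n,\mu}f_n(x)=\tfrac12\int k(x,y)f_n(y)\big(d_{n,\mu}(x)^{-1}+d_{n,\mu}(y)^{-1}\big)d\mu_n(y)$. For $x'\in B_\delta(x)$ we split the difference as in the proof of Lemma \ref{lem:estimatesForH}.

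The terms with $|k(x,y)-k(x',y)|$ are bounded by $\frac{2C}{a}\int k_\delta(x;y)\,d\mu_n(y)$, using $d_{n,\mu}\geq a/2$.

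The term $\frac{C}{2}\,\big|d_{n,\mu}(x)^{-1}-d_{n,\mu}(x')^{-1}\big|\,\int k(x',y)\,d\mu_n(y)$ is bounded by $\frac{2CM}{a^2}\,|d_{n,\mu}(x)-d_{n,\mu}(x')|$. In turn, $|d_{n,\mu}(x)-d_{n,\mu}(x')|\leq\int k_\delta(x;y)\,d\mu_n(y)$.

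So everything reduces to bounding $\sup_x\int k_\delta(x;y)\,d\mu_n(y)$, which Step 1 makes smaller than $\omega(\delta)+11\omega(2\delta)$ for $n>N_m$ and $\delta=1/m$. Given $\epsilon$, we pick $m$ with $C(2/a+2M/a^2)(\omega(1/m)+11\omega(2/m))<\epsilon$, and take $\delta=1/m$ and $N=N_m$.

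\textbf{Main obstacle.} The main point is that the bound must be uniform in $x$ and must not depend on the arbitrary sequence $f_n$. This works because $f_n$ enters only through $\|f_n\|_{B(\mathcal{X})}$. The uniformity in $x$ is precisely what Lemma \ref{lem:differenceOfKernelEstimate} supplies; its proof needed a covering argument precisely because $k_\delta(x;\cdot)$ depends on $x$ non-continuously.

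A second subtlety is that $\omega$ is only assumed continuous, not that $\omega(2\delta)\to0$ comes for free. It does, however, since $\lim_{\delta\to0}\omega(\delta)=0$.

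Finally, we must restrict to countably many $\delta$ so that the almost-sure statements can be intersected. Since the event $\Omega_0$ is independent of the sequence $(f_n)$, the conclusion even holds simultaneously for all bounded sequences.
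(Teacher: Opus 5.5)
Your proof is correct and follows essentially the paper's argument. You use the same probability-one event (Lemma \ref{lem:differenceOfKernelEstimate} along a countable sequence of radii, intersected with uniform convergence of $d_{n,\mu}$) and the same three-term splitting of $\hat{T}_{n,\mu}f_n(x)-\hat{T}_{n,\mu}f_n(x')$. The one variation is that you bound $|d_{n,\mu}(x)^{-1}-d_{n,\mu}(x')^{-1}|$ directly by $\frac{4}{a^2}\int k_\delta(x;y)\,d\mu_n(y)$, whereas the paper goes through the uniform continuity of $1/d_\mu$. Your route is slightly more economical; just take $N$ also large enough that $\inf_x d_{n,\mu}(x)>a/2$.
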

\begin{proof}
    Let $f_n \in B(\mathcal{X})$ be a sequence such that \begin{equation}
    \label{eq:boundOnSequenceLInfty}
        \sup_n \|f_n\|_{B(\mathcal{X})} \leq C < \infty.
    \end{equation}
    Fix a sequence $\delta_n \rightarrow 0$. By Lemma \ref{lem:differenceOfKernelEstimate}, we know
    \begin{equation}
            \mathbb{P}
            \left(
                \bigcap_{i}
                \left\{
                    \limsup_{n\rightarrow \infty}
                    \sup_{x \in \mathcal{X}}
                    \left|
                    \int_\mathcal{X}
                    k_{\delta_i}(x;y)
                    \left(
                        d\mu_n(y)
                    -
                        d\mu(y)
                    \right)
                    \right|
                \leq
                    10\omega(2\delta_i)
                \right\}
            \right)
            =
            1.
        \end{equation}
        Similarly by Proposition \ref{prop:LawOfLargeNumber} we have
        \begin{equation}
            \mathbb{P}\left(
                \left\{
                    \lim_n
                    \|
                        d_{n,\mu} - d_{\mu}
                    \|_{B(\mathcal{X})}
                    =
                    0
                \right\}
            \right) = 1
        \end{equation}
        Thus for the rest of the proof we fix an event with probability $1$ in
        \begin{equation}
        \label{eq:prob1SetForEquicontinuity}
            \bigcap_{i}
                \left\{
                    \limsup_{n\rightarrow \infty}
                    \sup_{x \in \mathcal{X}}
                    \left|
                    \int_\mathcal{X}
                    k_{\delta_i}(x;y)
                    \left(
                        d\mu_n(y)
                    -
                        d\mu(y)
                    \right)
                    \right|
                \leq
                    10\omega(2\delta_i)
                \right\}
            \cap
            \left\{
                    \lim_n
                    \|
                        d_{n,\mu} - d_{\mu}
                    \|_{B(\mathcal{X})}
                    =
                    0
                \right\}.
        \end{equation}

        Under this assumptions we show that given any $\epsilon >0$ we can find $\delta > 0$ and $N \in \mathbb{N}$ such that for $n \geq N$
        \begin{equation}
            \left|
                    \hat{T}_{n,\mu}f_n(x)
                -
                    \hat{T}_{n,\mu}f_n(x')
                \right|
                <
                \epsilon.
        \end{equation}
        Let $\epsilon > 0$, and fix the constant $\alpha = \min\{\frac{a}{66C}, \frac{1}{9MC}\}> 0$.
        By equation \eqref{eq:UpAndLowBound} there exists $a>0$ such that $0 < a < d_{\mu}(x) = \int_\mathcal{X} k(x,y)d\mu(y)$. This fact and Proposition \ref{prop:imageInContinuousFunction} imply that $\frac{1}{d_{\mu}(x)} = \frac{1}{[P_\mu1](x)}$ is continuous, and since $\mathcal{X}$ is compact it is uniformly continuous. Thus there exists $\tilde{\delta} > 0$ such that if $d(x,x') < \tilde{\delta}$ we have
        \begin{equation}
        \label{eq:differenceOfAverages}
            \left|
                \frac{1}{d_{\mu}(x)}
            -
                \frac{1}{d_{\mu}(x')}
            \right|
        <
            \alpha
            \epsilon.
        \end{equation}
        Consider $l \in \mathbb{N}$ such that by equation \eqref{eq:maxDifferenceInBall} we can assume $\omega(\delta_l) \leq \omega(2\delta_l) < \alpha \epsilon$ and $\delta_l < \tilde{\delta}$. We take $\delta := \delta_l$. Furthermore since conditions \eqref{eq:prob1SetForEquicontinuity} are satisfied, there exists $N \in \mathbb{N}$ such that for all $n \geq N$ we have
        \begin{align}
        \label{eq:approximateDiffferenceKernelForEmpirical}
        &\left|
            \int_\mathcal{X}
            k_{\delta_l}(x;y)
            \left(
                d\mu_n(y)
            -
                d\mu(y)
            \right)
        \right|
        \leq
        10\omega(2\delta_l)
        <
            10\alpha\epsilon,\\
        &0<a<\inf_{x \in \mathcal{X}} d_{n,\mu}(x),
        \quad \quad
        \omega(\delta_l) < \alpha\epsilon,\\
        &\sup_{x \in \mathcal{X}}
        \left|
            \frac{1}{d_{\mu}(x)}
        -
            \frac{1}{d_{n,\mu}(x)}
        \right|
        <
            \min\{\alpha\epsilon, \frac{1}{2a}\}.
        \label{eq:averageConvergence}
        \end{align}
        Thus given $x, x' \in \mathcal{X}$ such that $d(x,x') < \delta = \delta_l < \tilde{\delta}$, by equation \eqref{eq:boundOnSequenceLInfty}
        \begin{align}
            &\left|
                \hat{T}_{n, \mu} f_n(x)
            -
                \hat{T}_{n, \mu} f_n(x')
            \right|\\
        \leq&
            \frac{1}{2}\int_{\mathcal{X}}
            \left|
            k(x,y)
            \left(
                \frac{1}
                {d_{n,\mu}(x)}
            +
                \frac{1}{d_{n,\mu}(y)}
            \right)f_n(y)
            -
                k(x',y)
                \left(
                    \frac{1}
                    {d_{n,\mu}(x')}
                +
                    \frac{1}
                    {d_{n,\mu}(y)}
                \right)f_n(y)
                \right|
                d\mu_n(y)\\
        \leq&\frac{C}{2}
            \int_\mathcal{X}
                \left|
                k(x,y)
                \left(
                    \frac{1}{d_{n,\mu}(x)}
                -
                    \frac{1}{d_{n,\mu}(x')}
                \right)
                \right|
                d\mu_n(y)
        +\frac{C}{2}
            \int_\mathcal{X}
                \left|
                    \left(
                        k(x,y)
                    -
                        k(x',y)
                    \right)
                    \frac{1}{d_{n,\mu}(x')}
                \right|
                d\mu_n(y)\\
        &\quad\quad+
            \frac{C}{2}\int_\mathcal{X}
                \left|
                    \left(
                        k(x,y)
                    -
                        k(x',y)
                    \right)
                    \frac{1}{d_{n,\mu}(y)}
                \right|
                d\mu_n(y).
            \label{eq:3PartTriangleIneq}
        \end{align}
        We bound each of these terms separately. Using the fact that $x' \in B_{\delta_l}(x)$, $\delta_l < \tilde{\delta}$ for $n \geq N$ and equations \eqref{eq:UpBoundKernel}, \eqref{eq:differenceOfAverages}, \eqref{eq:averageConvergence} we have
        \begin{align}
            &C
            \int_\mathcal{X}
                \left|
                k(x,y)
                \left(
                    \frac{1}{d_{n,\mu}(x)}
                -
                    \frac{1}{d_{n,\mu}(x')}
                \right)
                \right|
                d\mu_n(y)\\
            \leq&
                CM
                \left|
                    \frac{1}{d_{n,\mu}(x)}
                -
                    \frac{1}{d_{\mu}(x)}
                \right|+
                \left|
                    \frac{1}{d_{\mu}(x)}
                -
                    \frac{1}{d_{\mu}(x')}
                \right|+
                \left|
                    \frac{1}{d_{\mu}(x')}
                -
                    \frac{1}{d_{n,\mu}(x')}
                \right|
            \leq
                MC
                \left(
                    3\alpha\epsilon
                \right).
            \label{eq:Triang1}
        \end{align}
        On the other hand using equations \eqref{eq:UpAndLowBound}, \eqref{eq:maxDifferenceInBall}, \eqref{eq:approximateDiffferenceKernelForEmpirical} and \eqref{eq:averageConvergence} since $x' \in B_{\delta_l}(x)$ we have for $n \geq N$
        \begin{align}
            C
            \int_\mathcal{X}
                \left|
                    \left(
                        k(x,y)
                    -
                        k(x',y)
                    \right)
                    \frac{1}{d_{n,\mu}(y)}
                \right|
                d\mu_n(y)
        &\leq
            \frac{2C}{a}
            \int_\mathcal{X}
                \left|
                    k(x,y)
                -
                    k(x',y)
                \right|
                d\mu_n(y)\\
        &\leq
            \frac{2C}{a}
            \int_\mathcal{X}
                k_{\delta_l}(x;y)
                d\mu_n(y)\\
        &\leq
            \frac{2C}{a}
            \int_\mathcal{X}
            k_{\delta_l}(x;y)
            d\mu(y)
        +
            \frac{2C}{a}
            \int_\mathcal{X}
            k_{\delta_l}(x;y)
            \left(
                d\mu_n(y)
            -
                d\mu(y)
            \right)\\
        &\leq
            \frac{2C}{a}
            \left(
                10\omega(2\delta_l)
            +
                \alpha\epsilon
            \right)
        \leq
            \frac{22C}{a}\alpha\epsilon.
            \label{eq:triang2}
        \end{align}
        Similarly we have
        \begin{align}
            C
            \int_{\mathcal{X}}
                \left|
                    \left(
                        k(x,y)
                    -
                        k(x',y)
                    \right)
                    \frac{1}{d_{n,\mu}(x')}
                \right|
                d\mu_n(y)
        &\leq
            \frac{22C}{a}\alpha\epsilon.
            \label{eq:triang3}
        \end{align}
        Using the definition of $\alpha$ and the previous inequalities \eqref{eq:3PartTriangleIneq}, \eqref{eq:Triang1}, \eqref{eq:triang2}, \eqref{eq:triang3}, we conclude
        \begin{equation}
            \left|
                \hat{T}_{n, \mu} f_n(x)
            -
                \hat{T}_{n, \mu} f_n(x')
            \right|
        \leq
            \epsilon.
        \end{equation}
\end{proof}

\begin{lemma}
\label{lem:finiteOperatorIsCompact}
    With probability $1$ there exists $N \in \mathbb{N}$ such that for all $n \geq N$ the operator $\hat{T}_{n, \mu} : B(\mathcal{X}) \rightarrow B(\mathcal{X})$ is well defined, compact and satisfies
    \begin{equation}
        \|
            \hat{T}_{n, \mu}
        \|_{B(\mathcal{X})}
    \leq
    \frac{M}{a}.
    \end{equation}
\end{lemma}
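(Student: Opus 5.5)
The plan is to first fix a probability-one event on which the degree functions are uniformly bounded below, and then read off well-definedness, the norm bound and compactness from the finite-sum structure of $\hat{T}_{n,\mu}$.

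\textbf{The good event.} Applying Proposition \ref{prop:LawOfLargeNumber} with $g=1$ gives, almost surely, $\|d_{n,\mu}-d_\mu\|_{B(\mathcal{X})}\to 0$. By \eqref{eq:UpAndLowBound}, $\inf_x d_\mu(x)>a$ strictly. Moreover $d_\mu$ is continuous by Proposition \ref{prop:imageInContinuousFunction} and $\mathcal{X}$ is compact, so the infimum is some $a'>a$. Hence on this event there is $N$ with $\inf_x d_{n,\mu}(x)>a$ for all $n\ge N$. This is exactly the argument already used in Proposition \ref{prop:pointWiseConvergence}, and it is the only probabilistic input.

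\textbf{Well-definedness and the norm bound.} Fix $n\ge N$ on this event. Then $h_{n,\mu}(x,y)$ is finite, with $0\le h_{n,\mu}(x,y)\le \frac{1}{2}M\left(\frac1a+\frac1a\right)=M/a$ by \eqref{eq:UpBoundKernel}. It is also measurable in $x$, since $k(\cdot,X_j)$ and $d_{n,\mu}(\cdot)=\frac1n\sum_j k(\cdot,X_j)$ are measurable. Consequently $\hat{T}_{n,\mu}f(x)=\frac1n\sum_{j=1}^n h_{n,\mu}(x,X_j)f(X_j)$ is a finite sum of bounded measurable functions, so it lies in $B(\mathcal{X})$. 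The same sum gives
\[
|\hat{T}_{n,\mu}f(x)|\le \frac1n\sum_{j=1}^n \frac{M}{a}\|f\|_{B(\mathcal{X})},
\]
which yields $\|\hat{T}_{n,\mu}\|\le M/a$. Linearity is immediate.

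\textbf{Compactness.} The image of $\hat{T}_{n,\mu}$ is contained in the span of the $n$ functions $x\mapsto h_{n,\mu}(x,X_j)$, $j=1,\dots,n$, each of which lies in $B(\mathcal{X})$. Thus $\hat{T}_{n,\mu}$ is a bounded operator of finite rank, and therefore compact: the unit ball is mapped into a bounded subset of a finite-dimensional space.

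\textbf{Main subtlety.} There is no real obstacle here. The only care needed is in the first step: the event from Proposition \ref{prop:LawOfLargeNumber} must give a uniform lower bound on $d_{n,\mu}$, strictly at level $a$, for all $n$ beyond a random $N$. Obtaining this strict bound is precisely why we use the strict inequality in \eqref{eq:UpAndLowBound} together with the continuity of $d_\mu$ on the compact space $\mathcal{X}$.
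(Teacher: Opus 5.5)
Your proof is correct and follows essentially the same route as the paper: Proposition~\ref{prop:LawOfLargeNumber} with $g=1$ bounds $d_{n,\mu}$ from below for all large $n$, and then the finite-sum structure of $\hat{T}_{n,\mu}$ gives well-definedness, the norm bound and compactness (your finite-rank observation is the abstract form of the paper's explicit extraction of a subsequence with $f_{j_m}(X_i)\to\alpha_i$). One point in your favour: because you keep the strict threshold $\inf_x d_{n,\mu}(x) > a$ rather than the paper's $a/2$, you actually obtain the stated constant $M/a$, whereas the paper's own argument only yields $2M/a$ (the value it later uses); also, continuity of $d_\mu$ is not needed here, since \eqref{eq:UpAndLowBound} already asserts $\inf_x d_\mu(x) > a$.
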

\begin{proof}
    Start by using Proposition \ref{prop:LawOfLargeNumber} applied with $g = 1$, and equation \eqref{eq:UpAndLowBound} to conclude with probability $1$ there exists $N \in \mathbb{N}$ such that for all $n \geq N$ we have
    \begin{equation}
        \inf_{x \in X}
            d_{n,\mu}(x)
        \geq
            \frac{1}{2a}.
    \end{equation}
    This makes the operator defined by \eqref{eq:THatDef} well defined.

    Consider a sequence $f_j \in B(\mathcal{X})$ such that $\|f_j\|_{B(\mathcal{X})} \leq 1$ for all $j \geq N$. Given the random variables $X_1, ..., X_n$, for each sequence $\{f_j(X_i)\}_{j \geq N} \subset \mathbb{R}$, since it is bounded, we can find a subsequence $j_m$, such that for all $i \in \{1,...,n\}$ there exist $\alpha_i \in [-1,1]$ such that
    \begin{equation}
        \lim_m f_{j_m}(X_{i}) = \alpha_i.
    \end{equation}
    Consider the function given by
    \begin{equation}
        g(x)
    :=
        \frac{1}{n}
        \sum_{i=1}^n
        k(x,X_i)
        \left(
            \frac{1}{d_{n,\mu}(x)}
        +
            \frac{1}{d_{n,\mu}(X_i)}
        \right)
        \alpha_i
    \end{equation}
    Thus using the definition \eqref{eq:THatDef} we have
    \begin{align}
        \left|
            \hat{T}_{n, \mu} f_{j_m}(x)
        -
            g(x)
        \right|
    &\leq
        \left|
            \frac{1}{n}
            \sum_{i=1}^n
            \frac{1}{2}
            k(x,X_i)
        \left(
            \frac{1}{d_{n,\mu}(x)}
        +
            \frac{1}{d_{n,\mu}(X_i)}
        \right)
        \left(
            f_{j_m}(X_i)
        -
            \alpha_i
        \right)
        \right|\\ 
    &\leq
        \frac{1}{n}
        \sum_{i=1}^n
        \frac{2M}{a}
        \left|
            f_{j_m}(X_i)
        -
            \alpha_i
        \right|
        \rightarrow 0,
    \end{align}  
    where the right hand side converges uniformly to zero as $m \rightarrow \infty$, showing that $\|\hat{T}_{n, \mu}f_{j_m}-g\|_{B(\mathcal{X})}$ converges to zero, and so the operator $\hat{T}_{n, \mu}$ is compact. Finally we have for $n \geq N$ that given $f \in B(\mathcal{X})$ we have
    \begin{align}
        \|
            \hat{T}_{n, \mu} f
        \|_{B(\mathcal{X})}
    &\leq
        \sup_{x \in X}
        \left|
            \frac{1}{n}
            \sum_{i=1}^n
            \frac{1}{2}
            k(x,X_i)
        \left(
            \frac{1}{d_{n,\mu}(x)}
        +
            \frac{1}{d_{n,\mu}(X_i)}
        \right)
            f(X_i)
        \right|\\
    &\leq
        \sup_{x \in X}
        \frac{2M}
        {a}
        \|f\|_{B(\mathcal{X})}.
    \end{align}     
\end{proof}

\begin{prop}
\label{prop:collectiveCompactConvergence}
    Almost surely we have that $\hat{T}_{n,\mu}$ compactly converges to $T_\mu$, that is
    \begin{equation}
        \mathbb{P}\left(
            \left\{
                \lim_n
                \hat{T}_{n, \mu}
                \stackunder{$\longrightarrow$}{\scriptsize{$cc$}}
                T_{\mu}
            \right\}
        \right)
        =
        1.
    \end{equation}
\end{prop}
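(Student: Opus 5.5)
By Definition \ref{def:convergenceDefinitions}, collectively compact convergence has two parts. The first is pointwise convergence, which is Proposition \ref{prop:pointWiseConvergence}. The second is that the family $\{\hat{T}_{n,\mu}\}_{n\geq N}$ is collectively compact. So I would fix once and for all an event of probability one on which everything holds simultaneously. That event is the intersection of three events: the one from Proposition \ref{prop:pointWiseConvergence}; the one from Lemma \ref{lem:finiteOperatorIsCompact} (so $\hat{T}_{n,\mu}$ is well defined and compact, with $\|\hat{T}_{n,\mu}\|\leq C_0$ for $n\geq N$); and the one used in the proof of Lemma \ref{lem:equicontinuity}. For the last, note that the probability one set \eqref{eq:prob1SetForEquicontinuity} does not depend on the sequence $f_n$, only on the bound $C$. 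Its proof therefore yields a statement uniform over \emph{all} functions with $\|f\|_{B(\mathcal{X})}\leq 1$: for every $\epsilon>0$ there are $\delta>0$ and $N_\epsilon$ such that
\begin{equation}
\sup_{n\geq N_\epsilon}\ \sup_{\|f\|_{B(\mathcal{X})}\leq 1}\ \sup_{d(x,x')<\delta}\left|\hat{T}_{n,\mu}f(x)-\hat{T}_{n,\mu}f(x')\right|<\epsilon .
\end{equation}
I would make this uniformity explicit as the first step. It amounts to rereading the estimates \eqref{eq:Triang1}, \eqref{eq:triang2}, \eqref{eq:triang3}, none of which depend on $f_n$ beyond $C$.

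Next I would show that $S=\bigcup_{n\geq N}\hat{T}_{n,\mu}(\{\|f\|\leq 1\})$ is totally bounded in $B(\mathcal{X})$; since $B(\mathcal{X})$ is complete, this gives compact closure. The set is uniformly bounded by $C_0$. Fix $\epsilon>0$ and take $\delta,N_\epsilon$ as above. The finitely many operators $\hat{T}_{n,\mu}$ with $N\leq n<N_\epsilon$ are each compact by Lemma \ref{lem:finiteOperatorIsCompact}, so their images of the unit ball are totally bounded and admit a finite $\epsilon$-net. For $n\geq N_\epsilon$ I would mimic the Arzelà–Ascoli argument. Cover $\mathcal{X}$ by finitely many balls $B_\delta(x_1),\dots,B_\delta(x_L)$ and discretize $[-C_0,C_0]$ with mesh $\epsilon$. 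To each $g\in S$ with $n\geq N_\epsilon$, associate the vector of rounded values $(g(x_i))_{i\leq L}$. There are finitely many possible vectors. If two such $g,g'$ share the same vector, then for any $x\in B_\delta(x_i)$ we have $|g(x)-g'(x)|\leq |g(x)-g(x_i)|+|g(x_i)-g'(x_i)|+|g'(x_i)-g'(x)|\leq 3\epsilon$. Choosing one representative per vector gives a finite $3\epsilon$-net. Note that continuity of $g$ is never used, only the uniform oscillation bound, and this is exactly why the pseudo-equicontinuity of Lemma \ref{lem:equicontinuity} suffices even though the functions $\hat{T}_{n,\mu}f$ may be discontinuous.

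Combining the two nets gives total boundedness, hence collective compactness on the fixed full-measure event. Together with Proposition \ref{prop:pointWiseConvergence}, this gives $\hat{T}_{n,\mu}\stackunder{$\longrightarrow$}{\scriptsize{$cc$}}T_\mu$ almost surely. The main obstacle is the first step: making sure the oscillation bound of Lemma \ref{lem:equicontinuity} holds uniformly over the whole unit ball and all large $n$ on a single event, rather than for one sequence at a time. I would handle this by noting that the countable intersection over $\delta_i$ in \eqref{eq:prob1SetForEquicontinuity} is independent of $f$, so the choice of $\delta_l$ and $N$ in that proof depends only on $\epsilon$, $C=1$, $a$ and $M$.
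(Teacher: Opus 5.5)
Your proof is correct relative to the paper's earlier results and is essentially the paper's argument. Both combine pointwise convergence from Proposition \ref{prop:pointWiseConvergence} with an Arzel\`a--Ascoli argument built on the compactness of each $\hat{T}_{n,\mu}$ (Lemma \ref{lem:finiteOperatorIsCompact}) and the pseudo-equicontinuity of Lemma \ref{lem:equicontinuity}; you run it as a finite $3\epsilon$-net construction instead of the paper's diagonal extraction over the cover centres. Your observation that the event \eqref{eq:prob1SetForEquicontinuity} does not depend on $f$ makes explicit a uniformity the paper uses tacitly when it applies Lemma \ref{lem:equicontinuity} to a sequence $f_l$ chosen after the sample is drawn.

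The one caveat, inherited from the paper rather than introduced by you, is the pointwise part of your single event. The proof of Proposition \ref{prop:pointWiseConvergence} only gives a probability-one event for each fixed $f$. If $\mu$ has no atoms, this cannot be upgraded to all $f\in B(\mathcal{X})$ at once: for $f_\omega=\chi_{\{X_j(\omega):\,j\in\mathbb{N}\}}$ one has $T_\mu f_\omega=0$ but $\hat{T}_{n,\mu}f_\omega=m_{n,\mu}\geq\frac{1}{2}$ for every $n$. So your argument really delivers almost sure collective compactness together with, for each fixed $f$, almost sure convergence $\hat{T}_{n,\mu}f\to T_\mu f$.
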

\begin{proof}
    By Lemma \ref{lem:finiteOperatorIsCompact} with probability $1$, there exists $N$ such that for $n \geq N$ the operator $\hat{T}_{n, \mu}$ is compact and well defined.
    Since by Proposition \ref{prop:pointWiseConvergence} we have that $\hat{T}_{n, \mu}\stackunder{$\longrightarrow$}{$\scriptsize{\text{p}}$} T_{\mu}$, we only need to show that the set
    \begin{equation}
        \bigcup_{n \geq N}\hat{T}_{n, \mu}\left(\left\{f\in B(\mathcal{X}): \|f\|_{B(\mathcal{X})}\leq 1\right\}\right)
    \end{equation}
    is compact. For that consider $f_l \in B(\mathcal{X})$ a sequence such that $\|f_l\|_{B(\mathcal{X})} \leq 1$. Let $\hat{T}_{n_l, \mu}$ a sequence of operators with $n_l \geq N$. If $n_l$ is bounded, then there is a subsequence $l_j \subset \mathbb{N}$ such that $n_{l_j} = m \in \mathbb{N}$ is constant. By Lemma \ref{lem:finiteOperatorIsCompact}, $\hat{T}_m$ is compact, thus $\hat{T}_{n_{l_j}, \mu} f_{l_j}$ has a convergent subsequence. Thus we can assume without loss of generality that our sequence is of the form $\hat{T}_{l,\mu} f_l$.
    
    Consider a sequence $\epsilon_n \rightarrow 0$. Using Lemma \ref{lem:equicontinuity}, we know that with probability $1$, we can find $N_n \geq N$ and $\delta_n > 0 $ such that for $l \geq N_n$
    \begin{equation}
    \label{eq:diffInBallEquicont}
        \sup_{x\in \mathcal{X}}
        \sup_{x' \in B_{\delta_n}(x)}
        \left|
            \hat{T}_{l,\mu}f_l(x)
        -
            \hat{T}_{l,\mu}f_l(x')
        \right|
        <
        \epsilon_n.
    \end{equation}
    Since $\mathcal{X}$ is compact, for each $n \in \mathbb{N}$, we can take a finite cover of $\mathcal{X}$ by balls $\{B_{\delta_n}(x_i^n)\}_{i \in \{1,...,L_n\}}$. Let
    \begin{equation}
        Y = \bigcup_{n}\{x_i^n\}_{i\in\{1,...,L_n\}} \subset \mathcal{X}.
    \end{equation}
    Since $Y$ is countable and $\hat{T}_{l,\mu}f_l$ is uniformly bounded (using Lemma \ref{lem:finiteOperatorIsCompact} and $\|f\|_{B(\mathcal{X})} \leq 1$), we can take a subsequence labeled in $j$, $\hat{T}_{l_j,\mu}f_{l_j}$ using a diagonal argument, such that
    \begin{equation}
    \label{eq:convergenceOnDenseSet}
        \forall {y \in Y}
        \quad
        \hat{T}_{l_j,\mu}f_{l_j}(y)
        \text{ converges}.
    \end{equation} 
    Now we show that $\hat{T}_{l_j,\mu}f_{l_j}$ is a Cauchy sequence in $B(\mathcal{X})$, which is Banach space showing convergence, which concludes the proof. For simplicity in notation we will consider the subsequence $\hat{T}_{l_j,\mu}f_{l_j}$ simply as $\hat{T}_{l,\mu} f_l$.

    Given the previous sequence $\epsilon_n > 0$, use equation \eqref{eq:convergenceOnDenseSet} to find $\tilde{N}_n \geq N_n$ such that for $\sigma,j \geq \tilde{N}_n$ we have
    \begin{equation}
    \label{eq:convergenceInCountablePoints}
        \left|\hat{T}_{\sigma,\mu}f_\sigma(x_i^n) - \hat{T}_{j,\mu} f_j(x_i^n)\right|
    <
        \epsilon_n
    \end{equation}
    for all $i \in \{1,...,L_n\}$ where $\{x_i^n\}_{i=1,...,L_n}$ are the centers associated with the cover of balls with radius $\delta_n$. Thus given $z \in X$, we have that $z \in B_{\delta_n}(x_i^n)$ for some $i \in \{1,...,L_n\}$. Thus using equations \eqref{eq:diffInBallEquicont} and \eqref{eq:convergenceInCountablePoints}, for $\sigma,j \geq \tilde{N}_n$
    \begin{align}
        \left|
            \hat{T}_{j,\mu}f_j(z) - \hat{T}_{\sigma,\mu}f_\sigma(z)
        \right|
    &\leq
        \left|
            \hat{T}_{j,\mu}f_j(z) - \hat{T}_{j,\mu}f_j(x_i^n)
        \right|
    +
        \left|
            \hat{T}_{j,\mu}f_j(x_i^n) - \hat{T}_{\sigma,\mu}f_\sigma(x_i^n)
        \right|
    +
        \left|
            \hat{T}_{\sigma,\mu}f_\sigma(x_i^n) - \hat{T}_{\sigma,\mu}f_\sigma(z)
        \right|\\
    &\leq
        3\epsilon_n.
    \end{align}
    Since this is true for all $\epsilon_n$ and $\epsilon_n \rightarrow 0$, this proves that $\hat{T}_{j,\mu}f_j$ is a Cauchy sequence finishing the proof.
\end{proof}

\begin{proof}[Proof of Theorem \ref{thm:llneigenfunctions1}]
    By Proposition \eqref{prop:collectiveCompactConvergence}, we know that $\hat{T}_{n, \mu}$ converges collectively compactly almost surely. This implies that $U_{n, \mu}' = I - \hat{T}_{n, \mu}$ converges compactly to $U_{\mu}'$. Since $\lambda \neq 1$ we conclude that $\lambda$ is an isolated eigenvalue of $U_{\mu}'$, since $T_{\mu}$ is compact, and $\sigma_{\textup{ess}}(U_{\mu}') = \sigma_{\textup{ess}}(I - T_{\mu}) = \sigma_{\textup{ess}}(I) = \{1\}$. Thus we can use Theorem \ref{thm:funcAnalAbstractSpectralConvergence} to conclude the result.
\end{proof}

\begin{proof}[Proof of Theorem \ref{thm:llneigenfunctions2}]
    By Proposition \eqref{prop:collectiveCompactConvergence}, we know that $\hat{T}_{n, \mu}$ converges collectively compactly almost surely. Also by Lemma \ref{prop:pointWiseConvergence}, we can conclude that $m_{n,\mu} = T_{n, \mu}1$ converges uniformly to $m_\mu = T_{\mu}1$ almost surely. This implies that the multiplication operators $M_{m_n}$ converge in the operator topology almost surely to $M_{m_\mu}$. Both of these convergences imply compact convergence that is closed under addition, thus $U_{n, \mu} = M_{m_{n,\mu}} - \hat{T}_{n, \mu}$ converges compactly to $U$. Since $\lambda \notin rg(m_\mu)$ and $T_{\mu}$ is compact, we have that
    $$
        \sigma_{\textup{ess}}(U_{\mu}) = \sigma_{\textup{ess}}(M_{m_\mu} - T_{\mu}) = \sigma_{\textup{ess}}(M_{m_\mu}) \subset rg(m_\mu),
    $$ 
    and so we have that $\lambda$ is an isolated eigenvalue of $U$. Thus using Theorem \ref{thm:funcAnalAbstractSpectralConvergence} we can conclude the rest of the results.
\end{proof}

\section{Rates of convergence}

In this section we obtain rates of convergence for the eigenvectors. To do this the main functional analysis tool we will use is Theorem \ref{thm:quantifiedSpectralConvergence}. We notice that to apply this theorem we need compact collective convergence, and so this cannot be applied directly to $U_{n, \mu}$ or even $U_{n, \mu}'$. However, since $U'_{n,\mu} = I - \hat{T}_{n,\mu}$ and $U'_\mu = I - T_{\mu}$, we have that the eigenvectors of $U'_{n,\mu}$ and $U_\mu'$ are the same as those of $\hat{T}_{n,\mu}$ and $\hat{T}_\mu$ respectively. Thus if we show rates for the convergence of the eigenvectors of $\hat{T}_{n,\mu}$ to those of $T_{\mu}$, we will be showing rates for the convergence of the eigenvectors of $U_{n,\mu}'$ to those of $U_{\mu}'$. From Proposition \ref{prop:collectiveCompactConvergence} we know that $\hat{T}_{n,\mu}$ collectively compactly converges to $T_\mu$, thus we can apply Theorem \ref{thm:quantifiedSpectralConvergence} to obtain a bound on the convergence between the eigenvectors of $\hat{T}_{n,\mu}$ and $T_\mu$. This is illustrated in Theorem \ref{thm:rateOfConvergenceTheorem} which we prove at the end of this section.

We start by making an estimation on some quantities that will help us bound the terms found in inequality \eqref{eq:estimationOfEigenfunctions}. This first estimation follows closely the work of \cite{LuxburgSpectralConsistency}.

\begin{lemma}
\label{lem:boundsOnDifferencesOfFunctionals}
    Given $g \in B(\mathcal{X})$ let $\tilde{g}(y) = \frac{g(y)}{d_{\mu}(y)}$. If $\inf_{x \in \mathcal{X}} d_{n,\mu}(x) \geq \frac{a}{2}$ then
    \begin{align}
        \|
            (T_{n, \mu}
        -
            T_{\mu})g
        \|_{B(\mathcal{X})}
        &\leq
        \frac{1}{a}
        \sup_{x \in \mathcal{X}}
        \left|
            P_{n, \mu}g(x)
        -
            P_{\mu}g(x)
        \right|
        +
        \frac{1}{2}
        \sup_{x \in \mathcal{X}}
        \left|
            P_{n, \mu}\tilde{g}(x)
        -
            P_{\mu}\tilde{g}(x)
        \right|,
        \\
        \|
            (T_{\mu}-T_{n, \mu})T_{n, \mu}
        \|_{B(\mathcal{X})}
        &\leq
        \sup_{x,y \in \mathcal{X}}
        \left|
            [T_{n,\mu}h_\mu(y,\cdot)](x)
        -
            [T_\mu h_\mu(y,\cdot)](x)
        \right|
        ,\\
        \|\hat{T}_{n, \mu} - T_{n, \mu}\|_{B(\mathcal{X})}
    &\leq
        \frac{4M}{a^2}
        \sup_{x \in \mathcal{X}}
        \left|
            [P_{n, \mu} 1](x)
        -
            [P_{\mu} 1](x)
        \right|.
    \end{align}
\end{lemma}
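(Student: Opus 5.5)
The three bounds are all obtained by writing the relevant difference as an integral against $\mu_n-\mu$ and pulling out the kernels' sup bounds from \eqref{eq:UpBoundKernel} and \eqref{eq:UpAndLowBound}.

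\textbf{First inequality.} Expand $h_\mu(x,y)g(y)=\frac{1}{2}\frac{k(x,y)g(y)}{d_\mu(x)}+\frac{1}{2}k(x,y)\tilde g(y)$. Then
\begin{equation}
(T_{n,\mu}-T_\mu)g(x)=\frac{1}{2d_\mu(x)}\bigl(P_{n,\mu}g(x)-P_\mu g(x)\bigr)+\frac{1}{2}\bigl(P_{n,\mu}\tilde g(x)-P_\mu\tilde g(x)\bigr).
\end{equation}
Using $d_\mu\geq a$, the first coefficient is at most $\frac{1}{2a}\leq\frac1a$. Taking the supremum over $x$ gives the claim. Here $\tilde g\in B(\mathcal{X})$, again because $d_\mu\geq a$. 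This step does not use the hypothesis on $d_{n,\mu}$.

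\textbf{Second inequality.} For $\|f\|_{B(\mathcal{X})}\leq 1$, write $T_{n,\mu}f(y)=\frac1n\sum_j h_\mu(y,X_j)f(X_j)$. By Fubini (the $\mu_n$-integral is a finite sum) and the symmetry of $h_\mu$,
\begin{equation}
(T_\mu-T_{n,\mu})T_{n,\mu}f(x)=\frac1n\sum_j f(X_j)\Bigl([T_\mu h_\mu(X_j,\cdot)](x)-[T_{n,\mu}h_\mu(X_j,\cdot)](x)\Bigr).
\end{equation}
Each summand is bounded by $|f(X_j)|\sup_{x,y}|[T_{n,\mu}h_\mu(y,\cdot)](x)-[T_\mu h_\mu(y,\cdot)](x)|$, and averaging over $j$ gives the bound.

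\textbf{Third inequality.} Compute directly:
\begin{equation}
(\hat T_{n,\mu}-T_{n,\mu})f(x)=\frac1{2n}\sum_j k(x,X_j)f(X_j)\Bigl(\tfrac{1}{d_{n,\mu}(x)}-\tfrac{1}{d_\mu(x)}+\tfrac{1}{d_{n,\mu}(X_j)}-\tfrac{1}{d_\mu(X_j)}\Bigr).
\end{equation}
Each bracket term has the form $\frac{d_\mu-d_{n,\mu}}{d_\mu d_{n,\mu}}$. Using $d_\mu\geq a$ and $d_{n,\mu}\geq a/2$, each is at most $\frac{2}{a^2}\sup|P_{n,\mu}1-P_\mu1|$, since $d_{n,\mu}=P_{n,\mu}1$ and $d_\mu=P_\mu 1$. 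With $k\leq M$, $|f|\leq 1$ and the factor $\frac12$, the two terms together give $\frac{2M}{a^2}\sup|P_{n,\mu}1-P_\mu1|\leq\frac{4M}{a^2}\sup|P_{n,\mu}1-P_\mu1|$.

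The only delicate point is the interchange and symmetry step in the second bound. The remaining two bounds are routine; I expect no real obstacle beyond tracking constants.
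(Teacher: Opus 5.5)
Your proposal is correct and follows the paper's proof essentially step by step: the same splitting of $h_\mu$ into its $1/d_\mu(x)$ and $1/d_\mu(y)$ parts, the same interchange of the finite $\mu_n$-sum with the $(\mu-\mu_n)$-integral using the symmetry of $h_\mu$, and the same direct estimate of $\frac{1}{d_{n,\mu}}-\frac{1}{d_\mu}$ via $d_\mu\geq a$ and $d_{n,\mu}\geq a/2$. Your bookkeeping gives the sharper constants $\frac{1}{2a}$ and $\frac{2M}{a^2}$, and these imply the stated ones.
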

\begin{proof}
    The first inequality is a consequence of the definitions of $T_{n, \mu}$, $P_{n,\mu}$, $P_\mu$ and the bound \eqref{eq:UpAndLowBound}. That is given $g \in B(\mathcal{X})$ we have
    \begin{align}
        \|(T_{n,\mu}-T_\mu)g\|_{B(\mathcal{X})}
    &=
        \sup_{x \in \mathcal{X}}
        \left|
            \int_{\mathcal{X}}
                h_\mu(x,y)
                g(y)
                \left(
                    d\mu_n(y)
                -
                    d\mu(y)
                \right)
        \right|\\
    &=
        \frac{1}{2}
        \sup_{x \in \mathcal{X}}
        \left|
            \int_{\mathcal{X}}
                \left(
                    \frac{k(x,y)}{d_\mu(x)}
                +
                    \frac{k(x,y)}{d_\mu(y)}
                \right)
                g(y)
                \left(
                    d\mu_n(y)
                -
                    d\mu(y)
                \right)
        \right|\\
    &\leq
        \frac{1}{2}
        \sup_{x \in \mathcal{X}}
        \left|
            \frac{1}{d_\mu(x)}
            \int_{\mathcal{X}}
                    k(x,y)
                g(y)
                \left(
                    d\mu_n(y)
                -
                    d\mu(y)
                \right)
        \right|
        +
        \frac{1}{2}
        \sup_{x \in \mathcal{X}}
        \left|
            \int_{\mathcal{X}}
                    \frac{k(x,y)}{d_\mu(y)}
                g(y)
                \left(
                    d\mu_n(y)
                -
                    d\mu(y)
                \right)
        \right|\\
    &\leq
        \sup_{x \in \mathcal{X}}
        \left|
            \int_{\mathcal{X}}
                    k(x,y)
                \frac{g(y)}{a}
                \left(
                    d\mu_n(y)
                -
                    d\mu(y)
                \right)
        \right|
    +
        \frac{1}{2}
        \sup_{x \in \mathcal{X}}
        \left|
            \int_{\mathcal{X}}
                k(x,y)
                \frac{g(y)}{d_\mu(y)}
                \left(
                    d\mu_n(y)
                -
                    d\mu(y)
                \right)
        \right|
    \\
    &=
        \frac{1}{a}
        \sup_{x \in \mathcal{X}}
        \left|
            P_{n,\mu}g(x)
        -
            P_{\mu}g(x)
        \right|
        +
        \frac{1}{2}
        \sup_{x \in \mathcal{X}}
        \left|
            P_{n,\mu}\tilde{g}(x)
        -
            P_{\mu}\tilde{g}(x)
        \right|
        .
    \end{align}

    For the second inequality consider $g \in B(\mathcal{X})$ such that $\|g\|_{B(\mathcal{X})} \leq 1$, then
    \begin{align}
        \|
            (T_{\mu}-T_{n, \mu})T_{n, \mu}
            g
        \|_{B(\mathcal{X})}
    &\leq
        \sup_{x \in \mathcal{X}}
        \left|
            (T_{\mu}-T_{n, \mu})\left( \int_\mathcal{X} h_{\mu}(\cdot,y)g(y)d\mu_n(y) \right)(x)
        \right|\\
    &=
        \sup_{x \in \mathcal{X}}
        \left|
            \int_\mathcal{X}
            h_{\mu}(x,z)
            \left( \int_\mathcal{X} h_{\mu}(z,y)g(y)d\mu_n(y) \right)
            \left(
                d\mu(z)
            -
                d\mu_n(z)
            \right)
        \right|\\
    &=
        \sup_{x \in \mathcal{X}}
        \left|
            \int_\mathcal{X}
            g(y)
            \left( \int_\mathcal{X} h_{\mu}(z,y)
                h_{\mu}(x,z)
                \left(
                d\mu(z)
            -
                d\mu_n(z)
            \right)
            \right)
            d\mu_n(y)
        \right|\\
    &\leq
        \sup_{x \in \mathcal{X}}
        \|g\|_{B(\mathcal{X})}
        \sup_{y \in \mathcal{X}}
        \left|
            \int_\mathcal{X} 
                h_{\mu}(z,y)
                h_{\mu}(x,z)
                \left(
                d\mu(z)
            -
                d\mu_n(z)
            \right)
        \right|\\
    &
        \leq
        \sup_{x,y \in \mathcal{X}}
        \left|
            [T_{n,\mu}h_\mu(y,\cdot)](x)
        -
            [T_\mu h_\mu(y,\cdot)](x)
        \right|.
    \end{align}

    Finally given $g \in B(\mathcal{X})$ such that $\|g\|_{B(\mathcal{X})} \leq 1$, using equations \eqref{eq:UpBoundKernel}, \eqref{eq:UpAndLowBound}, we conclude
    \begin{align}
        \|
            (\hat{T}_{n, \mu} - T_{n, \mu})g
        \|_{B(\mathcal{X})}
    &=
        \frac{1}{2}
        \sup_{x \in \mathcal{X}}
        \left|
            \int_\mathcal{X}
                k(x,y)g(y)
                \left(
                    \frac{1}{d_{n,\mu}(x)}
                +
                    \frac{1}{d_{n,\mu}(y)}
                -
                    \frac{1}{d_{\mu}(x)}
                -
                    \frac{1}{d_{\mu}(y)}
                \right)
            d\mu_n(y)
        \right|\\
    &\leq
        \frac{1}{2}
        \sup_{x \in \mathcal{X}}
        \left|
            \int_\mathcal{X}
                k(x,y)g(y)
                \left(
                    \frac{d_{\mu}(y)-d_{n,\mu}(y)}{d_{n,\mu}(y)d_{\mu}(y)}
                +
                    \frac{d_{\mu}(x)-d_{n,\mu}(x)}{d_{n,\mu}(x)d_{\mu}(x)}
                \right)
            d\mu_n(y)
        \right|\\
    &\leq
        \frac{4M}{a^2}
        \sup_{x \in \mathcal{X}}
        \left|
            d_{\mu}(x)
            -
            d_{n,\mu}(x)
        \right|\\
    &=
        \frac{4M}{a^2}
        \sup_{x \in \mathcal{X}}
        \left|
            [P_{n, \mu} 1](x)
        -
            [P_{\mu} 1] (x)
        \right|.
    \end{align}
\end{proof}
\begin{remark}
The condition $\inf_{x \in \mathcal{X}} d_{n,\mu}(x) \geq \frac{a}{2}$ is necessary since we don't assume a bound from below on the kernel $k$, and only an integral bound. Thus to ensure that $d_{n,\mu}(x) \geq \frac{a}{2}$, we may need a large number of points if $k$ is zero on a large part of the set (for example $k(x,y) = \chi_{B_r(x)}(y)$ where $r$ is a small radius).
\end{remark}
\begin{prop}
\label{prop:estimateOnProjectionOfEigenvector}
    Suppose $\inf_{x \in \mathcal{X}} d_{n,\mu}(x) \geq \frac{a}{2}$. Let $u$ be an eigenvector of $T_\mu$ with eigenvalue $\lambda \neq 0$ of satisfying $\|u\|_{B(\mathcal{X})} = 1$ and $V \subset \mathbb{C}$ such that $V \cap \sigma(T_\mu) = \{\lambda\}$. Let $\text{Pr}_n$ be the spectral projection of $\sigma(\hat{T}_{n,\mu}) \cap V$. Then there exists a constant $\overline{C}_{\lambda, T_\mu}:=C_0\left(\frac{3}{2}+\frac{1}{a}+\frac{16M^2}{a^3}+\frac{4M}{a^2}\right)>0$ (depending on $\lambda$, $\sigma(T_\mu)$, $a$ and $M$) such that almost surely
    \begin{align}
    \label{eq:boundOnDifferenceToProjection}
        \|u - \text{Pr}_n u\|_{B(\mathcal{X})}
        &\leq
        \overline{C}_{\lambda, T_\mu}
        \Bigg(
        \sup_{x \in \mathcal{X}}
        \left|
            P_{n, \mu}\tilde{u}(x)
        -
            P_{\mu}\tilde{u}(x)
        \right|
        +
        \sup_{x,z \in \mathcal{X}}
        \left|
            [P_{n, \mu} h_{\mu}(z,\cdot)](x)
        -
            [P_{\mu} h_{\mu}(z, \cdot)](x)
        \right|\\
        &\quad\quad\quad
            +
        \sup_{x \in \mathcal{X}}
        \left|
            [P_{n, \mu} 1](x)
        -
            [P_{\mu} 1](x)
        \right|
        +
        \sup_{x \in \mathcal{X}}
        \left|
            P_{n, \mu}u(x)
        -
            P_{\mu}u(x)
        \right|
        \Bigg).
    \end{align}
\end{prop}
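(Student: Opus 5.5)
The plan is to apply Theorem \ref{thm:quantifiedSpectralConvergence} with $S = T_\mu$ and $S_n = \hat{T}_{n,\mu}$. Proposition \ref{prop:collectiveCompactConvergence} gives $\hat{T}_{n,\mu} \stackunder{$\longrightarrow$}{\scriptsize{$cc$}} T_\mu$ almost surely. Both operators are compact, by Proposition \ref{prop:imageInContinuousFunction} and Lemma \ref{lem:finiteOperatorIsCompact}. Since $\lambda \neq 0$ is an isolated eigenvalue of finite multiplicity of the compact operator $T_\mu$, for $\|u\|_{B(\mathcal{X})}=1$ we get
\begin{equation}
    \|u - \text{Pr}_n u\|_{B(\mathcal{X})}
\leq
    C_0\left(
        \|(\hat{T}_{n,\mu} - T_\mu)u\|_{B(\mathcal{X})}
    +
        \|(T_\mu - \hat{T}_{n,\mu})\hat{T}_{n,\mu}\|
    \right).
\end{equation}
The whole task is then to bound these two terms by the four suprema in \eqref{eq:boundOnDifferenceToProjection}. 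Throughout I would use the hypothesis $\inf_x d_{n,\mu}(x)\geq a/2$ to control $1/d_{n,\mu}$.

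\textbf{First term.} Split it as $\|(\hat{T}_{n,\mu}-T_{n,\mu})u\| + \|(T_{n,\mu}-T_\mu)u\|$ and apply Lemma \ref{lem:boundsOnDifferencesOfFunctionals}.
\begin{itemize}
\item The third inequality of that lemma bounds the first piece by $\frac{4M}{a^2}\sup_x|P_{n,\mu}1-P_\mu 1|$.
\item The first inequality bounds the second piece by $\frac1a\sup_x|P_{n,\mu}u-P_\mu u| + \frac12\sup_x|P_{n,\mu}\tilde u - P_\mu\tilde u|$.
\end{itemize}
Together these account for the constants $\frac{4M}{a^2}$, $\frac1a$ and $\frac12$.

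\textbf{Second term.} Write $\hat{T}_{n,\mu} = T_{n,\mu} + (\hat{T}_{n,\mu}-T_{n,\mu})$ and $T_\mu-\hat{T}_{n,\mu} = (T_\mu-T_{n,\mu}) - (\hat{T}_{n,\mu}-T_{n,\mu})$. Expanding the product gives
\begin{equation}
    (T_\mu-T_{n,\mu})T_{n,\mu} + (T_\mu - T_{n,\mu})(\hat{T}_{n,\mu}-T_{n,\mu}) - (\hat{T}_{n,\mu}-T_{n,\mu})\hat{T}_{n,\mu}.
\end{equation}
\begin{itemize}
\item The first summand is handled by the second inequality of Lemma \ref{lem:boundsOnDifferencesOfFunctionals}. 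This yields $\sup_{x,z}|[T_{n,\mu}h_\mu(z,\cdot)](x)-[T_\mu h_\mu(z,\cdot)](x)|$, which is the $h_\mu$-term of the statement: there $P$ acting on $h_\mu(z,\cdot)$ is read with kernel $h_\mu$, i.e.\ as $T$, via Lemma \ref{lem:estimatesForH}.
\item The other two summands are each a norm factor times $\|\hat{T}_{n,\mu}-T_{n,\mu}\|\leq \frac{4M}{a^2}\sup|P_{n,\mu}1-P_\mu1|$. The norm factors are $\|T_\mu\|,\|T_{n,\mu}\|\leq M/a$ from \eqref{eq:boundOnH2}, and $\|\hat{T}_{n,\mu}\|$, which is of order $M/a$ up to a factor $2$ from $d_{n,\mu}\geq a/2$.
\end{itemize}
Collecting the pieces gives a contribution of the form $\frac{cM^2}{a^3}\sup|P_{n,\mu}1-P_\mu1|$, matching the $\frac{16M^2}{a^3}$ term.

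\textbf{Assembly and main obstacle.} Summing everything and bounding each coefficient by $\frac32+\frac1a+\frac{16M^2}{a^3}+\frac{4M}{a^2}$ gives the constant $\overline{C}_{\lambda,T_\mu}$. The main obstacle is bookkeeping the constants so that the cross terms really sum to at most $16M^2/a^3$. This needs careful norm bounds on $\hat T_{n,\mu}$ under $d_{n,\mu}\ge a/2$: the kernel $h_{n,\mu}$ is bounded by $2M/a$, and an extra factor appears in the third inequality of Lemma \ref{lem:boundsOnDifferencesOfFunctionals}. If those bounds come out slightly larger, the natural fix is to enlarge the constant, since only its existence and dependence on $\lambda,\sigma(T_\mu),a,M$ matter.

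A secondary point is that Theorem \ref{thm:quantifiedSpectralConvergence} holds only for $n$ large, and its $C_0$ comes from the resolvent of $T_\mu$ on $\partial V$. I would note that $C_0$ depends only on $\lambda$ and $\sigma(T_\mu)$, and that the ``almost surely'' qualifier is inherited from Proposition \ref{prop:collectiveCompactConvergence}.
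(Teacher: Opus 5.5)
Your proposal is correct and follows the paper's proof: Theorem \ref{thm:quantifiedSpectralConvergence} combined with the three inequalities of Lemma \ref{lem:boundsOnDifferencesOfFunctionals}. Your three-term expansion of $(T_\mu-\hat{T}_{n,\mu})\hat{T}_{n,\mu}$ is only a regrouping of the paper's four-term telescoping, and the norm factors $\|T_\mu-T_{n,\mu}\|+\|\hat{T}_{n,\mu}\|\leq 2M/a+2M/a$ give exactly the $16M^2/a^3$ coefficient, so you do not need to enlarge the constant. Your reading of the $h_\mu$-term as the $T$-version is also what the paper's proof produces and what Theorem \ref{thm:rateOfConvergenceTheorem} later uses; the $P$ in the statement is a typo.
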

\begin{proof}
    Since $T_\mu$ is a compact operator and $\lambda \neq 0$ is an eigenvalue, it is of finite multiplicity. 
    Also by Proposition \ref{prop:collectiveCompactConvergence} we have compact collective convergence almost surely, thus we can apply Theorem \ref{thm:quantifiedSpectralConvergence} to conclude there exists $C_0>0$ such that
    \begin{equation}
    \label{eq:functionalAnalisysEq1}
        \|u - \text{Pr}_n u\|_{B(\mathcal{X})}
    \leq
        C_0
        \left(
            \|(\hat{T}_{n, \mu} - T_{\mu})u\|_{B(\mathcal{X})}
        +
            \|u\|_{B(\mathcal{X})}
            \|
                (T_{\mu} - \hat{T}_{n, \mu})\hat{T}_{n, \mu}
            \|_{B(\mathcal{X})}
        \right).
    \end{equation}
    We bound each of these terms individually. Notice that
    \begin{equation}
        \|T_{\mu}\|_{B(\mathcal{X})}\leq
        \frac{M}{a},
        \quad\quad
        \|T_{n,\mu}\|_{B(\mathcal{X})}\leq \frac{M}{a},
        \quad\quad
        \|\hat{T}_{n,\mu}\|_{B(\mathcal{X})} \leq \frac{2M}{a}.
    \end{equation}
    We can apply triangle inequality, the previous inequality and Lemmas \ref{lem:finiteOperatorIsCompact}, \ref{lem:boundsOnDifferencesOfFunctionals} to conclude
    \begin{align}
        \|
            (T_{\mu}
        -
            \hat{T}_{n, \mu}
            )
            \hat{T}_{n, \mu}
        \|_{B(\mathcal{X})}
    &\leq
           \|
            T_{\mu}
            T_{n, \mu}
        -
            T_{\mu}
            \hat{T}_{n, \mu}
        \|_{B(\mathcal{X})}
        +
        \|
            T_{\mu} T_{n, \mu}
        -
            T_{n, \mu} T_{n, \mu}
        \|_{B(\mathcal{X})}\\
    &\quad\quad
        +
        \|
            T_{n, \mu}T_{n, \mu}
        -
            T_{n, \mu}\hat{T}_{n, \mu}
        \|_{B(\mathcal{X})}
        +
        \|
            T_{n, \mu}\hat{T}_{n, \mu}
        -
            \hat{T}_{n, \mu}\hat{T}_{n, \mu}
        \|_{B(\mathcal{X})}\\
    &\leq
        \|T_{\mu}\|_{B(\mathcal{X})}
        \|T_{n, \mu} - \hat{T}_{n, \mu}\|_{B(\mathcal{X})}
    +
        \|(T_{\mu}-T_{n, \mu})T_{n, \mu}\|_{B(\mathcal{X})}
    \\
    &\quad\quad+
        \|T_{n, \mu}\|_{B(\mathcal{X})}
        \|T_{n, \mu} - \hat{T}_{n, \mu}\|_{B(\mathcal{X})}
    +
        \|\hat{T}_{n, \mu}\|_{B(\mathcal{X})}
        \|T_{n, \mu} - \hat{T}_{n, \mu}\|_{B(\mathcal{X})}\\
    &\leq
        \|(T_{\mu}-T_{n, \mu})T_{n, \mu}\|_{B(\mathcal{X})}
    +
        \frac{4M}{a}
        \|(T_{n,\mu}-\hat{T}_{n, \mu})\|_{B(\mathcal{X})}\\
    &\leq
        \sup_{x,y \in \mathcal{X}}
        \left|
            [T_{n,\mu}h_\mu(y,\cdot)](x)
        -
            [T_\mu h_\mu(y,\cdot)](x)
        \right|\\
    &\quad\quad
        +\frac{16M^2}{a^3}
        \sup_{x \in \mathcal{X}}
        \left|
            [P_{n, \mu} 1](x)
        -
            [P_{\mu} 1](x)
        \right|.
        \label{eq:bound1ForAF}
    \end{align}
    Also applying Lemma \ref{lem:boundsOnDifferencesOfFunctionals}
    
    \begin{align}
        \|(\hat{T}_{n, \mu} - T)u\|_{B(\mathcal{X})}
    &\leq
        \|u\|_{B(\mathcal{X})}
        \|
            \hat{T}_{n, \mu} - T_{n, \mu}
        \|_{B(\mathcal{X})}
    +
        \|
            (T_{n, \mu} - T_{\mu})u
        \|_{B(\mathcal{X})}\\
    &\leq
        \frac{4M}{a^2}
        \sup_{x \in \mathcal{X}}
        \left|
            [P_{n, \mu} 1](x)
        -
            [P_{\mu} 1](x)
        \right|
    +
        \frac{1}{2}
        \sup_{x \in \mathcal{X}}
        \left|
            P_{n, \mu}\tilde{u}(x)
        -
            P_{\mu}\tilde{u}(x)
        \right|
        \label{eq:bound2ForAF}\\
    &\quad\quad
        +
        \frac{1}{a}
        \sup_{x \in \mathcal{X}}
        \left|
            P_{n, \mu}u(x)
        -
            P_{\mu}u(x)
        \right|,
    \end{align}
    where $\tilde{u}(y) = \frac{1}{d_\mu(y)}u(y)$. Using equations \eqref{eq:functionalAnalisysEq1}, \eqref{eq:bound1ForAF}, \eqref{eq:bound2ForAF} and Lemma \eqref{lem:boundsOnDifferencesOfFunctionals} we conclude the proof.
\end{proof}

Now we are going to bound the terms of equation \eqref{eq:boundOnDifferenceToProjection}. In \cite{LuxburgSpectralConsistency} this was done using covering numbers (see definition \ref{def:coveringNumbers}). In particular there is \cite[Theorem 19]{LuxburgSpectralConsistency} (this Theorem can be obtained by using \cite[Theorem 2.34]{Mendelson} and \cite[Section 3.4]{Anthony}), which can bound the terms in equation \eqref{eq:boundOnDifferenceToProjection} using $N(\mathcal{F}, \epsilon, L^2(\mathcal{X},\mu_n))$ with high probability. However the authors in \cite{LuxburgSpectralConsistency} to bound the term $N(\mathcal{F}, \epsilon, L^2(\mathcal{X},\mu_n))$ use $N(\mathcal{F}, \epsilon, L^\infty(\mathcal{X}))$ for the family of functions $\mathcal{F} = \mathcal{K}, g\cdot \mathcal{H}, \mathcal{H} \cdot \mathcal{H}$. However in our situation, since we don't assume the kernel $k$ to be continuous, we might have $N(\mathcal{F}, \epsilon, L^\infty(\mathcal{X})) = \infty$ (consider for example $k(x,y) = \chi_{B_r(x)}(y)$ and $\epsilon < 1$). In fact condition \eqref{eq:maxDifferenceInBall} would give us finite $N(\mathcal{F}, \epsilon, L^1(\mathcal{X},\mu))$, which can not in general bound $N(\mathcal{F}, \epsilon, L^2(\mathcal{X},\mu_n))$ or even $N(\mathcal{F}, \epsilon, L^1(\mathcal{X},\mu_n))$. We decide to show a convergence rate directly using Lemmas \ref{lem:boundOnIndivProbTerm} and \ref{lem:probBoundOnKernelWithKernel}. This technique obtains rates of convergence of the order $O(\frac{\sqrt{\ln(n)}}{\sqrt{n}})$ which are slightly weaker than $O(\frac{1}{\sqrt{n}})$ which are the rates obtained in \cite{LuxburgSpectralConsistency}.

\begin{prop}
\label{prop:estimateOnProbabilityForRate}
Let $\gamma = 32M^2/a^2 + \frac{8}{3}M/a$. Consider $\alpha \geq 1$, a sequence $\delta_n > 0$ such that $\omega(\delta_n) \leq \frac{a}{8}\frac{\alpha}{\sqrt{n}}\sqrt{\ln( n)}, \tilde{\omega}(\delta_n) \leq \frac{a}{16M}\frac{\alpha}{\sqrt{n}}\sqrt{\ln( n)}$ and let $L_{\delta_n} = N(\mathcal{X},\delta_n,d)$ be the minimum number of balls of radius $\delta_n$ needed to cover $\mathcal{X}$. Given $u \in B(\mathcal{X})$ such that $\|u\|_{B(\mathcal{X})} = 1$, $N \in \mathbb{N}$, we have
    \begin{align}
    \label{eq:prob1}
        \mathbb{P}
        \left(
            \bigcap_{n \geq N}
            \left\{
                \sup_{x \in \mathcal{X}}
                \left|
                    P_{n, \mu} \tilde{u}(x)
                -
                    P_{\mu} \tilde{u}(x)
                \right|
        \leq
            \frac{\alpha}{\sqrt{n}}
            \sqrt{\ln(n)}
            \right\}
        \right)
    &\geq
        1
    -
         \sum_{n \geq N} 
            4L_{\delta_n}
            \frac{1}{n^{\frac{\alpha}{\gamma}}}\\
    \label{eq:probNew}
    \mathbb{P}
        \left(
            \bigcap_{n \geq N}
            \left\{
                \sup_{x \in \mathcal{X}}
                \left|
                    P_{n, \mu} u(x)
                -
                    P_{\mu} u(x)
                \right|
        \leq
            \frac{\alpha}{\sqrt{n}}
            \sqrt{\ln(n)}
            \right\}
        \right)
    &\geq
        1
    -
         \sum_{n \geq N} 
            4L_{\delta_n}
            \frac{1}{n^{\frac{\alpha}{\gamma}}}\\
    \label{eq:prob2}
    \mathbb{P}
        \left(
            \bigcap_{n \geq N}
            \left\{
                \sup_{x \in \mathcal{X}}
                \left|
                    [P_{n, \mu} 1](x)
                -
                    [P_{\mu} 1](x)
                \right|
        \leq
            \frac{\alpha}{\sqrt{n}}
            \sqrt{\ln(n)}
            \right\}
        \right)
    &\geq
            1
        -
            \sum_{n \geq N} 
            4L_{\delta_n}
            \frac{1}{n^{\frac{\alpha}{\gamma}}}\\
    \label{eq:prob3}
    \mathbb{P}
        \left(
            \bigcap_{n \geq N}
            \left\{
                \sup_{x, z \in \mathcal{X}}
                \left|
                    [T_{n, \mu} h_\mu(z,\cdot)](x)
                -
                    [T_{\mu} h_\mu(z,\cdot)](x)
                \right|
        \leq
            \frac{\alpha}{\sqrt{n}}
            \sqrt{\ln(n)}
            \right\}
        \right)
    &\geq
            1
        -
        \sum_{n \geq N} 
            4L_{\delta_n}^2\frac{1}{n^{\frac{\alpha a^2}{M^2\gamma}}}.
    \end{align}
    where $\tilde{u}(y) = \frac{1}{d_\mu(y)}u(y)$. If $\delta_a > 0$ is such that $w(\delta_a) \leq \frac{1}{16}a$, then
    \begin{equation}
    \label{eq:prob4}
        \mathbb{P}
        \left(
            \forall_{n \geq N}
            \left\{
                \sup_{x \in \mathcal{X}}
                \left|
                    P_{n, \mu} 1(x)
                -
                    P_{\mu} 1(x)
                \right|
        \leq
            \frac{a}{2}
            \right\}
        \right)
    \geq
        1
    -
        \sum_{n \geq N}
        4L_{\delta_a}
        \exp\left\{
            \frac{-n(a/2)^2}
            {32M^2 + \frac{8}{3}M(a/2)}
        \right\}
    \end{equation}
\end{prop}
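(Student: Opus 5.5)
Each of the five estimates follows the same pattern: a union bound over $n\geq N$, combined with the uniform deviation bounds of Lemma \ref{lem:boundOnIndivProbTerm} and Corollary \ref{cor:CrossedTermEstimate} evaluated at $\epsilon_n=\frac{\alpha}{\sqrt n}\sqrt{\ln n}$. Concretely, I will write
\begin{equation}
\mathbb{P}\Big(\bigcap_{n\geq N}A_n\Big)\geq 1-\sum_{n\geq N}\mathbb{P}(A_n^c),
\end{equation}
and then bound each $\mathbb{P}(A_n^c)$ separately.

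\textbf{Estimates \eqref{eq:prob1}, \eqref{eq:probNew}, \eqref{eq:prob2}.} For these I will apply Lemma \ref{lem:boundOnIndivProbTerm} with $g=u$, $g=1$, and $g=\tilde u$ respectively, taking $\epsilon=\epsilon_n$ and $\delta=\delta_n$. The hypothesis of that lemma is $\|g\|_{B(\mathcal{X})}\omega(\delta_n)\leq\epsilon_n/8$.
\begin{itemize}
\item For $u$ and $1$ the norm is $1$, so the hypothesis holds since $a\leq1$.
\item For $\tilde u$ we only have $\|\tilde u\|_{B(\mathcal{X})}\leq 1/a$, so we need $\omega(\delta_n)\leq a\epsilon_n/8$, which is exactly the assumed condition.
\end{itemize}
To get a clean bound, rather than the simplified form \eqref{eq:simplifiedBound} (which needs $\|g\|\leq 1$), I use the general two-term bound with $\|g\|\leq 1/a$ and $M\geq1$. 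Both exponents are then bounded below by $n\epsilon_n^2/(32M^2/a^2+\frac83 M\epsilon_n/a)$.

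Next I use $\epsilon_n\leq 1$; I need to check this holds (or absorb it) for the relevant $n$, since $\alpha\sqrt{\ln n/n}$ may exceed $1$ for small $n$. When $\epsilon_n\leq 1$, the denominator is at most $\gamma$, so
\begin{equation}
\exp(-n\epsilon_n^2/\gamma)=\exp(-\alpha^2\ln n/\gamma)\leq n^{-\alpha/\gamma},
\end{equation}
using $\alpha^2\geq\alpha$. When $\epsilon_n>1$ the stated bound should still follow after a short case check, since the right-hand side then tends to be trivial. Summing the two exponentials gives the factor $4L_{\delta_n}$.

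\textbf{Estimate \eqref{eq:prob3}.} Here I apply Corollary \ref{cor:CrossedTermEstimate} with $\epsilon=\epsilon_n$. Its hypothesis $\tilde\omega(\delta_n)\leq\frac{a}{16M}\epsilon_n$ is assumed. The exponent is
\begin{equation}
-\frac{na^2\epsilon_n^2}{32M^4/a^2+\frac83M^2\epsilon_n}\leq -\frac{a^2}{M^2}\cdot\frac{\alpha^2\ln n}{32M^2/a^2+\frac83},
\end{equation}
and the denominator on the right is at most $\gamma$ (using $\epsilon_n\leq1$ and $M/a\geq1$). This yields the term $4L_{\delta_n}^2 n^{-\alpha a^2/(M^2\gamma)}$.

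\textbf{Estimate \eqref{eq:prob4}.} I take the fixed $\epsilon=a/2$ and apply Lemma \ref{lem:boundOnIndivProbTerm} with $g=1$, in the simplified form \eqref{eq:simplifiedBound}. The hypothesis needed is $\omega(\delta_a)\leq a/16=\epsilon/8$, which is what is assumed. A union bound over $n\geq N$ then gives the claimed sum.

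\textbf{Main obstacle.} The only delicate part is the constant bookkeeping. First, I must verify that the $\tilde u$ case really yields exponent $\alpha/\gamma$ given $\|\tilde u\|\leq1/a$; this is where $\gamma$ carries the factor $1/a^2$. Second, I must handle the regime $\epsilon_n>1$ for small $n$. If that regime fails, I would restrict to $n$ with $\alpha\sqrt{\ln n/n}\leq1$, or note that for larger $\epsilon_n$ the denominator grows linearly while the numerator grows quadratically, so the exponential bound only improves.
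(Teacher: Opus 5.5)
Your route is the paper's: a union bound over $n\ge N$, Lemma \ref{lem:boundOnIndivProbTerm} with $g=\tilde u,u,1$ at $\epsilon_n=\frac{\alpha}{\sqrt n}\sqrt{\ln n}$, Corollary \ref{cor:CrossedTermEstimate} for the crossed term, and $\epsilon=a/2$ with \eqref{eq:simplifiedBound} for the last estimate. Your hypothesis checks are also right.

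The step that does not go through as written is how you control the size of $\epsilon_n$. You bound the Bernstein denominator by $\gamma$ using $\epsilon_n\le 1$, and then discard a factor of $\alpha$ via $\alpha^2\ge\alpha$. This fails whenever $\alpha\sqrt{\ln n/n}>1$. That regime is not marginal: Theorem \ref{thm:rateOfConvergenceTheorem} needs $\alpha>\tilde\gamma\ge 32$, so $\epsilon_n>1$ for all $n$ up to several thousand.

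Neither of your fallbacks closes the gap. Restricting to $n$ with $\epsilon_n\le 1$ proves a weaker statement, because the event is an intersection over all $n\ge N$. Your remark that the numerator grows quadratically while the denominator grows only linearly is the right intuition, but it is not yet an argument. Monotonicity of $\epsilon\mapsto n\epsilon^2/(A+B\epsilon)$ alone does not reach the target $n^{-\alpha/\gamma}$, since the target itself depends on $\alpha$.

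The missing quantitative input is $\epsilon_n\le\alpha$, which holds because $\ln n\le n$. The paper uses it and spends the spare factor of $\alpha$ on the denominator instead of throwing it away:
\[
\frac{n\epsilon_n^2}{32M^2/a^2+\frac{8}{3}M\epsilon_n/a}
\;\ge\;
\frac{\alpha^2\ln n}{32M^2/a^2+\frac{8}{3}M\alpha/a}
\;\ge\;
\frac{\alpha^2\ln n}{\alpha\gamma}
\;=\;\frac{\alpha}{\gamma}\ln n .
\]
The second inequality uses $\alpha\ge 1$. For \eqref{eq:prob3} the same device gives $32M^2/a^2+\frac{8}{3}\epsilon_n\le\alpha\bigl(32M^2/a^2+\frac{8}{3}\bigr)\le\alpha\gamma$ (using $M/a\ge1$), hence the exponent $\frac{\alpha a^2}{M^2\gamma}\ln n$. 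This holds for every $n$ with no case split. With this substitution your argument coincides with the paper's proof.
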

\begin{proof}
    In this proof we make use of the fact that $0<a \leq 1 \leq M$. Notice that $\|\tilde{u}\|_{B(\mathcal{X})} \leq \frac{1}{a}$. Let $\delta_n$ be such that $\omega(\delta_n) \leq \frac{a}{8}\frac{\alpha}{\sqrt{n}}\ln( n) \leq \frac{1}{8\|\tilde{u}\|_{B(\mathcal{X})}} \frac{\alpha}{\sqrt{n}}\ln( n)$. Notice that $\max_{n\geq 1} \frac{\sqrt{\ln(n)}}{\sqrt{n}} \leq 1$. Then using Lemma \ref{lem:boundOnIndivProbTerm}, for $n \geq 1$ we have
    \begin{align}
        &\mathbb{P}
        \left(
            \forall_{n \geq N}
            \left\{
                \sup_{x \in \mathcal{X}}
                \left|
                    P_{n, \mu} \tilde{u}(x)
                -
                    P_{\mu} \tilde{u}(x)
                \right|
        \leq
            \frac{\alpha}{\sqrt{n}}
            \sqrt{\ln( n)}
            \right\}
        \right)\\
    =&
        1
    -
        \mathbb{P}
        \left(
            \bigcup_{n \geq N}
            \left\{
                \sup_{x \in \mathcal{X}}
                \left|
                    P_{n, \mu} \tilde{u}(x)
                -
                    P_{\mu} \tilde{u}(x)
                \right|
        >
            \frac{\alpha}{\sqrt{n}}
            \sqrt{\ln( n)}
            \right\}
        \right)\\
    \geq&
        1
    -
        \sum_{n \geq N}
            2L_{\delta_n}
            \left(
                \exp\left\{
                    \frac
                    {-n
                    \left(
                        \frac{\alpha}{\sqrt{n}}
                        \sqrt{\ln( n)}
                    \right)^2
                    }
                    {32M^2 + \frac{8}{3}M\left(
                        \frac{\alpha}{\sqrt{n}}
                        \sqrt{\ln( n)}
                    \right)}
                \right\}
            +
                \exp\left\{
                    \frac{-n\left(
                        \frac{\alpha}{\sqrt{n}}
                        \sqrt{\ln( n)}
                    \right)^2}
                {32\|\tilde{u}\|_{B(\mathcal{X})}^2M^2 + \frac{8}{3} \|\tilde{u}\|_{B(\mathcal{X})} M \left(
                        \frac{\alpha}{\sqrt{n}}
                        \sqrt{\ln( n)}
                    \right)}
                \right\}
            \right)\\
    \geq&
        1
    -
        \sum_{n \geq N}
            2L_{\delta_n}
            \left(
                \exp\left\{
                    \frac
                    {-n
                    \left(
                        \frac{\alpha}{\sqrt{n}}
                        \sqrt{\ln( n)}
                    \right)^2
                    }
                    {32M^2 + \frac{8}{3}M\left(
                        \frac{\alpha}{\sqrt{n}}
                        \sqrt{\ln( n)}
                    \right)}
                \right\}
            +
                \exp\left\{
                    \frac{-n\left(
                        \frac{\alpha}{\sqrt{n}}
                        \sqrt{\ln( n)}
                    \right)^2}
                {32M^2/a^2 + \frac{8}{3} M\left(
                        \frac{\alpha}{\sqrt{n}}
                        \sqrt{\ln( n)}
                    \right)/a}
                \right\}
            \right)\\
    \geq&
        1
    -
        \sum_{n \geq N}
        4L_{\delta_n}
                \exp\left\{
                    \frac
                    {-n
                    \left(
                        \frac{\alpha}{\sqrt{n}}
                        \sqrt{\ln( n)}
                    \right)^2
                    }
                    {32M^2/a^2 + \frac{8}{3}M\left(
                        \frac{\alpha}{\sqrt{n}}
                        \sqrt{\ln( n)}
                    \right)/a}
                \right\}
    \end{align}
    Take $\gamma = 32M^2/a^2 + \frac{8}{3}M/a$. Since $\alpha \geq 1$,
    \begin{align}
        &\mathbb{P}
        \left(
            \bigcap_{n \geq N}
            \left\{
                \sup_{x \in \mathcal{X}}
                \left|
                    P_{n, \mu} \tilde{u}(x)
                -
                    P_{\mu} \tilde{u}(x)
                \right|
        \leq
            \frac{\alpha}{\sqrt{n}}
            \sqrt{\ln( n)}
        \right\}
        \right)\\
    \geq&
        1
    -
        \sum_{n \geq N}
        4L_{\delta_n}
                \exp\left\{
                    \frac
                    {-n
                    \left(
                        \frac{\alpha}{\sqrt{n}}
                        \sqrt{\ln( n)}
                    \right)^2
                    }
                    {32M^2/a^2 + \frac{8}{3}M\alpha/a}
                \right\}
    \geq
        1
        -
        \sum_{n \geq N}
            4L_{\delta_n}
                \exp\left\{
                    \frac
                    {-
                        \alpha
                        \ln( n)
                    }
                    {32M^2/\alpha a^2 + \frac{8}{3}M/a}
                \right\}\\
    \geq&
        1 - \sum_{n \geq N} 
            4L_{\delta_n}
            \exp
            \left\{
                -\frac{\alpha}{\gamma}
                \ln(n)
            \right\}
    =
        1 - \sum_{n \geq N} 
            4L_{\delta_n}
            \exp
            \left\{
                -
                \ln(n^\frac{\alpha}{\gamma})
            \right\}\\
    =&
        1 - \sum_{n \geq N} 
            4L_{\delta_n}
            \exp
            \left\{
                -
                \ln(n^\frac{\alpha}{\gamma})
            \right\}.
    \end{align}
    Inequality \eqref{eq:probNew} and \eqref{eq:prob2} are proved similarly.
    Considering $\delta_n>0$ such that $\tilde{\omega}(\delta_n) \leq \frac{a}{16M}\frac{\alpha}{\sqrt{n}}\sqrt{\ln(n)}$, inequality \eqref{eq:prob3} can be proved similarly using Corollary \ref{cor:CrossedTermEstimate}
    \begin{align}
        &\mathbb{P}
        \left(
            \bigcap_{n \geq N}
            \left\{
                \sup_{x, z \in \mathcal{X}}
                \left|
                    [T_{n, \mu} h_\mu(z,\cdot)](x)
                -
                    [T_{\mu} h_\mu(z,\cdot)](x)
                \right|
        \leq
            \frac{\alpha}{\sqrt{n}}
            \sqrt{\ln(n)}
            \right\}
        \right)\\
    \geq&
        1
        -
        \sum_{n \geq N} 
            4L_{\delta_n}^2
        \exp
        \left(
            \frac{-na^2\left(\frac{\alpha}{\sqrt{n}}
            \sqrt{\ln(n)}\right)^2}
            {32M^4/a^2 + \frac{8}{3}M^2\left(\frac{\alpha}{\sqrt{n}}
            \sqrt{\ln(n)}\right)}
        \right)\\
    \geq&
        1
        -
        \sum_{n \geq N} 
            4L_{\delta_n}^2
        \exp
        \left(
            \frac{-na^2\left(\frac{\alpha}{\sqrt{n}}
            \sqrt{\ln(n)}\right)^2}
            {32M^4/a^2 + \frac{8}{3}M^2\left(\frac{\alpha}{\sqrt{n}}
            \sqrt{\ln(n)}\right)}
        \right)\\
    \geq&
        1
        -
        \sum_{n \geq N} 
            4L_{\delta_n}^2
        \exp
        \left(
            \frac{-na^2\left(\frac{\alpha}{\sqrt{n}}
            \sqrt{\ln(n)}\right)^2}
            {32M^4/a^2 + \frac{8}{3}M^3\alpha/a}
        \right)\\
    \geq&
        1
        -
        \sum_{n \geq N} 
            4L_{\delta_n}^2
        \exp
        \left(
            \frac{-\alpha a^2\ln(n)}
            {M^2\gamma}
        \right)
    =
        1
    -
         \sum_{n \geq N} 
            4L_{\delta_n}^2
        \exp
        \left(
            -\ln({n^{\frac{\alpha a}{M^2\gamma}}})
        \right)\\
    =&
        1
        -
        \sum_{n \geq N} 
            4L_{\delta_n}^2\frac{1}{n^{\frac{\alpha a^2}{M^2\gamma}}}.
    \end{align}
    For inequality \eqref{eq:prob4}, choosing $\delta_a>0$ such that $\omega(\delta_a)\leq \frac{a}{16}$ applying equation \eqref{eq:simplifiedBound} from Lemma \ref{lem:boundOnIndivProbTerm} we obtain
    \begin{align}
    \mathbb{P}
        \left(
            \forall_{n \geq N}
            \left\{
                \sup_{x \in \mathcal{X}}
                \left|
                    P_{n, \mu} 1(x)
                -
                    P_{\mu} 1(x)
                \right|
        \leq
            \frac{a}{2}
            \right\}
        \right)
    \geq&
        1
    -
        \sum_{n \geq N}
        \mathbb{P}
        \left(
            \left\{
                \sup_{x \in \mathcal{X}}
                \left|
                    P_{n, \mu} 1(x)
                -
                    P_{\mu} 1(x)
                \right|
        >
            \frac{a}{2}
            \right\}
        \right)\\
    \geq&
        1
    -
        \sum_{n \geq N}
        4L_{\delta_a}
        \exp\left\{
            \frac{-n(a/2)^2}
            {32M^2 + \frac{8}{3}M(a/2)}
        \right\}.
    \end{align}
\end{proof}
\begin{lemma}
    \label{lem:auxilliaryLemmaCalc1}
    Let $ 1 < \sigma$, and $B > 0$ and $N \in \mathbb{N}$ with $N \geq 2$. Then we have
    \begin{equation}
        \sum_{n \geq N}
        \frac{1}{n^\sigma}
    \leq
        \frac{1}{(\sigma - 1)(N-1)^{\sigma - 1}}
    \end{equation}
    and
    \begin{equation}
        \sum_{n \geq N}
        \exp
        \left\{
            \frac{-n(a/2)^2}{32M^2 + \frac{8}{3}M(a/2)}
        \right\}
    \leq
        \frac{1}{C_e}
        \exp
        \left\{
            -(N-1)C_e
        \right\}
    \end{equation}
    where $C_e = \frac
            {(a/2)^2}
            {32M^2 + \frac{8}{3}M(a/2)}$.
\end{lemma}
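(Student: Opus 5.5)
Both inequalities are instances of the standard integral comparison for a decreasing positive function: if $\phi:[N-1,\infty)\to(0,\infty)$ is non-increasing, then for every $n\geq N$ we have $\phi(n)\leq\int_{n-1}^{n}\phi(t)\,dt$. Summing over $n\geq N$ gives
\begin{equation}
    \sum_{n\geq N}\phi(n)\leq\int_{N-1}^{\infty}\phi(t)\,dt .
\end{equation}
The plan is to apply this with two choices of $\phi$ and evaluate the resulting integrals in closed form. The assumption $N\geq 2$ is used only to ensure that $N-1\geq 1>0$. This keeps the integrand $t^{-\sigma}$ finite on the whole interval of integration, and it also makes the prefactor $(N-1)^{1-\sigma}$ well defined.

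For the first inequality, take $\phi(t)=t^{-\sigma}$, which is decreasing on $(0,\infty)$. Since $\sigma>1$, the improper integral converges:
\begin{equation}
    \int_{N-1}^\infty t^{-\sigma}\,dt=\frac{(N-1)^{1-\sigma}}{\sigma-1},
\end{equation}
which is exactly the claimed bound $\frac{1}{(\sigma-1)(N-1)^{\sigma-1}}$. The parameter $B$ in the statement plays no role.

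For the second inequality, first note that $C_e=\frac{(a/2)^2}{32M^2+\frac{8}{3}M(a/2)}$ is strictly positive because $a,M>0$. Take $\phi(t)=e^{-C_e t}$, which is decreasing. Then
\begin{equation}
    \sum_{n\geq N}e^{-C_e n}\leq\int_{N-1}^\infty e^{-C_e t}\,dt=\frac{1}{C_e}e^{-(N-1)C_e}.
\end{equation}
This matches the stated bound, since the summand $\exp\{-n(a/2)^2/(32M^2+\frac{8}{3}M(a/2))\}$ is precisely $e^{-C_e n}$.

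There is no real obstacle here. The only points to check are that each $\phi$ is monotone on $[N-1,\infty)$ and that the lower limit $N-1$ is positive, which is where $N\geq 2$ enters.
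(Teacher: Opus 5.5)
Your proof is correct and takes the same approach as the paper: each sum is bounded by $\int_{N-1}^\infty$ of the corresponding decreasing function ($t^{-\sigma}$, resp.\ $e^{-C_e t}$), and the integral is evaluated in closed form. Your write-up is also cleaner than the paper's, whose display for the exponential sum has typos (an $n^2$ in the exponent and a doubled minus sign) that your version avoids.
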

\begin{proof}

    We have
    \begin{align}
        \sum_{n\geq N}
        \frac{1}{n^\sigma}
    \leq&
        \int_{N-1}^\infty
            \frac{1}{x^\sigma}
            dx
    =
        \frac{1}{\sigma-1}
        \frac{1}{(N-1)^{\sigma-1}}.
    \end{align}
    Similarly
    \begin{align}
        \sum_{n \geq N}
        \exp\left\{
            \frac{-n^2(a/2)^2}{32M^2 + \frac{8}{3}M(a/2)}
        \right\}
    \leq&
        \int_{N-1}^\infty
        \exp
        \left\{
            \frac{-x(a/2)^2}{32M^2 + \frac{8}{3}M(a/2)}
        \right\}
        dx\\
    =&
        \frac{32M^2 + \frac{8}{3}M(a/2)}{(a/2)^2}
        \exp
        \left\{
            -\frac
            {-(N-1)(a/2)^2}
            {32M^2 + \frac{8}{3}M(a/2)}
        \right\}.
    \end{align} 
\end{proof}

\begin{proof}[Proof of Theorem \ref{thm:rateOfConvergenceTheorem}]
    We start by noticing that since $U_\mu' = I - T_\mu$, if $\lambda \neq 1$ is an eigenvalue of $U_\mu$  with eigenvector $u \in B(\mathcal{X})$ and $\sigma(U_\mu) \cap V = \{\lambda\}$, we have that the set $V_1 = \{1-x:x \in V\}$ satisfies $V_1 \cap \sigma(T_\mu) = \{1-\lambda\}$ and $u$ is an eigenvector of $T_\mu$ with eigenvalue $1-\lambda \neq 0$. If $\textup{Pr}_n$ is the spectral projection of $\sigma(U_{n,\mu}')\cap V$ and $\tilde{\text{Pr}}_n$ is the spectral projection of $\sigma(T_{n,\mu})\cap V_1$ then since $U_{n,\mu}' = I - \hat{T}_{n,\mu}$ we must have that $\text{Pr}_n = \tilde{\text{Pr}}_n$.

    We consider $\delta_a > 0$ such that $\omega(\delta_a) = \frac{a}{16}$. We can apply equation \eqref{eq:hypOnCoverAndContinuity} to conclude that
    \begin{equation}
        \delta_a
    \geq
    \left(
        \frac{\omega(\delta_a)}{C_\omega}
    \right)^\frac{1}{m'}
    =
    \left(
        \frac{a}{16C_\omega}
    \right)^\frac{1}{m'}.
    \end{equation}
    Thus we have
    \begin{equation}
        L_{\delta_a}
    \leq
        C_L
        \left(
            \left(
            \frac{a}{16C_\omega}
            \right)^\frac{1}{m'}
        \right)^{-m}
    =
        C_L
            \left(
            \frac{16C_\omega}{a}
            \right)^\frac{m}{m'}
    =
        :C_a.
    \end{equation}
    By Proposition \ref{prop:estimateOnProbabilityForRate} we obtain
    \begin{equation}
    \label{eq:probabilityOfPositiveKernel}
        \mathbb{P}
        \left(
            \bigcap_{n \geq N}
            \left\{
                \sup_{x \in \mathcal{X}}
                \left|
                    P_{n, \mu} 1(x)
                -
                    P_{\mu} 1(x)
                \right|
        \leq
            \frac{a}{2}
            \right\}
        \right)
    \geq
        1
    -
        \sum_{n \geq N}
        4C_a
        \exp\left\{
            \frac{-n(a/2)^2}
            {32M^2 + \frac{8}{3}M(a/2)}
        \right\}.
    \end{equation}

    Notice that by Lemma \ref{lem:estimatesForH} then $\tilde{\omega}(\delta) \leq \frac{a}{16M}\epsilon$ is equivalent to $\omega(\delta) \leq \frac{a}{8M}\left(\frac{1}{a}+\frac{2M}{a^2}\right)^{-1} \epsilon$. Let $C_\delta:= \min\{\frac{a}{8}, \frac{a}{8M}\left(\frac{1}{a}+\frac{2M}{a^2}\right)^{-1}\}=\frac{a}{8M}\left(\frac{1}{a}+\frac{2M}{a^2}\right)^{-1}$.
    Using continuity of $\omega(\cdot)$, choose $\delta_n$ such that
    \begin{equation}
        \omega(\delta_n) = C_\delta \frac{\alpha}{\sqrt{n}}\sqrt{\ln( n)} \leq 
        \min\left\{
            \frac{1}{8}\frac{\alpha}{\sqrt{n}}\ln( n), \frac{a}{8M}\left(\frac{1}{a}+\frac{2M}{a^2}\right)^{-1}
            \frac{\alpha}{\sqrt{n}}\sqrt{\ln( n)}
        \right\}. 
    \end{equation}
    We can use the hypothesis \eqref{eq:hypOnCoverAndContinuity} of the theorem to conclude
    \begin{align}
        \delta_n
    \geq
        \left(\frac{w(\delta_n)}{C_\omega}\right)^\frac{1}{m'}
    =
        \left(
            \frac{C_\delta}{C_\omega}
            \frac{\alpha}{\sqrt{n}}
            \ln(n)
        \right)^\frac{1}{m'}.
    \end{align}
    Again using equation \eqref{eq:hypOnCoverAndContinuity} we conclude
    \begin{equation}
        L_{\delta_n}
    \leq
        C_L\left(
            \frac{C_\delta}{C_\omega}
            \frac{\alpha}{\sqrt{n}}
            \ln(n)
        \right)^\frac{-m}{m'}
    =
        C_L
        \left(
            \frac{C_\omega}{C_\delta}
            \frac{\sqrt{n}}{\alpha\ln(n)}
        \right)^\frac{m}{m'}
    \leq
        \tilde{C}\frac{n^\frac{m}{2m'}}{\alpha^{\frac{m}{m'}}},
    \end{equation}
    where we take $\tilde{C}:=C_L\left(\frac{C_\omega}{C_\delta\ln(2)}\right)^\frac{m}{m'} = C_L\left(\frac{8MC_\omega}{a \ln(2) }\left(\frac{1}{a}+\frac{2M}{a} \right) \right)^\frac{m}{m'}$. Using $\frac{\alpha}{\tilde{\gamma}}
        -
        \frac{m}{2m'}
        >
        1$, we also have that $\alpha \geq 1$. Since $\omega(\delta_n) \leq \frac{a}{8}\frac{\alpha}{\sqrt{n}}\sqrt{\ln(n)}$, $\tilde{\omega}(\delta_n) \leq \frac{a}{16M}\frac{\alpha}{\sqrt{n}}\ln(n)$ we can now apply Proposition \ref{prop:estimateOnProbabilityForRate}, Lemma \ref{lem:auxilliaryLemmaCalc1} and equation \eqref{eq:probabilityOfPositiveKernel} to conclude that we have for all $n \geq N$
        \begin{align}
        \label{eq:aproximatingEq1}
        \sup_{x \in \mathcal{X}}
                \left|
                    P_{n, \mu} \tilde{u}(x)
                -
                    P_{\mu} \tilde{u}(x)
                \right|
        &\leq
            \frac{\alpha}{\sqrt{n}}
            \sqrt{\ln(n)},\\
        \label{eq:aproximatingEqNew}
                \sup_{x \in \mathcal{X}}
                \left|
                    P_{n, \mu} u(x)
                -
                    P_{\mu} u(x)
                \right|
        &\leq
            \frac{\alpha}{\sqrt{n}}
            \sqrt{\ln(n)}\\
        \label{eq:aproximatingEq2}
        \sup_{x \in \mathcal{X}}
                \left|
                    [P_{n, \mu} 1](x)
                -
                    [P_{\mu} 1](x)
                \right|
        &\leq
            \frac{\alpha}{\sqrt{n}}
            \sqrt{\ln(n)},\\
        \label{eq:aproximatingEq3}
        \sup_{x, z \in \mathcal{X}}
                \left|
                    [T_{n, \mu} h_{\mu}(z,\cdot)](x)
                -
                    [T_{\mu} h_{\mu}(z,\cdot)](x)
                \right|
        &\leq
            \frac{\alpha}{\sqrt{n}}
            \sqrt{\ln(n)},\\
        \label{eq:aproximatingEq4}
        \sup_{x \in \mathcal{X}}
                \left|
                    P_{n, \mu} 1(x)
                -
                    P_{\mu} 1(x)
                \right|
        &\leq
            \frac{a}{2},
    \end{align}
    is satisfied with probability larger than (using definition of $\tilde{\gamma}$ from Theorem \ref{thm:rateOfConvergenceTheorem} and $C_e$ from Lemma \ref{lem:auxilliaryLemmaCalc1}) 
    \begin{align}
        &
        1
    -
        \sum_{n \geq N}
        4L_{\delta_n} \frac{3}{n^{\frac{\alpha}{\gamma}}}
    -
            \sum_{n \geq N}
            4L_{\delta_n}^2
            \frac{1}{n^{\frac{\alpha a^2}{M^2\gamma}}}
    -
        \sum_{n \geq N}
        4C_a
        \exp\left\{
            \frac{-n^2(a/2)^2}
            {32M^2 + \frac{8}{3}Mn(a/2)}
            \right\}\\
    \geq&
        1
    -
        \sum_{n \geq N}
        4L_{\delta_n}^2 
        \frac{3}{n^{\frac{\alpha}{\gamma}}}
    -
            \sum_{n \geq N}
            2L_{\delta_n}^2
            \frac{1}{n^{\frac{\alpha a^2}{M^2\gamma}}}
    -
        \sum_{n \geq N}
        4C_a
        \exp\left\{
            \frac{-n(a/2)^2}
            {32M^2 + \frac{8}{3}M(a/2)}
            \right\}\\
    \geq&
        1
    -
        \sum_{n \geq N}
        12\left(\tilde{C}\frac{n^\frac{m}{2m'}}{\alpha^{\frac{m}{m'}}}\right)^2 \frac{1}{n^{\frac{\alpha}{\gamma}}}
    -
            \sum_{n \geq N} 
            4\left(\tilde{C}\frac{n^\frac{m}{2m'}}{\alpha^{\frac{m}{m'}}}\right)^2
            \frac{1}{n^{\frac{\alpha a^2}{M^2\gamma}}}
            -
        \sum_{n \geq N}
        4C_a
        \exp\left\{
            \frac{-n(a/2)^2}
            {32M^2 + \frac{8}{3}M(a/2)}
            \right\}\\
    \geq&
        1
        -
        \frac{\tilde{C}}
        {\alpha^{\frac{2m}{m'}}}
        \sum_{n\geq N}
        \left(  
            \frac
            {12}
            {n^{\frac{\alpha}{\gamma} - \frac{m}{m'}}}
        +
                \frac{4}
            {n^{\frac{\alpha}{M^2\gamma/a^2} - \frac{m}{m'}
            }
            }
        \right)
    -
        \sum_{n \geq N}
        4C_a
        \exp\left\{
            \frac{-n(a/2)^2}
            {32M^2 + \frac{8}{3}M(a/2)}
            \right\}\\
    \geq&
        1
        -
        \frac{\tilde{C}}
        {\alpha^{\frac{2m}{m'}}}
        \sum_{n\geq N}
            \frac{16}
            {n^{\frac{\alpha}{\tilde{\gamma}} - \frac{m}{m'}
            }
            }
    -
        \sum_{n \geq N}
        4C_a
        \exp\left\{
            \frac{-n(a/2)^2}
            {32M^2 + \frac{8}{3}M(a/2)}
            \right\}\\
    \geq&
        1
        -
        \frac{\tilde{C}}
        {\alpha^{\frac{2m}{m'}}}
        \sum_{n\geq N}
            \frac{16}
            {n^{\frac{\alpha}{\tilde{\gamma}} - \frac{m}{m'}
            }
            }
    -
        \sum_{n \geq N}
        4C_a
        \exp\left\{
            \frac{-n(a/2)^2}
            {32M^2 + \frac{8}{3}M(a/2)}
            \right\}\\
    \geq&
        1
        -\frac{16\tilde{C}}{\alpha^{\frac{2m}{m'}}}
        \frac{\tilde{\gamma}m'}
        {\alpha m' - \tilde{\gamma}m - \tilde{\gamma}m'}
        \frac{1}{(N-1)^{\frac{\alpha}{\tilde{\gamma}}-\frac{m}{2m'}}}
        +
        4\frac{C_a}{C_e}
        \exp
        \left\{
            -(N-1)C_e
        \right\}.
    \end{align}
    Finally we have that equation \eqref{eq:aproximatingEq4} implies that for all $n \geq N$ we have $\inf_{x \in \mathcal{X}} d_{n,\mu}(x) \geq \frac{a}{2}$, thus we can use equations \eqref{eq:aproximatingEq1}, \eqref{eq:aproximatingEqNew}, \eqref{eq:aproximatingEq2}, \eqref{eq:aproximatingEq3} and apply Proposition \ref{prop:estimateOnProjectionOfEigenvector} to conclude that 
    \begin{align}
        \|
            u - \text{Pr}_n u\|_{B(\mathcal{X})}
    &=
        \|u-\tilde{\text{Pr}}_nu\|_{B(\mathcal{X})}\\
        &\leq
        \overline{C}_{\lambda, T_\mu}
        \Bigg(
        \sup_{x \in \mathcal{X}}
        \left|
            P_{n, \mu}\tilde{u}(x)
        -
            P_{\mu}\tilde{u}(x)
        \right|
        +
        \sup_{x,z \in \mathcal{X}}
        \left|
            [
            T_{n, \mu} h_{\mu}(z,\cdot)](x)
        -
            [T_{\mu} h_{\mu}(z, \cdot)](x)
        \right|
        \\
        &\quad\quad\quad
            +
        \sup_{x \in \mathcal{X}}
        \left|
            [P_{n, \mu} 1](x)
        -
            [P_{\mu} 1](x)
        \right|
            +
            \sup_{x \in \mathcal{X}}
        \left|
            P_{n, \mu} u(x)
        -
            P_{\mu} u(x)
        \right|
        \Bigg)\\
    &\leq
        4\overline{C}_{\lambda, T_\mu}
        \frac{\alpha\sqrt{\ln(n)}}{\sqrt{n}}
    \end{align}
    finishing the proof.
\end{proof}

\begin{remark}
    Conditions \eqref{eq:hypOnCoverAndContinuity} are in general easily obtained and estimated. If we have a bounded set $\mathcal{X} \subset \mathbb{R}^N$ with the euclidean metric then in general we have a bound
    \begin{equation}
        L_\delta
    \leq
        C_L\frac{1}{\delta^N}.
    \end{equation}
    In fact if $\mathcal{X}$ is a compact submanifold of a given dimension $m$, then this bound can be improved to
    \begin{equation}
        L_\delta
    \leq
        C_L\frac{1}{\delta^m}.
    \end{equation}
    The bound on $w(\delta)$ however in general should not be better than linear on $\delta$. Indeed if our kernel is smooth, we are expecting a linear bound
    \begin{equation}
        \omega(\delta) \leq C_0\delta,
    \end{equation}
    where $C_0$ depends on the supremmum of the derivatives of $k$. Also if $k(x,y) = \chi_{B_r(x)}(y)$ then if $\mathcal{X}$ is a submanifold equipped with the Hausdorff measure, $\omega(\delta)$ as noted in remark \ref{rem:remarkAboutAnulli} this will correspond to the measure of an Annulli of width $\delta$ which is linear with $\delta$. In fact when $k$ is the characteristic function of the ball, the condition on $\omega$ is called the $m'$-annular decay property. In fact this property is studied in \cite{Buckley1999} and it is proved that length spaces that are doubling satisfy this property.
\end{remark}

\printbibliography

\end{document}